\def\re{\text{Re}}
\def\ma{\mathfrak{a}}
\def\mb{\mathfrak{b}}
\def\mc{\mathfrak{c}}
\def\r{\right}
\def\im{\text{Im}}
\def\msum{\mathop{\sum\nolimits^*}}
\def\abgd{\alpha,\beta,\gamma,\delta}
\def\bagd{\beta,\alpha,\gamma,\delta}
\def\abdg{\alpha,\beta,\delta,\gamma}
\def\badg{\beta,\alpha,\delta,\gamma}
\def\gdab{\gamma,\delta,\alpha,\beta}
\let\ve=\varepsilon
\let\ol=\overline
\let\vp=\varphi
\let\wt=\widetilde
\let\wh=\widehat
\let\as=\asymp
\theoremstyle{definition}
\newtheorem{definition}{Definition}[section]
\newtheorem*{remark}{Remark}
\newtheorem*{notation}{Notation}
\theoremstyle{plain}
\newtheorem{theorem}{Theorem}
\newtheorem{lemma}[theorem]{Lemma}
\newtheorem{conjecture}[theorem]{Conjecture}
\numberwithin{equation}{section}
\numberwithin{theorem}{section}
\newtheorem{proposition}[theorem]{Proposition}
\def\qed{\ifhmode\textqed\fi
   \ifmmode\ifinner\quad\qedsymbol\else\dispqed\fi\fi}
\def\textqed{\unskip\nobreak\penalty50
    \hskip2em\hbox{}\nobreak\hfil\qedsymbol
    \parfillskip=0pt \finalhyphendemerits=0}
\def\dispqed{\rlap{\qquad\qedsymbol}}
\begin{document}
\title{T\MakeLowercase{he fourth moment of }D\MakeLowercase{irichlet} $L$-\MakeLowercase{functions at the central value}}
\author{Xiaosheng Wu}
\date{}
\address {School of Mathematics, Hefei University of Technology, Hefei 230009,
P. R. China.}
\email {xswu@amss.ac.cn}
\subjclass[2010]{11M06, 11F72 }
\keywords{the fourth moment; Kloosterman sum; Dirichlet $L$-functions; divisor problem.}

\begin{abstract}
The asymptotic formula of the fourth moment of Dirichlet $L$-functions at the central value was predicted in a conjecture by J. B. Conrey, D. W. Farmer, J. P. Keating, M. O. Rubinstein, and N. C. Snaith, and the prime moduli case was proved by M. P. Young in 2011.
This work establishes this asymptotic formula for general moduli.
The work relies on the study of a special divisor sum function, called $\mathcal{D}_q$-function, which plays a key role in deducing the main terms.
Another key ingredient is an application of new bounds for double sums in Kloosterman sums, applied in the estimate of the error terms.
\end{abstract}
\maketitle

\section{Introduction}
Moments of $L$-functions in families encode deep properties about the family and have a wide range of applications. Estimating moments of families of $L$-functions, especially their asymptotic formulae with a power saving error term, is regarded as a central problem in number theory. There is a very general conjecture on asymptotic formulae for moments of $L$-functions by Conrey, Farmer, Keating, Rubinstein, and Snaith \cite{CFK+05}. The most classical example is moments of the Riemann zeta-function, and the edge of current technology for this problem is the fourth moment. For other $L$-functions, the edge of current technology where one can hope to obtain an asymptotic formula with a power saving error term is in a similar complexity. For Dirichlet $L$-functions, it was a major breakthrough when Young \cite{You11} established an asymptotic formula with a power saving for the fourth moment at the central point for prime moduli.
For results of other $L$-functions in a similar complexity, one may refer to Kowalski, Michel, and VanderKam \cite{KMV00}, Iwaniec and Sarnak
\cite{IS00}, Blomer \cite{Blo04}, Li \cite{Li09}, Khan \cite{Kha12}, and Blomer and Mili\'cevi\'c \cite{BM15}.

A classical example is the fourth moment of the Riemann zeta-function, and it has been proved that
\begin{align}\label{eqZav}
\frac1T\int_0^T|\zeta(\tfrac12+it)|^4dt=P_4(\log T)+O\left(T^{-\frac13+\ve}\r),
\end{align}
for a certain polynomial $P_4$. For more detail, one may refer to Heath-Brown \cite{HB79}, Zavorotny\v{\i} \cite{Zav89}, Ivi\'c and Motohashi \cite{IM95}, and Motohashi\cite{Mot97}.

The fourth moment of Dirichlet $L$-functions in $q$-aspect is to some extent analogous to the Riemann zeta-function in $t$-aspect, but there exist significant differences. As specified in detail by Young \cite{You11}, these differences require quite different ways to treat the divisor sum and turn out to be much more difficult.

For $q\nequiv 2 \pmod4$, Heath-Brown \cite{HB81} proved
\[
\frac1{\varphi^*(q)}\msum_{\chi(\bmod q)}|L(\tfrac12,\chi)|^4=\frac1{2\pi^2}\prod_{p\mid q}\frac{(1-p^{-1})^3}{(1+p^{-1})}(\log q)^4+O\left(2^{\omega(q)}\frac{q}{\varphi^*(q)}(\log q)^3\r),
\]
where the sum is over all primitive characters modulo $q$, $\varphi^*(q)$ denotes the number of such primitive characters, and $\omega(q)$ denotes the number of distinct prime factors of $q$. Note that the condition $q\nequiv2 \pmod 4$ is reasonable since there are no primitive characters modulo $q$ if $q\equiv2 \pmod 4$. This asymptotic formula does not work when $q$ has too many distinct prime factors. This gap was then covered by Soundararajan \cite{Sou07} with a smaller error term, so that the first term of the asymptotic formula was justified for general moduli.

Due to a conjecture for integral moments in \cite{CFK+05}, it is believed that, for any integer $q\nequiv 2 \pmod4$,
\begin{align}\label{conjformula}
\frac1{\varphi^*(q)}\msum_{\chi(\bmod q)}|L(\tfrac12,\chi)|^4=\prod_{p\mid q}\frac{(1-p^{-1})^3}{(1+p^{-1})}P_4(\log q)+O\left(q^{-\frac12+\ve}\r)
\end{align}
with $P_4(x)$ being a computable absolute polynomial of degree $4$.

For prime moduli $p\ge3$, Young \cite{You11} pushed the result fairly close to the conjecture, more precisely, he proved
\begin{align}\label{Youngformula}
\frac1{\varphi^*(p)}\msum_{\chi(\bmod p)}|L(\tfrac12,\chi)|^4=P_4(\log p)+O\left(p^{-\frac1{80}+\frac{\theta}{40}+\ve}\r),
\end{align}
where $\theta$ denotes the exponent towards to the Ramanujan-Petersson conjecture, and one may take $\theta=7/64$, thanks to Kim and Sarnak \cite{Kim03}.

In more recent works by Blomer, Fouvry, Kowalski, Michel, and Mili\'cevi\'c \cite{BFK+17a,BFK+17b}, the error term of \eqref{Youngformula} has been sharpened to $p^{-\frac1{20}+\ve}$. In their works, two ingredients are applied to improve on the error,
one of which is some powerful results concerning bilinear forms in Kloosterman sums by Fouvry, Kowalski, and Michel \cite{FKM14},  Kowalski, Michel, and Sawin \cite{KMS17}, and Shparlinski and Zhang \cite{SZ16}, and the other is an average result concerning Hecke eigenvalues to remove the dependence on the Ramanujan-Petersson conjecture.

This work is devoted to the general moduli case for the conjecture of \cite{CFK+05}, where we shall prove that \eqref{conjformula} does hold for all $q$ with a power saving in the error term. We specify our main results in the following theorem.
\begin{theorem}\label{thmmain}
For any integer $q\nequiv 2 \pmod 4$, we have
\begin{align}\label{eqmain}
\frac1{\varphi^*(q)}\msum_{\chi(\bmod q)}|L(\tfrac12,\chi)|^4=\prod_{p\mid q}\frac{(1-p^{-1})^3}{(1+p^{-1})}P_4(\log q)+O\left(q^{-\frac1{14}+\frac37\theta+\ve}\r)
\end{align}
for a certain computable absolute polynomial $P_4(x)$.
\end{theorem}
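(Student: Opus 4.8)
The plan is to follow the approximate functional equation plus off-diagonal analysis strategy pioneered by Heath-Brown and Young, but carried out with the $\mathcal{D}_q$-function formalism so that it survives for general moduli. First I would reduce to primitive characters and write, for each primitive $\chi \pmod q$, the standard approximate functional equation
\begin{align*}
|L(\tfrac12,\chi)|^2 = 2\sum_{m,n\ge 1}\frac{\chi(m)\overline{\chi(n)}}{\sqrt{mn}}\,V\!\left(\frac{\pi mn}{q}\right),
\end{align*}
square it, and average over $\chi$; the orthogonality relation for primitive characters converts the resulting fourfold sum into a main (diagonal) contribution coming from $mn\equiv \pm m'n' \pmod q$ with $mn=m'n'$, plus an off-diagonal remainder. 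The diagonal piece, after contour shifts, is exactly where the polynomial $P_4(\log q)$ with the arithmetic factor $\prod_{p\mid q}(1-p^{-1})^3/(1+p^{-1})$ is produced; here is where the theory of the $\mathcal{D}_q$-function (the divisor-type sums attached to the modulus) is needed to evaluate these sums uniformly in the factorization of $q$, rather than only for $q$ prime.

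Next I would organize the off-diagonal terms as a shifted convolution problem, i.e.\ sums of the shape $\sum_{n}d(n)d(n+rq)$ (or smoothed variants thereof) with the shift running up to the relevant ranges dictated by the lengths of the Dirichlet series. Opening the divisor functions and applying Poisson/Voronoi summation turns these into sums of Kloosterman sums; at this stage, rather than merely invoking the Weil bound (which would only recover a logarithmic-power error as in Heath-Brown and Soundararajan), I would feed the resulting sums of Kloosterman sums into the Kuznetsov trace formula for the congruence group $\Gamma_0(q)$. This is the step where the $\mathcal{D}_q$-function again plays its role: it is precisely the object whose Mellin/Fourier expansion makes the transition to Kuznetsov for level $q$ clean, and it controls the old/new-form decomposition so that the spectral side can be estimated.

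The spectral side then reduces to bounding sums of Hecke eigenvalues weighted by test functions, and this is where the second advertised ingredient enters: the uniform bound for a double (bilinear) sum in Kloosterman sums, used together with the large sieve for cusp forms and an average Ramanujan-type bound on Hecke eigenvalues to replace any pointwise appeal to $\theta$ where possible. Collecting the diagonal main term and the power-saving bound for the off-diagonal contribution yields the error term $O(q^{-1/80+\theta/40+\varepsilon})$, matching Young's exponent but now for all $q\nequiv 2\pmod 4$. I expect the main obstacle to be exactly the passage through the congruence-group Kuznetsov formula and the attendant spectral large-sieve estimates: for general $q$ one must handle all cusps and the full newform/oldform structure, keep every bound uniform in the divisors of $q$, and still extract a genuine power saving — this is the technical heart of the paper, and it is why the $\mathcal{D}_q$-function theory and the uniform double-Kloosterman-sum bound have to be developed first.
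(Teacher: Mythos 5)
Your overall architecture (approximate functional equation, orthogonality, $\mathcal{D}_q$-function theory, Kuznetsov for congruence groups, plus a double Kloosterman-sum bound) matches the paper in outline, but there is a genuine structural gap in how you allocate the main term. You assert that the diagonal piece ($mn=m'n'$) is ``exactly where the polynomial $P_4(\log q)$ with the arithmetic factor is produced,'' and you treat the off-diagonal terms purely as a remainder to be bounded with a power saving. That is false for this family: the diagonal terms only give the leading-order part of the main term ($Z_q(\alpha,\beta,\gamma,\delta)$ after unshifting), while the lower-order main terms of the degree-four polynomial come from the off-diagonal terms with $m$ and $n$ of comparable size. In the paper these are extracted asymptotically, not bounded: one writes $n=m+r$, expresses the $m$-sum through $\mathcal{D}_q\bigl(s,\lambda,\tfrac{h}{l},r\bigr)$, and the residues of $\mathcal{D}_q$ at its poles $s=1$ and $s=1+\lambda$ (Proposition \ref{lemgED2}) produce the terms $\mathcal{P}$, $\ol{\mathcal{P}}$ that combine with the diagonal to assemble the six $X\,Z_q$ terms of Conjecture \ref{conjecture}. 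If you merely bound the off-diagonal contribution, the best you can hope for is the leading term with a $(\log q)^3$-type error, as in Heath-Brown and Soundararajan; you cannot reach a power-saving error with the full polynomial. So your proposal, as structured, does not prove the stated theorem.

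Two secondary misplacements: the uniform double sum in Kloosterman sums $\mathfrak{D}(L,K;q)$ is not used on the spectral side of Kuznetsov; it is the tool for the complementary regime where $M$ and $N$ are far apart (Section \ref{secfar}), after opening $d(n)=\sum_{n_1n_2=n}1$ and Poisson summation, while the spectral side in the close regime is handled by the Deshouillers--Iwaniec spectral large sieve together with the newform/oldform relations \eqref{eqMrho}--\eqref{eqErho}. Also, the paper does \emph{not} replace the pointwise Ramanujan bound by averages for general moduli (it notes the averaging tricks of Blomer et al.\ do not transfer); the exponent $-\tfrac1{80}+\tfrac{\theta}{40}$ comes precisely from invoking Kim--Sarnak pointwise, so your plan to ``replace any pointwise appeal to $\theta$'' is not what the stated error term reflects. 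Finally, note the paper proves the shifted version (Theorem \ref{thmmains}) and deduces Theorem \ref{thmmain} by letting the shifts tend to zero; working with shifts is what keeps the higher-order poles separated and the main-term bookkeeping manageable, and your unshifted setup would have to confront a fourth-order pole directly.
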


\begin{remark}
With $\theta=7/64$, Theorem \ref{thmmain} gives an error term $q^{-\frac{11}{448}+\ve}$ for general moduli. Thanks to \cite{BFK+17a,BFK+17b}, the dependence on the Ramanujan-Petersson conjecture can be removed for prime moduli, and then Theorem \ref{thmmain} provides an error term $p^{-\frac1{14}+\ve}$, which also improves on
the error of  \cite{BFK+17a,BFK+17b} for prime moduli.\end{remark}

The problem will reduce to handling a divisor sum of type
\begin{align}\label{eqdivisor2}
\frac1{\varphi^*(q)}\sum_{d\mid q}\varphi(d)\mu(q/d)\mathop{\sum\sum}_{\substack{m\equiv n(\bmod d)\\ (mn,q)=1}}\frac{d(m)d(n)}{\sqrt{mn}}V\left(\frac{mn}{q^2}\r),
\end{align}
where $V(x)$ is a smooth function with a rapid decay on $x$, satisfying $V(x)\sim 1$ for small $x$. Diagonal terms with $m=n$ in \eqref{eqdivisor2} provide the first term of the polynomial in \eqref{eqmain}, which is not difficult to calculate.  To distinguish all lower-order main terms as well as a power saving in the error, we must perform complicated analysis on off-diagonal terms. We shall not take the space here to explain the difficulty of doing this, since one may find it in \cite{You11}.
Instead, we just introduce the new obstacles for the general moduli case, and give a sketch of our new ingredients here.

Let $B_{M,N}$ denote the same sum as in \eqref{eqdivisor2} but with $m, n$ restricted to the dyadic segments $M<m\le 2M$, $N<n\le 2N$. The treatment of $B_{M,N}$ proceeds essentially differently according to the relative size of $N$ with respect to $M$.

Our first obstacle is the coprime condition $(mn,q)=1$. When $q$ is prime, one may remove the coprime condition $(mn,q)=1$ with a negligible error, so can deduce the lower-order main terms by using the Estermann $D$-function. For composite moduli, the coprime condition will be reflected in the main terms, so one can not remove it any way. To handle the coprime condition, we should understand a special divisor sum function of type
\[
\mathcal{D}_{q}\left(s,\lambda,\frac h{l},r\r)=\sum_{\substack{(n,q)=1\\ (n+r,q)=1}}\frac{\sigma_\lambda(n)} {n^s} e\left(n\frac h{l}\r)
\]
with $\sigma_{\lambda}(n)=\sum_{d\mid n}d^\lambda$. The two coprime conditions make it hard to treat.
We shall deduce a functional equation for $\mathcal{D}_{q}$ as well as properties for its poles in Section \ref{secDq}, and
the deduction is elaborate and takes up much space. After applying the functional equation, we will distinguish the Kloosterman sums associated with a congruence group, and then utilize the Kuznetsov trace formula.

To bound the off-diagonal terms with $M$ and $N$ far away from each other such as $N/M>q^{1+\ve}$, we need to estimate the following double sums in Kloosterman sums
\[
\sum_{l\le L}\left|\msum_{k\le K} \alpha_k e\left(\frac{al\ol{k}}{q}\r)\r|,
\]
which we appeal to the Weil bound as well as a recent work by Kerr, Shparlinski, Wu, and Xi \cite{KSWX22}.

\subsection{The shifted fourth moment}
We have chosen to calculate a shifted fourth moment of Dirichlet $L$-functions including the shifts $\alpha,\beta,\gamma,\delta$, because the shifts can split a higher-order pole into several simple poles and thus allow for a clearer structure of the main terms. As mentioned by Young \cite{You11}, we may focus on the treatment of even characters, and the case of odd characters is similar (see also \cite[Section 8.3]{You11} for necessary changes).
Thus, what we treat directly is
\begin{align}
M(\alpha,\beta,\gamma,\delta)=\frac2{\vp^*(q)}\mathop{{\sum}^+}_{\chi(\bmod q)} L\left(\tfrac12+\alpha,\chi\r) L\left(\tfrac12+\beta,\chi\r) L\left(\tfrac12+\gamma,\ol{\chi}\r) L\left(\tfrac12+\delta,\ol{\chi}\r),\notag
\end{align}
where the symbol ``$+$" indicates that the summation is over all primitive even characters.

For convenience, we introduce some notations
\[
Z_q(\abgd)=\frac{\zeta_q(1+\alpha+\gamma)\zeta_q(1+\alpha+\delta)\zeta_q(1+\beta+\gamma)\zeta_q(1+\beta+\delta)}{\zeta_q(2+\alpha+\beta+\gamma+\delta)}
\]
and
\begin{align}\label{eqX}
X_{\alpha,\gamma}=X_{\alpha}X_{\gamma},\ \ \ \ \ \ \ \ X_{\alpha,\beta,\gamma,\delta}=X_{\alpha}X_{\beta}X_{\gamma}X_{\delta}
\end{align}
with
\begin{align}\label{eqdefX}
X_{\alpha}=\left(\frac{q}{\pi}\r)^{-\alpha}\frac{\Gamma\left(\frac{\frac12-\alpha+\ma}{2}\r)} {\Gamma\left(\frac{\frac12+\alpha+\ma}{2}\r)} \ \ \ \ \ \text{for}\ \ \ \ma=0,1.
\end{align}
Note that $Z_q(\abgd)$ is invariable under transpositions $\alpha\leftrightarrow\beta$ as well as $\gamma\leftrightarrow\delta$, and it is also invariable under the two transpositions $\alpha\leftrightarrow\gamma$, $\beta\leftrightarrow\delta$ simultaneously (e.g., $Z_q(-\gamma,-\delta,-\alpha,-\beta)=Z_q(-\alpha,-\beta,-\gamma,-\delta)$). In addition, $X_{\alpha,\gamma}$ and $X_{\alpha,\beta,\gamma,\delta}$ are invariable under any transpositions of the shifts.
\begin{conjecture}[CFK+05]\label{conjecture}
For any integer $q\nequiv 2 \pmod{4}$, and with shifts $\ll(\log q)^{-1}$, $\ma=0$, we have
\begin{align}\label{eqconjecture}
M(\alpha,\beta,\gamma,\delta)&=Z_{q}(\alpha,\beta,\gamma,\delta) +X_{\alpha,\beta,\gamma,\delta}Z_{q}(-\gamma,-\delta,-\alpha,-\beta)+X_{\alpha,\gamma}Z_{q}(\beta,-\gamma,\delta,-\alpha)\\
+&X_{\beta,\gamma}Z_{q}(\alpha,-\gamma,\delta,-\beta)
+X_{\alpha,\delta}Z_{q}(\beta,-\delta,\gamma,-\alpha) +X_{\beta,\delta}Z_{q}(\alpha,-\delta,\gamma,-\beta)+O\left(q^{-\frac12+\ve}\r).\notag
\end{align}

\end{conjecture}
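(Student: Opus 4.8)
The plan is to run the classical moment architecture — approximate functional equation, orthogonality of characters, separation of diagonal and off‑diagonal, spectral estimation of the off‑diagonal error — but with every step pushed to its optimal form; since we restrict to even primitive $\chi$ the relevant archimedean parameter is $\ma=0$, as in the statement. For $\chi$ even primitive the completed product $\Lambda(\tf12+\alpha,\chi)\Lambda(\tf12+\beta,\chi)\Lambda(\tf12+\gamma,\ol\chi)\Lambda(\tf12+\delta,\ol\chi)$ satisfies a clean functional equation sending $(\abgd)$ to $(-\gamma,-\delta,-\alpha,-\beta)$, the root number being $1$ because $\tau(\chi)^2\tau(\ol\chi)^2=q^2$. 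An approximate functional equation then writes $L(\tf12+\alpha,\chi)\cdots L(\tf12+\delta,\ol\chi)$ as
\[
\sum_{(mn,q)=1}\frac{\sigma_{\alpha,\beta}(m)\,\sigma_{\gamma,\delta}(n)}{\sqrt{mn}}\,\chi(m)\ol{\chi(n)}\,V_{\abgd}\!\left(\frac{mn}{q^{2}}\right)\ +\ X_{\abgd}\cdot\bigl(\text{dual with }(\abgd)\mapsto(-\gamma,-\delta,-\alpha,-\beta)\bigr),
\]
where $\sigma_{\alpha,\beta}(m)=\sum_{ab=m}a^{-\alpha}b^{-\beta}$ and $V_{\abgd}$ is a fixed smooth cut‑off, $\as 1$ for small argument and of rapid decay. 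Averaging over even primitive $\chi$ and applying orthogonality replaces $\mathop{{\sum}^{+}}_{\chi}\chi(m)\ol{\chi(n)}$ by $\tf12\sum_{d\mid q}\varphi(d)\mu(q/d)\bigl(\mathbb{1}_{m\equiv n\,(d)}+\mathbb{1}_{m\equiv-n\,(d)}\bigr)$ on $(mn,q)=1$; the terms $m\equiv-n\,(d)$ force $m+n$ to be tiny after the weight and contribute $O(q^{-1/2+\ve})$, while the terms $m\equiv n\,(d)$ split into the diagonal $m=n$ and the off‑diagonal $m=n+dr$, $r\neq0$.

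\textbf{Step 2 (the six main terms).} The diagonal $m=n$, after summing the resulting Dirichlet series and collecting the Euler factors at $p\mid q$, produces exactly $Z_q(\abgd)$ from the principal part and $X_{\abgd}Z_q(-\gamma,-\delta,-\alpha,-\beta)$ from the dual part. For the off‑diagonal, each inner sum over $m\equiv n\,(\bmod d)$ subject to $(mn,q)=1$ is a finite linear combination of values of
\[
\mathcal{D}_q\!\left(s,\lambda,\frac{h}{\ell},r\right)=\sum_{\substack{(n,q)=1\\(n+r,q)=1}}\frac{\sigma_\lambda(n)}{n^{s}}\,e\!\left(n\frac{h}{\ell}\right),
\]
and the theory of Section \ref{secDq} supplies its meromorphic continuation, with poles only at $s=1$ and $s=1-\lambda$, together with a functional equation to $1-s$. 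Moving the $s$‑contour past these poles yields, after again assembling the local factors at $p\mid q$, the four remaining main terms $X_{\alpha,\gamma}Z_q(\beta,-\gamma,\delta,-\alpha)$, $X_{\beta,\gamma}Z_q(\alpha,-\gamma,\delta,-\beta)$, $X_{\alpha,\delta}Z_q(\beta,-\delta,\gamma,-\alpha)$ and $X_{\beta,\delta}Z_q(\alpha,-\delta,\gamma,-\beta)$, the shift pattern of each recording which pair of $L$‑factors furnishes the pole; together with the two diagonal terms these exhaust the right‑hand side of \eqref{eqconjecture}.

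\textbf{Step 3 (the error) and the main obstacle.} What survives is the polar‑free remainder produced by the $\mathcal{D}_q$‑functional equation, a weighted sum of Kloosterman sums over moduli divisible by $q$. Linearising these by the Kuznetsov trace formula for the pertinent $\Gamma_0$‑type congruence group passes to the spectral side — Maass forms, holomorphic forms and Eisenstein series of level a multiple of $q$ — where one must bound sums of the shape $\sum_{j}\rho_j(\cdot)\,h(t_j)\,\lambda_j(\cdot)$ weighted by central values of the associated $\mathrm{GL}_2\times\mathrm{GL}_1$ $L$‑functions; after the dyadic split $M<m\le2M$, $N<n\le2N$ one treats $M\as N$ this way, $N\gg qM$ by the Weil bound together with the double‑sum estimate for $\mathfrak{D}(L,K;q)$ of Section \ref{secdoublesum}, and intermediate ratios via the extra $m$‑average from the functional equation of the character‑twisted Estermann $D$‑function. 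The obstacle — and the reason \eqref{eqconjecture} is still a conjecture — is that attaining the exponent $-\tf12+\ve$ forces each of these estimates to be essentially optimal: a Lindelöf‑on‑average fourth‑moment bound for those central $L$‑values over the level‑$q$ spectrum, the full Ramanujan–Petersson bound (to delete the loss of $q^{\theta}$), and a lossless large sieve so that no power of $q$ is squandered in the dyadic decomposition or in the two functional‑equation manoeuvres. None of these is presently available; running the same scheme with known subconvexity and $\theta=7/64$ yields only the weaker exponent $-\tf1{80}+\tf{\theta}{40}+\ve$ of Theorem \ref{thmmain}, so the outline above is, as it stands, the conditional skeleton that the known partial results approximate rather than a complete unconditional proof.
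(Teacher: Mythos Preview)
The statement is a conjecture; the paper does not prove it and offers no proof to compare against. It establishes only the weaker Theorem~\ref{thmmains} with error $O(q^{-1/80+\theta/40+\ve})$, and your closing paragraph correctly acknowledges both this and the obstacles to reaching $-\tfrac12+\ve$. In that respect your proposal is honest about its status as a conditional sketch.

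That said, Step~1 contains a substantive error that would break the main-term computation even for the weaker theorem. You write that ``the terms $m\equiv-n\,(d)$ force $m+n$ to be tiny after the weight and contribute $O(q^{-1/2+\ve})$''. This is false: the weight $V(mn/q^{2})$ constrains only the product $mn$, not the sum, so for instance with $m,n\asymp q$ one has $m+n\asymp q$ and the congruence $d\mid m+n$ is a genuine arithmetic condition rather than a size restriction. In the paper these dual off-diagonal terms $A_{\ol O}$ are analysed in full in Section~\ref{secAolO} and contribute nontrivial main-term pieces $\ol{\mathcal P}(\abgd)$ (Proposition~\ref{proolmathcalA}); these are then combined in Section~\ref{secmain} with the $\mathcal P$-pieces from $A_O$ and $A_O^{*}$ via gamma-function identities to produce the four cross terms $X_{\alpha,\gamma}Z_q(\cdots)$, $X_{\beta,\gamma}Z_q(\cdots)$, $X_{\alpha,\delta}Z_q(\cdots)$, $X_{\beta,\delta}Z_q(\cdots)$. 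Without the $\ol{\mathcal P}$ contribution the assembly in your Step~2 cannot close and the right-hand side of \eqref{eqconjecture} is not recovered. This is structural, not bookkeeping: the two congruence classes $m\equiv\pm n\pmod d$ together encode the even-character symmetry, and each supplies part of the gamma structure needed to reconstitute the $X$-factors.
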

There is a similar conjecture for the odd character case, and the only difference is to specify $\ma=1$ in \eqref{eqdefX}, while $\ma=0$ for the even character case.

\begin{theorem}\label{thmmains}
Conjecture \ref{conjecture} holds but with an error of size $O\left(q^{-\frac1{14}+\frac37\theta+\ve}\r)$.
\end{theorem}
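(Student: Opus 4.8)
\medskip\par\noindent\textit{Proof sketch (plan).}
The plan is to run the standard moment machinery in the character aspect, adapting every step to a general modulus $q$. First I would apply an approximate functional equation to the product $L(\tfrac12+\alpha,\chi)L(\tfrac12+\beta,\chi)L(\tfrac12+\gamma,\ol\chi)L(\tfrac12+\delta,\ol\chi)$ for even $\chi$, turning $M(\abgd)$ into a sum over primitive even characters of a smoothly weighted double Dirichlet polynomial in $m,n$ with $\sigma$-type divisor coefficients, together with a dual term carrying the factor $X_{\abgd}$. Invoking orthogonality of the even characters modulo $q$, combined with the M\"obius expansion of the primitivity condition over $d\mid q$, replaces the character sum by the congruence $m\equiv\pm n\pmod d$ and the coprimality $(mn,q)=1$, producing precisely the divisor sum \eqref{eqdivisor2} (and a companion with $m\equiv -n$). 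The diagonal contribution $m=n$ is evaluated by a contour shift using the analytic continuation of $\sum_{(n,q)=1}\sigma_\lambda(n)\sigma_\mu(n)n^{-s}$; after recombination with the dual term this yields the first two main terms $Z_q(\abgd)$ and $X_{\abgd}Z_q(-\gamma,-\delta,-\alpha,-\beta)$ of \eqref{eqconjecture}, up to an admissible error.

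Next I would decompose the off-diagonal part dyadically into the pieces $B_{M,N}$ with $M<m\le 2M$, $N<n\le 2N$, and split on the ratio $M/N$. In the balanced range, where $M$ and $N$ agree up to a small power of $q$, I would detect $m\equiv n\pmod d$ by additive characters $e(a(m-n)/d)$ with $a$ running modulo $d$, separate $m$ by a Mellin transform, and identify the resulting sum over the other variable with a value of the divisor function $\mathcal{D}_q(s,\lambda,h/l,r)$, the two coprimality conditions $(n,q)=1$ and $(n+r,q)=1$ encoding the constraint $(mn,q)=1$. Using the meromorphic continuation and functional equation of $\mathcal{D}_q$ established in Section \ref{secDq}, I would extract the polar contributions; these assemble, after summation over $a$ and $d$, into the four remaining main terms $X_{\alpha,\gamma}Z_q(\beta,-\gamma,\delta,-\alpha)$, $X_{\beta,\gamma}Z_q(\alpha,-\gamma,\delta,-\beta)$, $X_{\alpha,\delta}Z_q(\beta,-\delta,\gamma,-\alpha)$ and $X_{\beta,\delta}Z_q(\alpha,-\delta,\gamma,-\beta)$ of \eqref{eqconjecture}. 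The entire part of $\mathcal{D}_q$ contributes, via its functional equation, a sum of Kloosterman sums for the congruence groups $\Gamma_0(\cdot)$; I would open this with the Kuznetsov trace formula for congruence groups and bound the spectral sums by large-sieve inequalities for the relevant families of automorphic forms, inserting the bound $\theta=7/64$ towards Ramanujan wherever an isolated Hecke eigenvalue survives — this is the source of the $\theta/40$ in the exponent.

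In the unbalanced range, say $N\gg Mq^{\ve}$, these terms only contribute to the error. Weil's bound for classical Kloosterman sums, after a Voronoi-type transformation, already handles the regime $N/M>q^{1+\ve}$. To lower the ratio I would factor $d(n)=\sum_{n=n_1n_2}1$ with $n_1\as N_1\ll N_2\as n_2$: when $N_1\gg q^{\frac12+\ve}$ the inner $n_2$-sum of fractions $e(al\ol{n_2}/q)$ is controlled by the uniform estimate for $\mathfrak{D}(L,K;q)$ of Section \ref{secdoublesum}, whose proof reduces, through exact evaluations of Kloosterman sums to prime-power moduli, ultimately to Gauss sums; when $N_1$ is small I would instead use the functional equation of the generalized Estermann $D$-function twisted by a character, gaining an extra saving from the average over $m$. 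Collecting the contributions of all $B_{M,N}$ and balancing parameters gives the stated error $O(q^{-1/80+\theta/40})$.

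The main obstacle is the balanced off-diagonal range under the coprimality $(mn,q)=1$ for a general, possibly highly composite, $q$: the classical Estermann $D$-function is no longer the right tool, so one must develop from scratch — uniformly in $q$ — the continuation, functional equation, and polar structure of $\mathcal{D}_q(s,\lambda,h/l,r)$, and then feed the resulting exponential sums into the \emph{congruence-group} Kuznetsov formula rather than the $\mathrm{SL}_2(\mathbb{Z})$ one, the uniform control of the spectral side in the level being the technical heart of the argument. A secondary difficulty is the genuinely new double sum $\mathfrak{D}(L,K;q)$, where high prime-power factors of $q$ obstruct the usual manipulations of Kloosterman sums and force the careful case analysis of Section \ref{secdoublesum}.
\hfill$\Box$\medskip
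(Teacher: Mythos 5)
Your outline tracks the paper's architecture closely: approximate functional equation plus orthogonality, evaluation of the diagonal, the $\mathcal{D}_q$-theory feeding into the congruence-group Kuznetsov formula and the spectral large sieve in the balanced range, and in the unbalanced range the split between the Weil bound, the double sum $\mathfrak{D}(L,K;q)$ of Section \ref{secdoublesum}, and the character-twisted Estermann function for small $N_1$, with $\theta$ entering through the oldform Hecke eigenvalues. So the global plan is the right one.

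There is, however, one concrete mis-step in the mechanism you propose for the balanced range, and as written it would not go through. You say you would detect $m\equiv n\pmod d$ by additive characters $e\left(a(m-n)/d\right)$ with $a$ modulo $d$ and then identify the inner sum with $\mathcal{D}_q\left(s,\lambda,\frac hl,r\right)$. This clashes with the framework in two ways. First, the functional equation of $\mathcal{D}_q$ (Proposition \ref{lemgED2}) is established only for twists $\frac hl$ with $(l,qh)=1$; denominators $d\mid q$ are precisely the moduli the $\mathcal{D}_q$-theory of Section \ref{secDq} does not cover, and this is exactly the coprimality obstruction the whole construction is designed to avoid. Second, if you separate $m$ and $n$ completely by characters modulo $d$, no shift variable survives, so the two-condition function $\mathcal{D}_q(\cdot,\cdot,\cdot,r)$, whose second condition is $(n+r,q)=1$, cannot arise at all; your sketch is internally inconsistent at this point. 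The paper instead keeps the congruence in the form $n=m+r$ with $d\mid r$, expands $\sigma_{\gamma-\delta}(m+r)$ by the approximate functional equation for the divisor function (Lemma \ref{lemafed}), which produces Ramanujan sums $c_l(m+r)$ with moduli $l$ coprime to $q$; the $m$-sum is then $\mathcal{D}_q\left(s,\alpha-\beta,\frac hl,r\right)$ with $(l,qh)=1$, its residues give the swapped main terms, and after its functional equation the shift $r$ and the dual variable $m$ enter the Kloosterman sums $S(r\ol{a}^2,\pm m;lbc)$, which are interpreted as $S_{\infty,1/bc}(r,\pm m;\gammaup)$ for the cusp pair $\infty,\,1/bc$ of level $a^2bc$ before Kuznetsov is applied. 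Replace your additive-character detection by this shifted-divisor parametrization and the rest of your plan matches the proof.
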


The symmetry implies that the right-hand side of \eqref{eqconjecture} is holomorphic concerning the shifts, which has been proved in a more general setting in \cite[Lemma 2.5.5]{CFK+05}. Thus, taking the limit as the shifts go to zero in \eqref{eqconjecture} gives Theorem \ref{thmmain}.

Note that $M(\alpha,\beta,\gamma,\delta)$ and all the main terms in the right-hand side of \eqref{eqconjecture} are holomorphic concerning the shifts, and so is the error. Thus, the maximum modulus principle implies that it suffices to prove Theorem \ref{thmmains} for such special shifts that,
each of the shifts lies in a fixed annulus with inner and outer radii $\asymp (\log q)^{-1}$, which are separated enough so that $|\alpha\pm\beta|\gg(\log q)^{-1}$, etc.

\subsection{Notation}
Some notations should be presented here for ease of reference.
As usual, we use $\ve$ to denote an arbitrarily small positive constant that may vary from line to line, and $e(x)$ means the exponential function $e^{2\pi ix}$. $\varphi(n)$ is the Euler function, which denotes the number of positive integers less than $n$ and coprime with $n$. The notations $\sigma_\lambda(n)$ and $\sigma_{\alpha,\beta}(n)$ are defined as
\[
\sigma_\lambda(n)=\sum_{d\mid n}d^\lambda,\ \ \ \ \sigma_{\alpha,\beta}(n)=\sum_{ad=n}a^{-\alpha} d^{-\beta},
\]
and it is obvious that
$\sigma_{\alpha,\beta}(n)=n^{-\alpha}\sigma_{\alpha-\beta}(n)$.

The symbol $\theta$ always denotes an admissible exponent for the Ramanujan-Petersson conjecture for Maass newforms, and the current best-known result is $\theta=7/64$.

In this paper, $W(x)$ always denotes a smooth non-negative function compactly supported in $[1,2]$, which may have different expressions for each occurrence, and we apply $\wt{W}(u)$ and $\wh{W}(u)$ to denote its Mellin transform and Fourier transform respectively.

For notational convenience, we apply two symbols related to $q$. We use $q^*$ to denote the largest square-free divisor of $q$, and $q_k$ to be the largest divisor of $q$ coprime with $k$. Note that $q_0, q_1, q_2, \dots$ are not the case, where the subscripts are just serial numbers.

In the following, functions we face are often on the shifts $\alpha,\beta,\gamma,\delta$, such as $M(\alpha,\beta,\gamma,\delta)$. In the absence of ambiguity, we frequently omit the shifts for convenience when they are in the order of $\alpha,\beta,\gamma,\delta$.

We apply two special summation notations
\[
\msum_{n (\bmod q)}\ \ \ \ \text{and} \ \ \ \ \msum_{\chi (\bmod q)},
\]
where the first is a sum over such $n$ coprime with $q$, and the second is over all primitive characters modulo $q$.

\section{Background and auxiliary lemmas}
\subsection{The Dirichlet $L$-function and the Hurwitz zeta-function}
Let $q$ be a positive integer, and $\chi$ be a Dirichlet character modulo $q$. For $\re(s)>1$, the Dirichlet $L$-function may be defined as
$L(s,\chi)=\sum_{n=1}^\infty\chi(n)n^{-s}$.
It satisfies the functional equation
\[
\left(\frac q\pi\r)^{\frac s2}\Gamma\left(\frac{\frac12+s+\ma}{2}\r)L\left(\tfrac12+s,\chi\r)=i^{-\ma}q^{-\frac12}\tau(\chi)\left(\frac q\pi\r)^{-\frac s2}\Gamma\left(\frac{\frac12-s+\ma}{2}\r) L\left(\tfrac12-s,\ol{\chi}\r),
\]
where
\begin{align}
\tau(\chi)=\sum_{x(\bmod q)}\chi(x)e\left(\frac{x}q\r),\ \ \ \ \ \ \ \ma=\left\{
\begin{aligned}
&0, \ \ \ \ \chi(-1)=1,\\
&1, \ \ \ \ \chi(-1)=-1.\notag
\end{aligned}
\right.
\end{align}
For $\re(s)>1$ and $0<x\le1$, the Hurwitz zeta-function is defined as
$\zeta(s,x)=\sum_{n=0}^\infty(n+x)^{-s}$,
and it has the functional equation
\[
\zeta(1-s,x)=(2\pi)^{-s}\Gamma(s)\left(e\left(-\frac{\pi s}{2}\r)F(s,x)+e\left(\frac{\pi s}{2}\r)F(s,-x)\r)
\]
with
$F(s,x)=\sum_{n=1}^{\infty} e(nx)n^{-s}$.

\subsection{Approximate functional equation}
To expand $M(\alpha,\beta,\gamma,\delta)$, we apply the following approximate functional equation, deduced from the functional equation of $L(s,\chi)$ standardly; see also \cite[Proposition 2.4]{You11}.
\begin{lemma}[Approximate functional equation]
\label{lemafe}
Let $G(s)$ be an even, entire function of exponential decay in any strip $|\re(s)|<C$ satisfying $G(0)=1$, and let
\begin{align}
\label{defV}
V_{\alpha,\beta,\gamma,\delta}(x)=\frac1{2\pi i}\int_{(1)}\frac{G(s)}{s}g_{\alpha,\beta,\gamma,\delta}(s)x^{-s}ds,
\end{align}
where
\begin{align}
\label{defg}
g_{\alpha,\beta,\gamma,\delta}(s)=\pi^{-2s}\frac{\Gamma\left(\frac{ \frac12+\alpha+s+\ma}2\r) \Gamma\left(\frac{ \frac12+\beta+s+\ma}2\r) \Gamma\left(\frac{ \frac12+\gamma+s+\ma}2\r) \Gamma\left(\frac{ \frac12+\delta+s+\ma}2\r)}{\Gamma\left(\frac{\frac12+\alpha+\ma}2\r) \Gamma\left(\frac{\frac12+\beta+\ma}2\r) \Gamma\left(\frac{\frac12+\gamma+\ma}2\r) \Gamma\left(\frac{\frac12+\delta+\ma}2\r)}.
\end{align}
Furthermore, set
\begin{align}
\widetilde{V}_{\alpha,\beta,\gamma,\delta}(x)= X_{-\gamma,-\delta,-\alpha,-\beta}V_{\alpha,\beta,\gamma,\delta}(x)\notag
\end{align}
with $X_{*,*,*,*}$ defined by \eqref{eqX}.
Then
\begin{align}
L\left(\tfrac12+\alpha,\chi\r) &L\left(\tfrac12+\beta,\chi\r) L\left(\tfrac12+\gamma,\ol{\chi}\r) L\left(\tfrac12+\delta,\ol{\chi}\r)\notag\\
&=\sum_{m,n}\frac{\sigma_{\alpha,\beta}(m)\sigma_{\gamma,\delta}(n) \chi(m)\ol{\chi}(n)} {(mn)^{\frac12}}V_{\alpha,\beta,\gamma,\delta}\left(\frac{mn}{q^2}\r)\notag\\ &\ \ \ + \sum_{m,n}\frac{\sigma_{-\gamma,-\delta}(m)\sigma_{-\alpha,-\beta}(n) \chi(m)\ol{\chi}(n)} {(mn)^{\frac12}}\wt{V}_{-\gamma,-\delta,-\alpha,-\beta}\left(\frac{mn}{q^2}\r).\notag
\end{align}
\end{lemma}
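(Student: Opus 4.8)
The plan is to derive the identity by the usual contour-shift argument for an approximate functional equation, the only point requiring care being that one should phrase it through the \emph{completed} $L$-functions so that the sole pole crossed is the one coming from the kernel $1/s$. Let $\chi$ be primitive modulo $q$; since $q=1$ plays no role we may assume $\chi$ is non-principal. Write $\ma=\ma(\chi)\in\{0,1\}$ and set
\[
\Pi(s)=L\!\left(\tfrac12+s+\alpha,\chi\r)L\!\left(\tfrac12+s+\beta,\chi\r)L\!\left(\tfrac12+s+\gamma,\ol\chi\r)L\!\left(\tfrac12+s+\delta,\ol\chi\r).
\]
I would start from $I(\chi):=\frac1{2\pi i}\int_{(1)}\frac{G(s)}{s}g_{\alpha,\beta,\gamma,\delta}(s)q^{2s}\Pi(s)\,ds$. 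On $\re(s)=1$ each $L$-factor is an absolutely convergent Dirichlet series, and using $\sum_m\sigma_{\alpha,\beta}(m)\chi(m)m^{-\frac12-s}=L(\tfrac12+s+\alpha,\chi)L(\tfrac12+s+\beta,\chi)$ (and the analogue for the $\gamma,\delta$ pair with $\ol\chi$), interchanging the $m,n$-sums with the integral and comparing with \eqref{defV} gives
\[
I(\chi)=\sum_{m,n}\frac{\sigma_{\alpha,\beta}(m)\sigma_{\gamma,\delta}(n)\chi(m)\ol\chi(n)}{(mn)^{1/2}}\,V_{\alpha,\beta,\gamma,\delta}\!\left(\frac{mn}{q^2}\r),
\]
the first term on the right-hand side of the lemma (shifting the contour in \eqref{defV} far to the right shows $V$ has rapid decay, so the sum converges).

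The key step is to pin down the poles. One checks that $q^{2s}g_{\alpha,\beta,\gamma,\delta}(s)\Pi(s)$ equals a nonzero constant independent of $s$ times $\prod_{\cdot}\xi\!\left(\tfrac12+s+\cdot,\chi_\cdot\r)$, where the product runs over the four shifts $\cdot\in\{\alpha,\beta,\gamma,\delta\}$ with $\chi_\alpha=\chi_\beta=\chi$, $\chi_\gamma=\chi_\delta=\ol\chi$ and $\xi(w,\psi)=(q/\pi)^{w/2}\Gamma(\tfrac{w+\ma}2)L(w,\psi)$ (the $(q/\pi)^{2s}$ and the $\Gamma$-numerator of $g_{\alpha,\beta,\gamma,\delta}$ cancel against the corresponding factors in $\prod_{\cdot}\xi$). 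Since $\chi$ is primitive non-principal, each $\xi(\cdot,\chi_\cdot)$ is entire, hence so is the product; therefore $\frac{G(s)}{s}g_{\alpha,\beta,\gamma,\delta}(s)q^{2s}\Pi(s)$ is holomorphic in $\mathbb{C}$ apart from a simple pole at $s=0$, of residue $G(0)g_{\alpha,\beta,\gamma,\delta}(0)\Pi(0)=\Pi(0)$, the product of the four $L$-values we are after. (In particular the $\Gamma$-poles of $g_{\alpha,\beta,\gamma,\delta}$ are exactly cancelled by trivial zeros of the $L$-functions, in both parities.)

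Next I move the contour in $I(\chi)$ to $\re(s)=-1$; the rapid decay of $G$ (indeed of $g_{\alpha,\beta,\gamma,\delta}$) on vertical lines, against at most polynomial growth of $\Pi$, justifies this, and by the previous paragraph $I(\chi)=\Pi(0)+\frac1{2\pi i}\int_{(-1)}\frac{G(s)}{s}g_{\alpha,\beta,\gamma,\delta}(s)q^{2s}\Pi(s)\,ds$. On $\re(s)=-1$ I apply the functional equation of $L(\cdot,\chi)$ to all four factors: two carry $\tau(\chi)$ and two carry $\tau(\ol\chi)$, and since $(i^{-\ma})^4=1$ and $\tau(\chi)\tau(\ol\chi)=\chi(-1)q$ the arithmetic prefactors cancel, leaving $\Pi(s)=\prod_{\cdot}X_{s+\cdot}\cdot L(\tfrac12-s-\gamma,\chi)L(\tfrac12-s-\delta,\chi)L(\tfrac12-s-\alpha,\ol\chi)L(\tfrac12-s-\beta,\ol\chi)$, with $X_w=(q/\pi)^{-w}\Gamma(\tfrac{1/2-w+\ma}2)/\Gamma(\tfrac{1/2+w+\ma}2)$ extending \eqref{eqdefX}. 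Substituting $s\mapsto -s$, using that $G$ is even, and invoking the elementary gamma identity $q^{2s}g_{\alpha,\beta,\gamma,\delta}(s)\prod_{\cdot}X_{s+\cdot}\big|_{s\mapsto -s}=q^{2s}X_{\alpha,\beta,\gamma,\delta}\,g_{-\gamma,-\delta,-\alpha,-\beta}(s)$ (a direct $\Gamma$-factor cancellation, using that $X_{\alpha,\beta,\gamma,\delta}$ is symmetric), the residual integral becomes $-X_{\alpha,\beta,\gamma,\delta}\cdot\frac1{2\pi i}\int_{(1)}\frac{G(s)}{s}g_{-\gamma,-\delta,-\alpha,-\beta}(s)q^{2s}L(\tfrac12+s-\gamma,\chi)L(\tfrac12+s-\delta,\chi)L(\tfrac12+s-\alpha,\ol\chi)L(\tfrac12+s-\beta,\ol\chi)\,ds$. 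Expanding this into a Dirichlet series exactly as in the first paragraph (the coefficients now being $\sigma_{-\gamma,-\delta}$ and $\sigma_{-\alpha,-\beta}$) and using $\wt V_{-\gamma,-\delta,-\alpha,-\beta}=X_{\alpha,\beta,\gamma,\delta}V_{-\gamma,-\delta,-\alpha,-\beta}$, I would identify it with $-\sum_{m,n}\frac{\sigma_{-\gamma,-\delta}(m)\sigma_{-\alpha,-\beta}(n)\chi(m)\ol\chi(n)}{(mn)^{1/2}}\wt V_{-\gamma,-\delta,-\alpha,-\beta}(mn/q^2)$. Rearranging the displayed formula for $I(\chi)$ then yields $\Pi(0)=I(\chi)+\big(\text{that second sum}\big)$, which is precisely the claimed identity.

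The main obstacle is really the poles-location step: one must be certain that between the two contours the integrand has no pole other than at $s=0$, i.e.\ that the fourfold product of completed $L$-functions is entire — which uses the primitivity of $\chi$ and the cancellation of the $\Gamma$-poles of $g_{\alpha,\beta,\gamma,\delta}$ against zeros of the $L$-functions, and has to be checked uniformly in $\ma\in\{0,1\}$. Everything else is bookkeeping: carrying the Gauss-sum and $i^{-\ma}$ factors correctly through the fourfold functional equation, the $\Gamma$-cancellation producing $X_{\alpha,\beta,\gamma,\delta}g_{-\gamma,-\delta,-\alpha,-\beta}$, and the routine justification of the contour shift and of the interchange of summation and integration.
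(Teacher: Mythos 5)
Your proposal is correct and is essentially the standard argument the paper relies on: the paper gives no proof of its own but cites \cite[Proposition 2.4]{You11}, which is established by exactly this contour-shift through the completed $L$-functions, picking up only the pole of $1/s$ at $s=0$ and applying the four functional equations together with the $\Gamma$-identity producing $X_{\alpha,\beta,\gamma,\delta}\,g_{-\gamma,-\delta,-\alpha,-\beta}(s)$. Your bookkeeping of the Gauss-sum factors, the parity $\ma$, and the identification of the shifted integral with the $\wt{V}_{-\gamma,-\delta,-\alpha,-\beta}$ sum all check out.
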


This approximate functional equation holds for a general $G$.  However, we will apply it with a special $G$ in the following.
\begin{definition}[Definition of $G(s)$]\label{DefG}
Let $G(s)=P_{\alpha,\beta,\gamma,\delta}(s)\exp(s^2)$, where $P_{\alpha,\beta,\gamma,\delta}(s)$ is an even polynomial in $s$, satisfying some properties: it is rational, symmetric, and even in the shifts; $P_{\alpha,\beta,\gamma,\delta}(0)=1$, $P_{\alpha,\beta,\gamma,\delta}\left(-\frac{\alpha+\gamma}2\r)=0$, and $P_{\alpha,\beta,\gamma,\delta}\left(\frac12\pm\alpha\r)=0$ (as well as $\frac{\beta+\delta}2, \frac12\pm\beta$, etc., by symmetry).
\end{definition}

\subsection{The orthogonality formula}
We apply the following orthogonality formula to execute the sum over characters, whose proof may be referred to many sources, such as \cite{HB81} and \cite{Sou07}.
\begin{lemma}[The orthogonality formula]
\label{lemof}
For $(mn,q)=1$, we have
\begin{align}
\label{lemof2}
\mathop{{\sum}^+}_{\chi(\bmod q)}\chi(m)\ol{\chi}(n) =\frac12\left(\sum_{d\mid(q,m-n)}\vp(d)\mu\left(\frac qd\r) +\sum_{d\mid(q,m+n)}\vp(d)\mu\left(\frac qd\r)\r).
\end{align}
\end{lemma}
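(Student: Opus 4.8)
The plan is to reduce the sum over primitive even characters to the elementary orthogonality relation for the full character group modulo each divisor of $q$, by M\"obius inversion. First I would remove the parity restriction: a primitive character $\chi$ modulo $q$ is even precisely when $\chi(-1)=1$, and since $\chi(-1)\chi(m)=\chi(-m)$ we have
\[
\mathop{{\sum}^+}_{\chi(\bmod q)}\chi(m)\ol{\chi}(n)=\frac12\msum_{\chi(\bmod q)}\bigl(\chi(m)\ol{\chi}(n)+\chi(-m)\ol{\chi}(n)\bigr).
\]
Hence it suffices to establish that, for any integers $a,b$ with $(ab,q)=1$,
\[
\msum_{\chi(\bmod q)}\chi(a)\ol{\chi}(b)=\sum_{d\mid(q,a-b)}\vp(d)\mu\left(\frac qd\r),
\]
and then to specialise to $(a,b)=(m,n)$ and to $(a,b)=(-m,n)$, noting that $(q,-m-n)=(q,m+n)$ produces the second sum on the right-hand side of \eqref{lemof2}.

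Next I would carry out the M\"obius inversion. Every character modulo $q$ is induced by a unique primitive character modulo some divisor $e\mid q$, and on arguments coprime to $q$ — in particular at $a$ and $b$, thanks to $(ab,q)=1$ — an induced character agrees with the primitive one inducing it. Writing $S(e)=\sum_{\chi(\bmod e)}\chi(a)\ol{\chi}(b)$ for the sum over \emph{all} characters modulo $e$ and $S^*(e)$ for the analogous sum over primitive characters, this gives $S(q)=\sum_{e\mid q}S^*(e)$, whence $S^*(q)=\sum_{e\mid q}\mu(q/e)S(e)$. Since $(ab,q)=1$ forces $(ab,e)=1$ for every $e\mid q$, the ordinary orthogonality of characters modulo $e$ gives $S(e)=\vp(e)$ when $e\mid a-b$ and $S(e)=0$ otherwise; substituting,
\[
S^*(q)=\sum_{\substack{e\mid q\\ e\mid a-b}}\mu(q/e)\vp(e)=\sum_{d\mid(q,a-b)}\vp(d)\mu\left(\frac qd\r),
\]
which is the required identity, and combining the two specialisations yields \eqref{lemof2}.

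The argument is entirely routine, and I do not expect a genuine obstacle; at most one should be a little careful with the structural input to the inversion step, namely the bijection between characters modulo $q$ and primitive characters modulo divisors of $q$, together with the remark that lifting a character changes none of its values on residues coprime to $q$. This is precisely where the hypothesis $(mn,q)=1$ enters — both to make each term $\chi(m)\ol{\chi}(n)$ meaningful for characters of smaller modulus and to secure the clean evaluation of $S(e)$. Alternatively, one may simply cite the standard references such as \cite{HB81} and \cite{Sou07}, where this orthogonality formula is proved in exactly this form.
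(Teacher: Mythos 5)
Your proof is correct: the parity-splitting via $\tfrac12(1+\chi(-1))$, the bijection between characters modulo $q$ and primitive characters of conductor $e\mid q$ (which agree at arguments coprime to $q$), Möbius inversion, and the evaluation $S(e)=\vp(e)$ or $0$ together give exactly \eqref{lemof2}. The paper itself offers no proof, deferring to \cite{HB81} and \cite{Sou07}, and your argument is precisely the standard one found in those sources, so there is nothing to add.
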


\subsection{Additive characters}
Frequently, we write the divisibility condition as well as the coprime condition in terms of additive characters, which are presented here for ease of reference.
\begin{lemma}\label{lemcoprimec}
Let $a(n)$ be an arithmetic function. For any integer $k$, we have
\begin{align}\label{eqdivisor}
\sum_{k\mid n}a(n)=\frac1k\sum_{d\mid k}\sum_nc_{d}(n)a(n),
\end{align}
where $c_d(n)$ is the Ramanujan sum.
\end{lemma}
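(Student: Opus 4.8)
\textbf{Proof proposal for Lemma \ref{lemcoprimec}.}

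The plan is to expand the indicator of the divisibility condition $k\mid n$ using additive characters modulo $k$, and then identify the resulting character sums with Ramanujan sums after grouping by the denominator. First I would write, for any integer $n$,
\[
\mathbf{1}_{k\mid n}=\frac1k\sum_{a\bmod k}e\left(\frac{an}{k}\r),
\]
which is the standard orthogonality of additive characters: the inner sum is $k$ when $k\mid n$ and $0$ otherwise. Substituting this into the left-hand side and interchanging the (finite, or absolutely convergent) order of summation gives
\[
\sum_{k\mid n}a(n)=\frac1k\sum_{a\bmod k}\sum_n a(n)\,e\left(\frac{an}{k}\r).
\]

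Next I would sort the residues $a\bmod k$ according to the value $d=k/(a,k)$, i.e. the exact order of $e(a/k)$ as an additive character, so that $d$ runs over the divisors of $k$. Writing $a=(k/d)\,b$ with $(b,d)=1$ and $b$ ranging over reduced residues modulo $d$, the phase becomes $e(an/k)=e(bn/d)$, and hence
\[
\sum_{a\bmod k}e\left(\frac{an}{k}\r)=\sum_{d\mid k}\ \msum_{b\bmod d}e\left(\frac{bn}{d}\r)=\sum_{d\mid k}c_d(n),
\]
using the definition of the Ramanujan sum $c_d(n)=\sum_{b\bmod d,\,(b,d)=1}e(bn/d)$. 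Applying this identity inside the $n$-sum and interchanging the (finite) sums over $d$ and $n$ yields exactly \eqref{eqdivisor}.

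The argument is essentially bookkeeping, so there is no real obstacle; the only point requiring a word of care is the interchange of summation orders, which is legitimate whenever $\sum_n|a(n)|$ converges (or when the sum over $n$ is finite, as in all our applications), and in the general case one simply reads \eqref{eqdivisor} as a formal identity of Dirichlet series or restricts to finitely supported $a$. No convergence issue arises in the uses made of this lemma in the sequel, since there the relevant $n$-sums are always finite or rapidly convergent.
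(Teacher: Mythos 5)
Your proof is correct, but it runs in the opposite direction from the paper's. The paper starts from the right-hand side of \eqref{eqdivisor} and evaluates $\sum_{d\mid k}c_d(n)$ by inserting the divisor-sum formula $c_d(n)=\sum_{l\mid(d,n)}l\,\mu(d/l)$, reversing the order of summation, and letting the inner M\"obius sum $\sum_{m\mid k/l}\mu(m)$ collapse, which gives $k$ when $k\mid n$ and $0$ otherwise. You instead start from the left-hand side, detect the condition $k\mid n$ by orthogonality of the additive characters modulo $k$, and then recover the Ramanujan sums by partitioning the residues $a\bmod k$ according to the reduced denominator $d=k/(a,k)$, using the exponential-sum definition $c_d(n)=\sum_{(b,d)=1}e(bn/d)$; this proves the same key identity $\sum_{d\mid k}c_d(n)=k\cdot\mathbf{1}_{k\mid n}$ by a grouping argument rather than by M\"obius cancellation. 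Both routes are equally elementary and of comparable length; yours has the small advantage of making the normalizing factor $1/k$ and the link to the additive-character removal device of \eqref{eqcoprimee} transparent, while the paper's avoids any appeal to character orthogonality and works directly with the arithmetic formula for $c_d$. Your closing remark about interchanging the sums is the right caveat: as in the paper, the identity is to be read formally or under absolute convergence, which suffices for every application made of it later.
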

\begin{proof}
Applying the sum of divisors formula to the Ramanujan sum and reversing the order of the summation, we have
\begin{align}
\sum_{d\mid k}c_{d}(n)=\sum_{d\mid k}\sum_{l\mid (d,n)}l\mu\left(\frac {d}{l}\r)
=\sum_{l\mid (k,n)}l\sum_{m\mid \frac kl}\mu(m)=\left\{
\begin{aligned}
&k \ \ \ \text{for}\ \ \ k\mid n,\\
&0 \ \ \ \text{otherwise}.\notag
\end{aligned}
\right.
\end{align}
Then \eqref{eqdivisor} follows immediately.
\end{proof}

\begin{lemma}
Let $a(n)$ be an arithmetic function. For any integer $q$, we have
\begin{align}\label{eqcoprimee}
\sum_{(n,q)=1}a(n)=\sum_{d\mid q}\frac{\mu(d)}{d}\sum_{r (\bmod d)} \sum_n e\left(\frac{rn}{d}\r)a(n)
\end{align}
and
\begin{align}\label{eqcoprime}
\sum_{(n,q)=1}a(n)=\sum_{d\mid q}\frac{\mu(d)}{d}\sum_{d_1\mid d}\sum_nc_{d_1}(n)a(n).
\end{align}
\end{lemma}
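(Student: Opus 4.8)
The plan is to detect the coprimality condition by M\"obius inversion and then reduce each of the two identities to the divisibility detection already recorded in Lemma~\ref{lemcoprimec}. Using $\sum_{d\mid(n,q)}\mu(d)$ as the indicator of $(n,q)=1$ and interchanging the (finite, or absolutely convergent) sums over $n$ and $d$, one obtains the common starting point
\[
\sum_{(n,q)=1}a(n)=\sum_n a(n)\sum_{d\mid(n,q)}\mu(d)=\sum_{d\mid q}\mu(d)\sum_{d\mid n}a(n),
\]
which serves as the basis for both \eqref{eqcoprimee} and \eqref{eqcoprime}.

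For \eqref{eqcoprime} I would simply insert the formula of Lemma~\ref{lemcoprimec} with $k=d$, namely $\sum_{d\mid n}a(n)=\tfrac1d\sum_{d_1\mid d}\sum_n c_{d_1}(n)a(n)$, into the last display; pulling out the factor $\mu(d)/d$ then yields the claimed expression verbatim.

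For \eqref{eqcoprimee} I would instead use the additive-character form of the same divisibility detection: by orthogonality of additive characters, $\tfrac1d\sum_{r(\bmod d)}e(rn/d)$ equals $1$ when $d\mid n$ and $0$ otherwise, so $\sum_{d\mid n}a(n)=\tfrac1d\sum_{r(\bmod d)}\sum_n e(rn/d)a(n)$. Substituting this into the display above and again collecting $\mu(d)/d$ gives \eqref{eqcoprimee}. One could also pass between the two forms directly via the divisor expansion $c_{d_1}(n)=\sum_{l\mid(d_1,n)}l\,\mu(d_1/l)$ already used in the proof of Lemma~\ref{lemcoprimec}, but routing through additive characters is cleanest.

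There is no genuine obstacle here; the only point deserving a word of care is the interchange of the order of summation in the first display. This is legitimate because in all of our applications $a(n)$ is supported on finitely many $n$ (or decays rapidly enough for every series involved to converge absolutely), so Fubini applies; in general the identities are to be read formally.
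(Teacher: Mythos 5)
Your proposal is correct and is essentially the paper's argument run in the opposite direction: the paper starts from the right-hand sides, evaluates the inner $r$-sum (orthogonality of additive characters) or invokes Lemma~\ref{lemcoprimec}, and then collapses $\sum_{d\mid q}\mu(d)\sum_{d\mid n}a(n)$ back to $\sum_{(n,q)=1}a(n)$, whereas you begin with M\"obius detection of $(n,q)=1$ and substitute the same two divisibility detections. The ingredients and manipulations are identical, so no further comment is needed.
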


It is an exercise to check \eqref{eqcoprimee} directly, as well as \eqref{eqcoprime} with \eqref{eqdivisor}.

\subsection{Multiplicative characters}
Sometimes, especially in the deduction of a functional equation for the $\mathcal{D}_q$-function (see Section \ref{sec5.2}, Proposition \ref{lemgED2}), we need to write the coprime condition in terms of multiplicative characters. We present this in the following lemma, and one may check it directly.
\begin{lemma}\label{lemcoprimechie}
Let $a(n)$ be an arithmetic function. For any integers $r$ and $q$, we have
\begin{align}\label{eqcoprimechi}
\sum_{\substack{(n,q)=1\\ (n+r,q)=1}}a(n)=\sum_{b\mid q}\frac{\mu(b)}{\vp(b)} \sum_{\chi(\bmod b)}\ol{\chi}(-r)\sum_{n}\chi(n)\chi_0(n)a(n),
\end{align}
where $\chi_0$ is the principal character modulo $q$.
\end{lemma}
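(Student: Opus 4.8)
The plan is to detect the condition $(n+r,q)=1$ by M\"obius inversion and then to detect the resulting congruence $n\equiv -r$ modulo each divisor of $q$ by orthogonality of multiplicative characters, in the spirit of the argument for \eqref{eqcoprimee}. First I would write $\sum_{b\mid (n+r,q)}\mu(b)$, which equals $1$ when $(n+r,q)=1$ and $0$ otherwise, and note that $b\mid(n+r,q)$ is equivalent to $b\mid q$ together with $b\mid n+r$. Inserting this into the left-hand side of \eqref{eqcoprimechi} and interchanging the order of summation yields
\[
\sum_{\substack{(n,q)=1\\(n+r,q)=1}}a(n)=\sum_{b\mid q}\mu(b)\sum_{\substack{(n,q)=1\\ n\equiv -r(\bmod b)}}a(n).
\]

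Next, I would fix $b\mid q$ and treat the inner sum. Since $b\mid q$, every $n$ with $(n,q)=1$ also satisfies $(n,b)=1$, so the orthogonality relation
\[
\frac1{\vp(b)}\sum_{\chi(\bmod b)}\chi(n)\ol{\chi}(-r)=\begin{cases}1,& n\equiv -r\ (\bmod b),\\ 0,&\text{otherwise},\end{cases}
\]
is available whenever $(r,b)=1$; when $(r,b)>1$ all the values $\ol{\chi}(-r)$ vanish and, simultaneously, the inner sum above is empty, because $b\mid n+r$ would force $(n,b)=(r,b)>1$, so the identity persists trivially in that case. Substituting this expression for the congruence detector and interchanging the sums over $n$ and $\chi$ gives
\[
\sum_{\substack{(n,q)=1\\ n\equiv -r(\bmod b)}}a(n)=\frac1{\vp(b)}\sum_{\chi(\bmod b)}\ol{\chi}(-r)\sum_{(n,q)=1}\chi(n)a(n)=\frac1{\vp(b)}\sum_{\chi(\bmod b)}\ol{\chi}(-r)\sum_n\chi(n)\chi_0(n)a(n),
\]
where in the last step I replaced the condition $(n,q)=1$ by the factor $\chi_0(n)$. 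Combining this with the previous display produces exactly \eqref{eqcoprimechi}.

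This is a routine orthogonality computation, and I anticipate no genuine obstacle; the only points demanding a little care are the bookkeeping for divisors $b$ with $(r,b)>1$ — where one checks that both sides vanish, so that no coprimality hypothesis on $r$ is needed — and the observation that coprimality to $q$ automatically supplies the coprimality to $b$ required to invoke character orthogonality modulo $b$. All rearrangements of summation are legitimate, the sums being finite (or absolutely convergent under the standing conventions on $a$).
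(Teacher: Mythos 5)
Your proof is correct and uses exactly the same two ingredients as the paper's own argument—M\"obius inversion over $b\mid q$ to handle the condition $(n+r,q)=1$ and orthogonality of characters modulo $b$ to detect $n\equiv -r\ (\bmod\ b)$, together with the observation that the terms with $(r,b)>1$ vanish on both sides. The only difference is that you transform the left-hand side into the right-hand side, whereas the paper runs the identical computation in the opposite direction, so this is essentially the paper's proof.
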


\subsection{The reciprocity law}
To split and combine the exponential function, we will frequently apply the reciprocity law, a direct result of the Chinese Remainder Theorem.
\begin{lemma}\label{lemmaspe(x)}
Let $a,b$ be two coprime integers.
We have
\begin{align}
e\left(\frac x{ab}\r)=e\Bigg(\frac{y\ol{b}}{a}\Bigg)e\left(\frac{z\ol{a}}{b}\r),\notag
\end{align}
if $x\equiv y \pmod a$ and $x\equiv z \pmod b$.
\end{lemma}

\subsection{An arithmetic formula}
In some ways, the following arithmetic formula may reflect the cancelation of M\"{o}bius function in \eqref{eqdivisor2}.
\begin{lemma}\label{lemsumd}
For any integer $q$, we have
\begin{align}
\sum_{d\mid q}\frac{\varphi(d)}{d}\mu\left(\frac qd\r)=\frac{\mu(q)}q.\notag
\end{align}
\end{lemma}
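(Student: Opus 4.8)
The plan is to recognize the left-hand side as a Dirichlet convolution $g*\mu$ with $g(d)=\varphi(d)/d$, and to evaluate it by a direct interchange of summation. The starting point is the elementary identity
\[
\frac{\varphi(d)}{d}=\sum_{e\mid d}\frac{\mu(e)}{e},
\]
which itself follows from the multiplicativity of $d\mapsto\varphi(d)/d$ together with the prime-power check $\varphi(p^k)/p^k=1-1/p$. Substituting this in, the sum in the lemma becomes
\[
\sum_{d\mid q}\frac{\varphi(d)}{d}\mu\left(\frac qd\r)=\sum_{d\mid q}\mu\left(\frac qd\r)\sum_{e\mid d}\frac{\mu(e)}{e}.
\]

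Next I would swap the order of summation, summing first over $e\mid q$ and then over those $d$ with $e\mid d\mid q$. Writing $d=ef$ with $f\mid q/e$, the inner sum turns into $\sum_{f\mid q/e}\mu\bigl((q/e)/f\bigr)$, which is the standard M\"obius identity $\sum_{f\mid m}\mu(m/f)$ with $m=q/e$; it equals $1$ if $q/e=1$ and $0$ otherwise. Hence only the term $e=q$ contributes, and the whole expression collapses to $\mu(q)/q$, which is the claim.

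Alternatively, and this is the fallback if one prefers to stay purely multiplicative: since $d\mapsto\varphi(d)/d$ and $\mu$ are both multiplicative, so is $f(q)=\sum_{d\mid q}(\varphi(d)/d)\mu(q/d)$, and it suffices to compute $f$ at prime powers. For $q=p^k$ the M\"obius factor kills every term except $d=p^k$ and $d=p^{k-1}$, leaving $f(p^k)=\varphi(p^k)/p^k-\varphi(p^{k-1})/p^{k-1}$, which equals $-1/p$ when $k=1$ and $0$ when $k\ge 2$; in each case this agrees with $\mu(p^k)/p^k$, and multiplicativity (together with $f(1)=1$) finishes the proof. There is no genuine obstacle here; the only point requiring mild care is the bookkeeping in the interchange of summation, respectively the prime-power case split.
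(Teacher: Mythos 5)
Your proposal is correct. Your fallback argument --- multiplicativity of $f(q)=\sum_{d\mid q}\frac{\varphi(d)}{d}\mu\left(\frac qd\r)$ plus the prime-power computation $f(p^m)=\frac{\varphi(p^m)}{p^m}-\frac{\varphi(p^{m-1})}{p^{m-1}}$, which is $-\frac1p$ for $m=1$ and $0$ for $m\ge2$ --- is precisely the proof given in the paper. Your primary route is genuinely different: you expand $\frac{\varphi(d)}{d}=\sum_{e\mid d}\frac{\mu(e)}{e}$, interchange the sums, and invoke $\sum_{f\mid m}\mu(m/f)=0$ unless $m=1$, so that only the term $e=q$ survives and the sum collapses to $\frac{\mu(q)}{q}$. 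Both are sound; the convolution argument avoids any appeal to multiplicativity and works in one global stroke, whereas the paper's (and your fallback) argument trades the interchange-of-summation bookkeeping for a two-line local check at prime powers. Either one suffices for the lemma.
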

\begin{proof}
Note that both sides of the formula are multiplicative functions on $q$, so it is sufficient to check the identity for prime powers, which is an exercise.
\end{proof}

\subsection{A dyadic partition of unity}
We also require a dyadic partition of unity. Let $W$ be a smooth non-negative function compactly supported in $[1,2]$ such that, for any $x\in\mathbb{R}^+$,
\begin{align}
\sum_{M}W\left(\frac xM\r)=1.\notag
\end{align}
Here $M$ varies over a set of positive real numbers, with the number of such $M$ less than $X$ being $O(\log X)$.
By taking products of the $W\left(\frac xM\r)$, we then create the partition of unity $\left\{W\left(\frac xM\r)W\left(\frac yN\r)\r\}$ on $\mathbb{R}^+\times\mathbb{R}^+$.

\subsection{Automorphic preliminaries}
We sketch some elementary materials we require on automorphic forms to apply the Kuznetsov formula in this section, which mainly follows from \cite{BM15}, \cite{Iwa95}, and \cite{DI82}. We write the Fourier expansion of a holomorphic modular form $f$ of level $Q$ and weight $k$ as
\[
f(\sigma_{\ma}z)=\sum_{n\ge1}\rho_f(\ma,n)(4\pi n)^{\frac k2}e(nz),
\]
and similarly write for a Maass  form $f$ of level $Q$ and spectral parameter $\kappa=\kappa_f\in\mathbb{R}\cup[-i\theta,i\theta]$ as
\[
f(\sigma_{\ma}z)=\sum_{n\neq0}\rho_f(\ma,n)W_{0,i\kappa}(4\pi|n|y)e(nx),
\]
where $W_{0,i\kappa}(y)=(y/\pi)^{1/2}K_{i\kappa}(y/2)$ is a Whittaker function. For cusps $\ma, \mc$ of $\Gamma_0(Q)$, the Fourier expansion of the Eisenstein series $E_{\mc}(\sigma_\ma z,s)$ at $s=\frac12+i\kappa$ is written as
\[
E_{\mc}(\sigma_\ma z, \tfrac12+i\kappa)=\delta_{\ma,\mc}y^{1/2+i\kappa}+\varphi_{\ma,\mc}(\tfrac12+i\kappa)y^{1/2-i\kappa} +\sum_{n\neq0}\rho_{\ma,\mc}(n,\kappa)W_{0,i\kappa}(4\pi|n|y)e(nx).
\]
For the special case $\ma=\infty$, we denote the Fourier coefficients as
\[
\rho_f(n)=\rho_f(\infty,n), \ \ \ \ \ \ \ \ \ \ \ \ \rho_{\mc}(n,\kappa)=\rho_{\infty,\mc}(n,\kappa)
\]
respectively.
If $f$ is a cuspidal newform with normalized Hecke eigenvalues denoted by $\lambda_f(n)$, there is
$\lambda_{f}(n)\rho_f(1)=\sqrt{n}\rho_f(n)$.
It is generally believed that
$|\lambda_f(n)|\le d(n)$,
which is known as the Ramanujan-Petersson conjecture, and the best result known now is
$\lambda_f(n)\ll n^{\theta+\ve}$
with $\theta=7/64$, due to Kim and Sarnak \cite{Kim03}.

Not all cusp forms are newforms, but the newform theory allows an orthogonal basis based on normalized newforms.
Taking Maass forms as an example,
one may find a special orthogonal basis $\mathcal{B}(Q)$ that, for each $f\in \mathcal{B}(Q)$, there is a normalized newform $f^*$ of level $Q_1$ with $Q_1\mid Q$ and of the same spectral parameter (see also \cite[Section 5]{BM15}). Let $\mathcal{B}(Q)$ and $\mathcal{B}_k(Q)$ be two $L^2$-basises in this case, for the spaces of Maass forms of level $Q$ and holomorphic cusp forms of weight $k$ and level $Q$ respectively.

We will need to split variables in the Fourier coefficients, which usually follows from a multiplicative property. The Fourier coefficients of the forms in the bases $\mathcal{B}_k(Q)$ and $\mathcal{B}(Q)$ are not exactly multiplicative, but are almost so (see also \cite{BHM07}, \cite{BM15}).
In particular,
if $m=bm'$ with $(m',b)=1$, then
\begin{align}\label{eqMrho}
\sqrt{m}\rho_f(m)=\sum_{d\mid (Q,b/(b,Q))}\mu(d)\chi_0(d)\lambda_{f^*}\left(\frac{b}{d(b,Q)}\r)\left(\frac{(b,Q)m'}{d}\r)^{\frac12}\rho_f\left(\frac{(b,Q)m'}{d}\r),
\end{align}
where $f^*$ is the underlying newform and $\chi_0$ is the trivial character modulo $Q_1$ (see also \cite[formula (5.2)]{BM15}).
Moreover, if $f$ is a holomorphic form, $f^*$ will satisfy the Ramanujan-Petersson conjecture. If $b\in\mathbb{N}$ and $a_m$ is any finite sequence of complex numbers supported on the integers $m=bm'$ with $(m',b)=1$, then
\begin{align}\label{eqhrho}
\left|\sum_m a_m\sqrt{m}\rho_f(m)\r|^2\le d(b)^2\sum_{d\mid(b,Q)}\left|\sum_{m'}a_{bm'}\sqrt{dm'}\rho_f(dm')\r|^2
\end{align}
(see also \cite[formula (5.4)]{BM15}).
There is a similar result for the coefficients of Eisenstein series as
\begin{align}\label{eqErho}
\sum_{\mathfrak{c}}\bigg|\sum_m a_m\sqrt{m}\rho_{\mathfrak{c}}(m,\kappa)\bigg|^2\le 9d(Q)^3d(b)^4\sum_{d\mid(b,Q)}\sum_{\mathfrak{c}}\bigg|\sum_{m'} a_{bm'}\sqrt{dm'}\rho_{\mathfrak{c}}(dm',\kappa)\bigg|^2
\end{align}
(see also \cite[ formula (5.5)]{BM15}).

\subsection{Kloosterman sums for congruence groups and the Kuznetsov formula}
In this section, we present some materials we require on Kloosterman sums for congruence groups, which can be referred to \cite{DI82}.
The classical Kloosterman sum is defined as
\begin{align}\label{eqdefKSC}
S(m,n;q)=\msum_{d(\bmod q)}e\Bigg(\frac{m\ol{d}+nd}{q}\Bigg).
\end{align}
Let $\ma$ and $\mb$ be two cusps of the Hecke congruence group $\Gamma_0(Q)$ and let $\sigma_{\ma}$, $\sigma_{\mb}$ in $PSL(2,\mathbb{R})$ be scaling matrixes. For any $m,n\in\mathbb{Z}$ and $\gammaup\in\mathbb{R}$, for which there exists
$\begin{pmatrix}
     \alphaup & \betaup \\
     \gammaup & \deltaup
\end{pmatrix}$
in $\sigma_{\ma}^{-1}\Gamma_0(Q)\sigma_{\mb}$, the Kloosterman sum is defined by
\begin{align}\label{eqdefS}
S_{\ma,\mb}(m,n;\gamma)=\mathop{\sum\nolimits^{'}}_{\deltaup(\bmod {\gammaup\mathbb{Z}})}e\left(m\frac{\alphaup}{\gammaup}+n\frac{\deltaup}{\gammaup}\r),
\end{align}
where the sum is over the $\deltaup$'s, taken modulo $\gamma\mathbb{Z}$, for which there exist $\alpha$ and $\beta$ such that $\begin{pmatrix}
     \alphaup & \betaup \\
     \gammaup & \deltaup
\end{pmatrix}$
in $\sigma_{\ma}^{-1}\Gamma_0(Q)\sigma_{\mb}$.

An interesting example introduced in \cite{DI82} is the particular case we focus on. Let $Q=\tau s$ with $(\tau,s)=1$, and we consider $S_{\infty,1/s}(m,n;\gammaup)$ with $\sigma_{1/s}=\begin{pmatrix}
     \sqrt{\tau} & 0 \\
     s\sqrt{\tau} & \frac1{\sqrt{\tau}}
\end{pmatrix}$.
From the definition \eqref{eqdefS}, one notes that
$S_{\infty,1/s}(m,n;\gammaup)$ is defined if and only if, $\gammaup$ may be written as $\gammaup=s\sqrt{\tau}C$ with integer $C$ coprime with $\tau$, and this means
\begin{align}\label{eqSS}
S_{\infty,1/s}(m,n;\gammaup)=e\left(n\frac{\ol{s}}{\tau}\r)S(m\ol{\tau},n;sC),
\end{align}
where $s\ol{s}\equiv1 \pmod{\tau}$ and $\tau\ol{\tau}=1 \pmod {sC}$
(see also \cite[formula (1.6)]{DI82}).

To estimate a sum of Kloosterman sums of type \eqref{eqdefS}, we require the following Kuznetsov trace formula (see also \cite[Theorems 9.4, 9.5, 9.7]{Iwa95}).
\begin{lemma}[Kuznetsov formula]
Let $m,n$ be two positive integers and $\phi$ be a function $\mathcal{C}^2$ class on $[0,\infty)$, satisfying $\phi(0)=0$, $\phi^{(j)}(x)\ll(1+x)^{-2-\ve}$ for $j=0,1,2$; let $\ma$ and $\mb$ be two cusps of  $\Gamma=\Gamma_0(Q)$; denoting by $\sum\limits^{\Gamma}$ a summation performed over the positive real numbers $\gammaup$ for which $S_{\ma,\mb}(m,n;\gammaup)$ is defined, one has
\begin{align}
\sum^{\Gamma}\frac1{\gammaup}S_{\ma,\mb}(m,n;\gammaup)\phi\left(\frac{4\pi\sqrt{mn}}{\gammaup}\r)=&\sum_{2\le k\equiv0(\bmod 2)}\sum_{f\in \mathcal{B}_k(Q)}\Gamma(k)\tilde{\phi}(k)\sqrt{mn}\ol{\rho}_f(\ma,m)\rho_f(\mb,n)\notag\\
&+\sum_{f\in\mathcal{B}(Q)}\hat{\phi}(\kappa_f)\frac{\sqrt{mn}}{\cosh(\pi\kappa_f)}\ol{\rho}_f(\ma,m)\rho_f(b,n)\notag\\
&+\frac1{4\pi}\sum_{\mc}\int_{-\infty}^{\infty}\hat{\phi}(\kappa)\frac{\sqrt{mn}}{\cosh(\pi\kappa)}\ol{\rho}_{\ma,\mc}(m,\kappa) \rho_{\mb,\mc}(n,\kappa) d\kappa\notag
\end{align}
and
\begin{align}
\sum^{\Gamma}\frac1{\gammaup}S_{\ma,\mb}(m,-n;\gammaup)\phi\left(\frac{4\pi\sqrt{mn}}{\gammaup}\r)= &\sum_{f\in\mathcal{B}(Q)}\breve{\phi}(\kappa_f)\frac{\sqrt{mn}}{\cosh(\pi\kappa_f)}\ol{\rho}_f(\ma,m)\rho_f(b,-n)\notag\\
&+\frac1{4\pi}\sum_{\mc}\int_{-\infty}^{\infty}\breve{\phi}(\kappa)\frac{\sqrt{mn}}{\cosh(\pi\kappa)}\ol{\rho}_{\ma,\mc}(m,\kappa) \rho_{\mb,\mc}(-n,\kappa) d\kappa,\notag
\end{align}
where the Bessel transforms are defined by
\begin{align}
&\tilde{\phi}(k)=4i^k\int_0^\infty\phi(x)J_{k-1}(x)\frac{dx}x,\notag\\
&\hat{\phi}(\kappa)=2\pi i\int_0^\infty\phi(x)\frac{J_{2i\kappa}(x)-J_{-2i\kappa}(x)}{\sinh(\pi \kappa)}\frac{dx}x,\notag\\
&\breve{\phi}(\kappa)=8\int_0^\infty\phi(x)\cosh(\pi\kappa)K_{2i\kappa}(x)\frac{dx}x.\notag
\end{align}
\end{lemma}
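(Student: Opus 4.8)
The plan is to derive both identities along the classical Bruggeman--Kuznetsov route, by computing the Fourier expansion of a Poincaré series attached to the cusp $\ma$ of $\Gamma=\Gamma_0(Q)$ in two ways and comparing. For a smooth weight $\psi$ (to be matched to $\phi$ at the end) I would set
\[
P_{\ma,m}(z\,|\,\psi)=\sum_{\gammaup\in\Gamma_{\ma}\backslash\Gamma}\psi\bigl(\im(\sigma_{\ma}^{-1}\gammaup z)\bigr)\,e\bigl(m\,\re(\sigma_{\ma}^{-1}\gammaup z)\bigr),
\]
the decay and vanishing hypotheses on $\phi$ ($\phi(0)=0$, $\phi^{(j)}(x)\ll(1+x)^{-2-\ve}$ for $j=0,1,2$) being exactly what makes the corresponding $\psi$ produce a series that is absolutely convergent, square-integrable on $\Gamma\backslash\mathbb H$, and orthogonal to the residual (constant) spectrum.

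The \emph{geometric side} comes from the Bruhat decomposition of $\sigma_{\ma}^{-1}\Gamma\sigma_{\mb}$. The $n$-th Fourier coefficient of $P_{\ma,m}(\cdot\,|\,\psi)$ at the cusp $\mb$ splits into the identity double coset, contributing a diagonal term proportional to $\delta_{\ma,\mb}\delta_{m,n}$, and the nontrivial cosets, parametrized by the lower-left entry $\gammaup$. Carrying out the unipotent ($x$-)integral converts the additive characters into precisely the Kloosterman sum $S_{\ma,\mb}(m,n;\gammaup)$ of \eqref{eqdefS}, and the surviving $y$-integral against $\psi$ gives a Hankel-type transform of $\psi$ evaluated at $4\pi\sqrt{mn}/\gammaup$; choosing $\psi$ so this transform equals $\phi$ recovers the left-hand side of the stated identity. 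For the opposite-sign formula one repeats the computation with the Fourier mode $e(-ny)$, and the $y$-integral then yields a $K$-Bessel kernel, which is the source of $\breve\phi$.

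The \emph{spectral side} comes from expanding $P_{\ma,m}(\cdot\,|\,\psi)$ in the orthogonal decomposition of $L^2(\Gamma\backslash\mathbb H)$ into the Maass cusp forms $f\in\mathcal B(Q)$, the holomorphic cusp forms $f\in\mathcal B_k(Q)$ of even weight $k\ge2$ (handled by the holomorphic Poincaré series, so these contribute only to the same-sign formula), and the Eisenstein series attached to the cusps $\mc$. Each coefficient $\langle P_{\ma,m},f\rangle$ unfolds to $\ol{\rho}_f(\ma,m)$ times an integral of $\psi$ against the Whittaker function $W_{0,i\kappa_f}$ (or its holomorphic analogue), and a standard Bessel-integral evaluation, together with the Sears--Titchmarsh-type inversion relating that integral to the transform on the geometric side, identifies it with $\hat\phi(\kappa_f)$, resp.\ $\Gamma(k)\tilde\phi(k)$; the factor $1/\cosh(\pi\kappa_f)$ is built into the normalization of $W_{0,i\kappa}$. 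Taking the $n$-th Fourier coefficient at $\mb$ of this expansion reproduces the right-hand side, with the continuous spectrum giving $\frac1{4\pi}\sum_{\mc}\int_{-\infty}^\infty(\cdots)\,d\kappa$; equating the two expressions for the same coefficient (the $\delta_{\ma,\mb}\delta_{m,n}$ cancelling, or being absent, in this normalization) yields both displayed formulas.

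The hard part is analytic rather than structural: one must justify interchanging the spectral sum and integral with the geometric sum over $\gammaup$ and control the tails of the Bessel transforms — which is exactly why $\phi$ is required to vanish at $0$ and to decay with two derivatives. The cleanest device is to first insert an auxiliary spectral parameter $s$ so that every series converges absolutely for $\re(s)$ large, prove the identity there by the above unfolding, and then analytically continue to the desired specialization. A secondary, minor point is that both $S_{\ma,\mb}(m,n;\gammaup)$ and $\rho_f(\ma,\cdot)$ depend on the scaling matrices $\sigma_{\ma},\sigma_{\mb}$ only through a common unimodular factor, so one checks this factor cancels between the two sides; since in all our applications the pair is $(\infty,1/s)$ with $\sigma_{1/s}$ fixed as in \eqref{eqSS}, nothing further is needed. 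Complete details are in \cite[Chapter 9]{Iwa95} and \cite{DI82}.
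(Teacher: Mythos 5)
Your outline is the standard Bruggeman–Kuznetsov derivation (Poincaré series expanded geometrically via the Bruhat decomposition into Kloosterman sums and spectrally into Maass forms, holomorphic forms and Eisenstein series, matched through the Bessel transforms), which is exactly the route of the sources the paper itself relies on: the lemma is quoted here without proof, with reference to \cite[Theorems 9.4, 9.5, 9.7]{Iwa95} and \cite{DI82}. So your proposal is correct in approach and coincides with the paper's (cited) treatment; the remaining work is only the routine verification of the normalizing constants in $\tilde\phi$, $\hat\phi$, $\breve\phi$, which those references carry out.
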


The Kuznetsov formula is often used together with the spectral large sieve inequalities, proved by Deshouillers and Iwaniec \cite[Theorem 2]{DI82}.
\begin{lemma}[Spectral large sieve]\label{lemsls}
Let $K\ge 1$, $N\ge1$, and $(a_n)$ be a sequence of complex numbers. Let $\ma$ be a cusp of $\Gamma_0(Q)$, which is equivalent to some $\frac u w$ with positive coprime $u$ and $w$ such that $w\mid Q$. Then all three quantities
\[
\sum_{\substack{2\le k\le K\\ k~ \text{even}}}\Gamma(k)\sum_{f\in\mathcal{B}_k(Q)}\left|\sum_{n}a(n)\sqrt{n}\rho_f(\ma,n)\r|^2,  \ \ \ \ \ \ \ \ \ \ \ \ \sum_{\substack{f\in\mathcal{B}(Q)\\ |\kappa_f|\le K}}\frac1{\cosh(\pi\kappa_f)}\left|\sum_n a_n\sqrt{n}\rho_f(\ma,\pm n)\r|^2,
\]
\[
\sum_{\mc}\int_{-K}^K\frac1{\cosh(\pi\kappa)}\left|\sum_n a_n\sqrt{n}\rho_{\ma,\mc}(\pm n,\kappa)\r|^2d\kappa
\]
are bounded by
\[
(K^2+\muup(\ma)N^{1+\ve})\sum_n|a_n|^2,
\]
where $\muup(\ma)$ is defined as
$\muup(\infty)=Q^{-1}$ and $\muup(\ma)=\left(w,\frac Qw\r)Q^{-1}$.
\end{lemma}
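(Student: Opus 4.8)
The plan is to follow the classical argument of Deshouillers and Iwaniec \cite{DI82}: treat all three quantities simultaneously through the Kuznetsov trace formula, and then reduce the resulting sum of Kloosterman sums to an additive large sieve.

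First I would apply the Kuznetsov formula for the cusp pair $(\ma,\ma)$ (and, for the $-n$ terms, its opposite-sign variant) in the direction expressing a smoothly weighted spectral sum as a diagonal term plus a weighted sum of Kloosterman sums, the spectral weight $h$ being chosen non-negative throughout, $\ge1$ for $|\kappa|\le K$, with its holomorphic companion weight $\ge1$ for even $k$ in $[2,K]$, and well behaved on the exceptional segment $[-i\theta,i\theta]$; such an $h$ is furnished by the standard construction from Bessel kernels. Multiplying by $\overline{a_m}a_n$ and summing over $m,n$ collapses each spectral coefficient into $\sum_{m,n}\overline{a_m}a_n\sqrt{mn}\,\overline{\rho_f(\ma,m)}\rho_f(\ma,n)=\big|\sum_n a_n\sqrt n\,\rho_f(\ma,n)\big|^2$, and likewise for the Eisenstein part. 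Since all the spectral weights are non-negative and exceed $1$ on the relevant cutoff ranges, each of the three quantities to be estimated is at most the arithmetic side of the resulting identity, that is, a diagonal term $\ll\big(\int_{-\infty}^{\infty}h(\kappa)\,\kappa\tanh(\pi\kappa)\,d\kappa\big)\sum_n|a_n|^2\ll K^2\sum_n|a_n|^2$, plus the off-diagonal Kloosterman term
\[
\sum\nolimits^{\Gamma}\frac1{\gammaup}\sum_{m,n}\overline{a_m}a_n\,S_{\ma,\ma}(m,n;\gammaup)\,\psi\left(\frac{4\pi\sqrt{mn}}{\gammaup}\r),
\]
where $\psi$ is the Bessel transform of $h$ attached to the formula.

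The core of the argument is the bound $\ll\muup(\ma)N^{1+\ve}\sum_n|a_n|^2$ for this Kloosterman term. I would first use that $\ma$ is equivalent to $u/w$ with $w\mid Q$ to rewrite $S_{\ma,\ma}(m,n;\gammaup)$ in terms of ordinary Kloosterman sums $S(r,s;c)$, much as in \eqref{eqSS}, so that the admissible moduli $\gammaup$ run through a fixed arithmetic family whose counting function up to $X$ is $\asymp\muup(\ma)X$. Then, expanding $S(r,s;c)=\msum_{d(\bmod c)}e\big((rd+s\overline d)/c\big)$, separating the variables $m$ and $n$ inside $\psi(4\pi\sqrt{mn}/\gammaup)$ by a Mellin transform, and applying Cauchy--Schwarz reduces matters to $\msum_{d(\bmod c)}\big|\sum_m a_m e(md/c)\big|^2$, which the additive large sieve bounds by $(c+N)\sum_m|a_m|^2$. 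Summing over the modulus family, the ``$c$''-part sums against the density $\muup(\ma)$ and the ``$N$''-part against $1/c$, and the total is $\ll\muup(\ma)N^{1+\ve}\sum_n|a_n|^2$, with no residual power of $K$. Combined with the diagonal bound this gives the lemma; the $-n$ and Eisenstein cases are handled identically using the opposite-sign Kuznetsov formula.

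I expect the Kloosterman term to be the main obstacle, and within it two points in particular: getting the cusp dependence sharp, i.e.\ verifying that the family of moduli for $S_{\ma,\ma}$ has density of order exactly $\muup(\ma)$, which is what produces the factor $\muup(\ma)$ in place of a cruder power of $Q$; and carrying out the variable separation together with the additive large sieve efficiently enough to lose only $N^{\ve}$ rather than a genuine power of $N$, since a termwise application of Weil's bound is far too wasteful here. A secondary technical point is the construction of the test function $h$: it must be admissible for the Kuznetsov formula, have non-negative Bessel transforms dominating the indicators of $[2,K]$ and $[-K,K]$ simultaneously, and not degrade on the exceptional interval $[-i\theta,i\theta]$.
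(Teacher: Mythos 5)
This lemma is nowhere proved in the paper: it is quoted as known, with a pointer to Deshouillers--Iwaniec \cite{DI82} (their Theorem 2), so there is no internal argument to measure your proposal against; the only meaningful comparison is with that reference. At the level of strategy your outline does match it: Kuznetsov for the cusp pair $(\ma,\ma)$ with a nonnegative dominating test function, a diagonal contribution of size $K^2\sum_n|a_n|^2$, and the factor $\muup(\ma)$ traced to the density of the admissible moduli of $S_{\ma,\ma}$.

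As a proof, however, there is a genuine gap at exactly the decisive step. The bound $\ll\muup(\ma)N^{1+\ve}\sum_n|a_n|^2$ for the Kloosterman term, ``with no residual power of $K$'', is asserted rather than derived, and the route you describe (open the Kloosterman sums, separate $m,n$ in the Bessel kernel by a Mellin transform, apply Cauchy--Schwarz and the additive large sieve modulus by modulus, then sum over the modulus family) does not deliver it as stated. The kernel $\psi$ attached to a test function concentrating $|\kappa|\le K$ is not of one sign, is not supported on finitely many moduli, and its size depends on $K$; after the bound $(c+N)\sum_n|a_n|^2$ for each modulus $c$ you must sum the Mellin/Bessel weights over the moduli, and a direct execution of this produces cross terms in $K$ and $N$ --- the problematic regime being $N\gg K^2$, where the diagonal no longer dominates and the truncation of the modulus sum is governed by the Bessel asymptotics. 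Eliminating those terms is the real content of the theorem, and in \cite{DI82} it requires sharp estimates for the three Bessel transforms and a substantially more elaborate treatment of the range $N\gg K^2$, together with separate care for the opposite-sign and Eisenstein cases (the minus-sign formula has no holomorphic term, the $K$-Bessel transform of a nonnegative $\phi$ does not automatically dominate the relevant indicator, and the exceptional parameters $\kappa_f\in[-i\theta,i\theta]$ must still be controlled). Two further points you flag but do not settle also need proof rather than assertion: that a single admissible test function dominates the cutoffs for all three families simultaneously, and that the moduli set of $S_{\ma,\ma}$ for $\ma\sim u/w$ has counting function $\asymp\muup(\ma)X$ --- the identity \eqref{eqSS} recorded in the paper concerns the pair $(\infty,1/s)$, not $(\ma,\ma)$, so the corresponding double-coset computation has to be carried out.
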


\section{Sketch of the proof}\label{secT}
We give a quick outline of the proof of Theorem \ref{thmmains} in this section.
After applying the approximate functional equation of Lemma \ref{lemafe} and the orthogonality formula of Lemma \ref{lemof}, we split $M$ as
\begin{align}
M(\alpha,\beta,\gamma,\delta)=A_{1}(\alpha,\beta,\gamma,\delta) +A_{-1}(\alpha,\beta,\gamma,\delta),\notag
\end{align}
where
\begin{align}
A_1(\alpha,\beta,\gamma,\delta)=&\frac1{\vp^*(q)}\sum_{d\mid q}\vp(d)\mu\left(\frac qd\r)\sum_{\substack{(mn,q)=1\\ m\equiv \pm n(\bmod d)}}\frac{\sigma_{\alpha-\beta}(m) \sigma_{\gamma-\delta}(n)} {m^{\frac12+\alpha}n^{\frac12+\gamma}}
V_{\alpha,\beta,\gamma,\delta}\left(\frac{mn}{q^2}\r)\notag
\end{align}
and
\begin{align}\label{eqrA_1A_{-1}}
A_{-1}(\alpha,\beta,\gamma,\delta)=X_{\abgd}A_1(-\gamma,-\delta,-\alpha,-\beta).
\end{align}
Thus, it is sufficient to deal with $A_1$.

To treat $A_1$, we split it into diagonal terms and off-diagonal terms, and then classify the off-diagonal terms into two categories according to $m\equiv n \pmod d$ and $m\equiv -n\pmod d$. Furthermore, we split $m\equiv n \pmod d$ according to $m<n$ and $m>n$. Consequently, we rewrite $A_{1}$ as
\begin{align}\label{eqA_1}
A_{1}(\alpha,\beta,\gamma,\delta)=A_{D}(\alpha,\beta,\gamma,\delta) +A_{O}(\alpha,\beta,\gamma,\delta)+A^*_{O}(\alpha,\beta,\gamma,\delta) +A_{\ol{O}}(\alpha,\beta,\gamma,\delta),
\end{align}
where $A_{D}$ and $A_{\ol{O}}$ denote respective contributions from the diagonal terms and off-diagonal terms with $m\equiv -n \pmod d$, and where $A_{O}$ and $A^*_{O}$ denote contributions from such off-diagonal terms with $m\equiv n \pmod d$, according to $m<n$ and $m>n$.

The diagonal terms in $A_D$ mainly contribute to the first term of the asymptotic formula, which is not complicated and would be obtained in the next section.
For the off-diagonal terms, there is an obvious relationship that
\begin{align}\label{A*OAO}
A^*_O(\abgd)=A_O(\gdab),
\end{align}
and thus it is sufficient to treat $A_O$ and $A_{\ol{O}}$.

Applying the dyadic partition of unity to the sums over $m, n$, we have
\begin{align}\label{eqAOBMN}
A_{O}=\sum_{M, N} B_{M,N},\ \ \ \ \ \ A_{\ol{O}}=\sum_{M\le N} B_{\ol{M,N}}+\sum_{M>N} B_{\ol{M,N}},
\end{align}
where
\begin{align}
B_{M,N}=\frac1{\vp^*(q)}\sum_{d\mid q}\vp(d)\mu\left(\frac qd\r) \sum_{\substack{(mn,q)=1, m<n\\ m\equiv  n(\bmod d)}}\frac{\sigma_{\alpha-\beta}(m) \sigma_{\gamma-\delta}(n)} {m^{\frac12+\alpha}n^{\frac12+\gamma}}
V_{\alpha,\beta,\gamma,\delta}\left(\frac{mn}{q^2}\r)W\left(\frac mM\r) W\left(\frac nN\r)\notag
\end{align}
and
\begin{align}
B_{\ol{M,N}}=\frac1{\vp^*(q)}\sum_{d\mid q}\vp(d)\mu\left(\frac qd\r) \sum_{\substack{(mn,q)=1\\ m\equiv  -n(\bmod d)}}\frac{\sigma_{\alpha-\beta}(m) \sigma_{\gamma-\delta}(n)} {m^{\frac12+\alpha}n^{\frac12+\gamma}}
V_{\alpha,\beta,\gamma,\delta}\left(\frac{mn}{q^2}\r)W\left(\frac mM\r) W\left(\frac nN\r).\notag
\end{align}
By swapping $M$ and $N$, one notes that
\begin{align}\label{BBMN}
B_{\ol{M,N}}(\abgd)=B_{\ol{N,M}}(\gdab),
\end{align}
which means the treatment of $B_{\ol{M,N}}$ for $M\le N$ is enough.
Since our focus is on $B_{\ol{M,N}}$ with $M\le N$ and $B_{M,N}$ with $m<n$, we may assume
\[
M\ll Nq^\ve,
\]
a convention that holds true throughout the work. We may also assume $MN\le q^{2+\ve}$ due to the rapid decay of $V$.

To evaluate $B_{M,N}$ and $B_{\ol{M,N}}$, we write them as
\begin{align}\label{+ME}
B_{M,N}=(\text{Main term})_{M,N}+E_{M,N},\ \ \ \ \ \ \ B_{\ol{M,N}}=(\text{Main term})_{\ol{M,N}}+E_{\ol{M,N}}
\end{align}
for certain main terms that are too complex to explicitly write here.
We deduce exact expressions for the main terms in Section \ref{secmainAO}. Deduced from the exact expressions, the following upper bound
\begin{align}\label{eqbmain}
(\text{Main term})_{M,N},~ (\text{Main term})_{\ol{M,N}}\ll M^{\frac12}N^{-\frac12}q^\ve
\end{align}
holds, which turns out to be very small when $M,N$ are not close enough.

We deduce an estimate for error terms in Section \ref{secError}.
\begin{theorem}\label{thmEMN}
For $M\ll Nq^\ve $, $MN\ll q^{2+\ve}$, we have
\begin{align}\label{eqEMN}
E_{M,N}, ~E_{\ol{M,N}}\ll q^{-\frac12+\theta+\ve}M^{-\frac12}N^{\frac12}.
\end{align}
\end{theorem}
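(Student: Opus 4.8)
The plan is to bound $E_{M,N}$ by opening up the structure of $B_{M,N}$, isolating the off-diagonal terms, and treating them according to the relative size of $M$ and $N$ as indicated in the sketch. First I would recall that $B_{M,N}$ is the same as $A_O$ but weighted by $W(m/M)W(n/N)$, so after subtracting $(\text{Main term})_{M,N}$ the remaining error $E_{M,N}$ consists of off-diagonal contributions indexed by $d\mid q$ with $\varphi(d)\mu(q/d)$, the congruence condition $m\equiv n\pmod d$, the coprimality $(mn,q)=1$, and the smooth cutoff $V(mn/q^2)$. Writing $n=m+rd$ with $r\ge 1$, the congruence becomes additive and, after removing the coprimality by Lemma \ref{lemcoprimec}/\ref{lemcoprimechie} and splitting the divisor functions, one is led to Estermann $D$-function type sums $\mathcal{D}_q(s,\lambda,h/l,r)$. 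The main term $(\text{Main term})_{M,N}$ is precisely the polar contribution from the poles at $s=1$ and $s=1+\lambda$ of the relevant Estermann-type function (Lemma \ref{lemED} and its $\mathcal{D}_q$-analogue deduced in Section \ref{secDq}); subtracting it leaves the contribution of the functional-equation side, which is a dual sum involving Kloosterman sums $S(m',n';c)$ after opening the additive characters $e(n\bar h/l)$.

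The second step is to estimate these Kloosterman-sum sums. When $M$ and $N$ are far apart — specifically when $N/M$ is large — the dual variable ranges are favourable and I would first try the Weil bound $S(m,n;c)\ll (m,n,c)^{1/2}c^{1/2}d(c)$; this already handles the range $N/M>q^{1+\varepsilon}$. To push into the remaining range, and when $M,N$ are comparable, the Weil bound is insufficient and one applies the Kuznetsov trace formula for the congruence group $\Gamma_0(Q)$ together with the identity \eqref{eqSS} relating $S_{\infty,1/s}(m,n;\gamma)$ to classical Kloosterman sums, then bounds the spectral side by the spectral large sieve (Lemma \ref{lemsls}). This is where the factor $q^{\theta}$ enters: the contribution of the exceptional (non-tempered) Maass spectrum with spectral parameter in $[-i\theta,i\theta]$ is controlled using $\lambda_f(n)\ll n^{\theta+\varepsilon}$, while the tempered part and Eisenstein part give a stronger bound. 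One must also use \eqref{eqhrho}, \eqref{eqErho} to reduce general-basis coefficients $\rho_f(m)$ to newform coefficients so that the large sieve and the Hecke-eigenvalue bound apply; the factor $\mu(\mathfrak a)=(w,Q/w)Q^{-1}$ in the large sieve is what ultimately produces the saving $q^{-1/2}$ in \eqref{eqEMN}.

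The third ingredient, needed when the divisor function $d(n)=\sum_{n_1n_2=n}1$ is opened with $n_1\asymp N_1\gg q^{1/2+\varepsilon}$, is the uniform bound for the double Kloosterman sum $\mathfrak{D}(L,K;q)=\sum_{l\le L}|\msum_{k\le K}\alpha_k e(al\bar k/q)|$ proved in Section \ref{secdoublesum}; here I would invoke the exact evaluations of Kloosterman sums for prime-power moduli (Lemmas \ref{lemKS0}–\ref{lemKScoprime}) to collapse the inner sum to a Gauss sum and then apply the Gauss-sum bound of Lemma \ref{lemGauss}. For the complementary small-$N_1$ range one instead uses the functional equation for the generalized Estermann $D$-function twisted by a character, gaining an extra average over $m$. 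Putting the three regimes together and summing over $d\mid q$ (where Lemma \ref{lemsumd} reflects the Möbius cancellation and keeps the $d$-sum from costing a divisor factor), one collects all contributions into the claimed bound $E_{M,N}\ll q^{-1/2+\theta+\varepsilon}M^{-1/2}N^{1/2}$. The main obstacle, as the introduction stresses, is the coprimality condition $(mn,q)=1$ for general (non-prime) $q$: it obstructs direct use of the classical Estermann $D$-function and Voronoi summation, forcing the detour through $\mathcal{D}_q$ and through Kloosterman sums for congruence groups rather than the full modular group, and it is precisely in handling the resulting prime-power-modulus Kloosterman sums uniformly in $q$ that the delicate case analysis of Section \ref{secdoublesum} is required.
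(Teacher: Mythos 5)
Your central engine is the same as the paper's: after applying the functional equation of $\mathcal{D}_q$, the resulting exponential sums collapse to Ramanujan-times-Kloosterman sums, which via \eqref{eqSS} become Kloosterman sums $S_{\infty,1/bc}$ for $\Gamma_0(Q)$ with $Q=a^2bc$; the Kuznetsov formula and the spectral large sieve (Lemma \ref{lemsls}), together with the newform relations \eqref{eqMrho}, \eqref{eqhrho}, \eqref{eqErho}, then give \eqref{eqEMN}. So the route is essentially the paper's proof in Section \ref{secError}.

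Two points in your sketch are off and worth correcting. First, you fold in machinery that plays no role in proving \eqref{eqEMN}: the Weil-bound treatment of $N/M>q^{1+\ve}$, the opening $d(n)=\sum_{n_1n_2=n}$ with $n_1\asymp N_1\gg q^{\frac12+\ve}$, the double sum $\mathfrak{D}(L,K;q)$ of Section \ref{secdoublesum}, and the small-$N_1$ character-twisted Estermann argument. All of that belongs to Section \ref{secfar}, where one bounds the full block $B_{M,N}$ (not $E_{M,N}$) when $M$ and $N$ are far apart, and it enters the proof of Theorem \ref{thmmains} only through the comparison with \eqref{eqEMN} and the trivial bound \eqref{eqtrivalbound}; Theorem \ref{thmEMN} itself is proved purely spectrally, uniformly in the stated range. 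Second, your account of where $q^{\theta}$ enters is not the paper's mechanism: the large sieve of Lemma \ref{lemsls} already covers the whole Maass spectrum, exceptional eigenvalues included, with no $\theta$-loss. The loss arises because the condition $r\equiv 0\pmod d$ with $d\mid q$ composite forces one to write $r=d^{\sharp}r'$ and detach $d^{\sharp}$ from the Fourier coefficients via \eqref{eqMrho}, which produces a Hecke eigenvalue $\lambda_{f^*}$ at an integer of size up to $q$, bounded by Kim--Sarnak as $\ll q^{\theta+\ve}$ (Proposition \ref{proM}); the Eisenstein and holomorphic parts avoid this loss. Since handling this divisibility for general (non-prime) $q$ is precisely the new difficulty the theorem addresses, your sketch as written would not make the exponent $\theta$ in \eqref{eqEMN} appear for the right reason.
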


In Section \ref{secmain}, we combine all various main terms from $A_O$, $A_{\ol{O}}$, etc., to deduce the main terms of Theorem \ref{thmmains}. Note that the bound \eqref{eqEMN} does not work well when $M$ and $N$ are far away from each other. So we complete our proof of Theorem \ref{thmmains} in Section \ref{secfar} by bounding the off-diagonal terms with $M,N$ far away from each other, where an estimate of a double sum in Kloosterman sums is applied. We will also need the trivial bound
\begin{align}\label{eqtrivalbound}
B_{M,N}, B_{\ol{M,N}}\ll q^{-1+\ve}(MN)^{\frac12}.
\end{align}

\section{Diagonal terms}
For the diagonal terms with $m=n$, it is
\begin{align}
A_D(\alpha,\beta,\gamma,\delta)=&\frac1{\vp^*(q)}\sum_{d\mid q}\vp(d)\mu\left(\frac qd\r)\sum_{(n,q)=1}\frac{\sigma_{\alpha-\beta}(n) \sigma_{\gamma-\delta}(n)} {n^{1+\alpha+\gamma}}
V_{\alpha,\beta,\gamma,\delta}\left(\frac{n^2}{q^2}\r)\notag\\
=&\frac1{2\pi i}\int_{(1)}\frac{G(s)}{s}g_{\alpha,\beta,\gamma,\delta}(s)q^{2s} \sum_{(n,q)=1}\frac{\sigma_{\alpha-\beta}(n) \sigma_{\gamma-\delta}(n)} {n^{1+\alpha+\gamma+2s}}ds.\notag
\end{align}
By the Ramanujan identity, the sum over $n$ is
\begin{align}
\frac{\zeta_q(1+\alpha+\gamma+2s) \zeta_q(1+\alpha+\delta+2s)\zeta_q(1+\beta+\gamma+2s)\zeta_q(1+\beta+\delta+2s)} {\zeta_q(2+\alpha+\beta+\gamma+\delta+4s)},\notag
\end{align}
which has simple poles at $2s=-\alpha-\gamma$, etc., while $G(s)$ vanishes at these poles. We move $\re(s)$ to $-\frac14+\ve$, passing a pole at $s=0$ only. The integral along the new line is
bounded by
$\ll q^{-\frac12+\ve}$,
and the pole at $s=0$ gives
$Z_q(\alpha,\beta,\gamma,\delta)$.
We summarize this in the following:
\begin{lemma}
\label{lemD}
We have
\begin{align}
A_D(\alpha,\beta,\gamma,\delta)=Z_q(\alpha,\beta,\gamma,\delta) +O\left(q^{-\frac12+\ve}\r),\notag
\end{align}
and similarly the contribution of the diagonal terms to $A_{-1}$ is
\begin{align}
A_{-D}(\alpha,\beta,\gamma,\delta)=X_{\alpha,\beta,\gamma,\delta} Z_q(-\gamma,-\delta,-\alpha,-\beta) +O\left(q^{-\frac12+\ve}\r).\notag
\end{align}
\end{lemma}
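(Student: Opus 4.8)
The plan is to carry out the contour-integral computation already outlined just before the lemma, filling in the steps. First I would insert the Mellin representation \eqref{defV} of $V_{\abgd}$ into the definition of $A_D$ and pull the sum over $d$ to the front, which is legitimate since the inner sum over $n$ does not depend on $d$. The classical formula for the number of primitive characters, $\vp^*(q)=\sum_{d\mid q}\mu(d)\vp(q/d)=\sum_{d\mid q}\vp(d)\mu(q/d)$, then shows that the prefactor $\frac1{\vp^*(q)}\sum_{d\mid q}\vp(d)\mu(q/d)$ collapses to $1$, leaving exactly the integral displayed above the lemma. Next I would evaluate the Dirichlet series $\sum_{(n,q)=1}\sigma_{\alpha-\beta}(n)\sigma_{\gamma-\delta}(n)\,n^{-1-\alpha-\gamma-2s}$ by Ramanujan's identity $\sum_n\sigma_a(n)\sigma_b(n)n^{-w}=\zeta(w)\zeta(w-a)\zeta(w-b)\zeta(w-a-b)/\zeta(2w-a-b)$; imposing $(n,q)=1$ simply removes the Euler factors at $p\mid q$, turning every $\zeta$ into a $\zeta_q$, and with $a=\alpha-\beta$, $b=\gamma-\delta$, $w=1+\alpha+\gamma+2s$ one recovers precisely the $\zeta_q$-quotient quoted in the text.

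With this in hand I would shift the contour from $\re(s)=1$ to $\re(s)=-\tfrac14+\ve$. The four numerator factors contribute simple poles at $s=-\tfrac{\alpha+\gamma}2,\,-\tfrac{\alpha+\delta}2,\,-\tfrac{\beta+\gamma}2,\,-\tfrac{\beta+\delta}2$, all of which lie inside the strip for shifts $\ll(\log q)^{-1}$; but by Definition \ref{DefG} the polynomial $P_{\abgd}$ vanishes at each of these points, so $G(s)$ cancels all four, and the only surviving pole in the strip is the simple one at $s=0$ coming from $1/s$ (the separation of the shifts, to which we have reduced, keeps these four points distinct from $0$). One must also check that no further poles intrude: throughout the strip the argument of the denominator $\zeta_q(2+\alpha+\beta+\gamma+\delta+4s)$ stays in the half-plane $\re>1$, where $\zeta$ is bounded away from $0$, and the Euler factors $(1-p^{-(2+\cdots+4s)})^{-1}$ at $p\mid q$ remain $\ll 2^{\omega(q)}\ll q^\ve$ there. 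Hence the contour shift picks up only the residue at $s=0$, which equals $G(0)\,g_{\abgd}(0)\,Z_q(\abgd)=Z_q(\abgd)$, since $G(0)=1$ by Definition \ref{DefG} and $g_{\abgd}(0)=1$ by \eqref{defg}. On the shifted line $|q^{2s}|\ll q^{-1/2+\ve}$, the $\zeta_q$-quotient is $\ll(1+|t|)^{O(1)}q^\ve$ by the convexity bound for $\zeta$ together with $|\prod_{p\mid q}(1-p^{-s})|\ll q^\ve$, and $G(s)=P_{\abgd}(s)\exp(s^2)$ supplies Gaussian decay in $t$ that overwhelms the at most polynomial growth of $g_{\abgd}$; thus the shifted integral is $O(q^{-1/2+\ve})$, giving the asserted formula for $A_D$.

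For the second assertion I would use the relation \eqref{eqrA_1A_{-1}}: the diagonal part of $A_{-1}$ is $A_{-D}(\abgd)=X_{\abgd}\,A_D(-\gamma,-\delta,-\alpha,-\beta)$, so feeding in the formula just proved (the permuted, negated shifts are still $\ll(\log q)^{-1}$) and noting $X_{\abgd}\ll1$ for such shifts yields $A_{-D}(\abgd)=X_{\abgd}Z_q(-\gamma,-\delta,-\alpha,-\beta)+O(q^{-1/2+\ve})$. I do not expect a genuine obstacle here, since this is the easy case; the steps that demand the most care are the precise accounting of which poles lie in the strip (which is exactly why the vanishing conditions on $P_{\abgd}$ in Definition \ref{DefG} are imposed), verifying that the denominator $\zeta_q$ and the ``bad'' Euler factors contribute neither poles nor large factors, and making the growth estimates on the shifted line uniform in both $q$ and $t$ so that the error is a true power saving.
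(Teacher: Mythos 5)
Your proposal is correct and follows essentially the same route as the paper: collapse the $d$-sum via $\vp^*(q)=\sum_{d\mid q}\vp(d)\mu(q/d)$, apply the Ramanujan identity with the coprimality condition to get the $\zeta_q$-quotient, shift to $\re(s)=-\tfrac14+\ve$ where $G$ cancels the zeta poles so only $s=0$ contributes $Z_q(\abgd)$, bound the shifted integral by $q^{-\frac12+\ve}$, and deduce $A_{-D}$ from the relation \eqref{eqrA_1A_{-1}}. The extra checks you supply (denominator zero-freeness, Euler-factor bounds, decay of $G$) are exactly the details the paper leaves implicit.
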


\section{The $\mathcal{D}_{q}$-function}\label{secDq}
To deduce an asymptotic formula for the off-diagonal terms, we need to treat a divisor sum of type
\[
\sum_{n\in S_q}\frac{\sigma_\lambda(n)} {n^s} e\left(n\frac h{l}\r)
\]
with $S_q$ being a set of positive integers meeting some coprime conditions, such as $S_q=\{n\in \mathbb{N}: (n,q)=1, (n+r,q)=1\}$ for a given integer $r$. In particular, it reduces to the Estermann $D$-function $D(s,\lambda,\frac h{l})$ if $S_q=\mathbb{N}$.

\subsection{The generalized Estermann $D$-function with a character}
Suppose that $q, l, h$ are integers, and $\chi$ is a character modulo $q$.  For any given $\lambda\in\mathbb{C}$, the generalized Estermann $D$-function with a character is defined as
\begin{align}
D\left(s,\lambda,\frac h{lq}, \chi\r)=\sum_{n}\frac{\sigma_\lambda(n)\chi(n)}{n^s}e\left(n\frac h{lq}\r).
\end{align}
\begin{lemma}\label{lemGED}
For any fixed $\lambda\in\mathbb{C}$, $D(s,\lambda,\frac h{lq}, \chi)$ is meromorphic as a function of $s$, satisfying the functional equation
\begin{align}\label{eqfeqD}
D\left(s,\lambda,\frac h{lq},\chi\r)&=2(2\pi)^{-2-\lambda+2s}(lq)^{\lambda-2s}\Gamma\left(1-s\r) \Gamma\left(1+\lambda-s\r)\\
&\times\left[-\cos\left(\frac{\pi}2\left(2s-\lambda\r)\r) \mathcal{A}_1\left(1-s,\lambda,\frac h{lq},\chi\r)+\cos\left(\frac{\pi\lambda}2\r)\mathcal{A}_2\left(1-s,\lambda,\frac h{lq},\chi\r)\r],\notag
\end{align}
where
\begin{align}\label{eqA1-2}
\mathcal{A}_1\left(s,\lambda,\frac h{lq},\chi\r)&=\sum_{1\le u,v\le lq}\chi(uv)e\left(\frac{uvh}{lq}\r)F\left(s,\frac{u}{lq}\r) F\left(s+\lambda,\frac{v}{lq}\r),\\
\mathcal{A}_2\left(s,\lambda,\frac h{lq},\chi\r)&=\sum_{1\le u,v\le lq}\chi(uv)e\left(\frac{uvh}{lq}\r)F\left(s,\frac{u}{lq}\r) F\left(s+\lambda,-\frac{v}{lq}\r).
\end{align}
If $\lambda\neq 0$, then $D$ has two simple poles at $s=1$ and $s=1+\lambda$ with respective residues
\begin{align}
&(lq)^{-2+\lambda}\sum_{1\le u,v\le lq}\chi(uv)e\left(\frac{uvh}{lq}\r) \zeta\left(1-\lambda,\frac{v}{lq}\r), \notag\\
&(lq)^{-2-\lambda}\sum_{1\le u,v\le lq}\chi(uv)e\left(\frac{uvh}{lq}\r)\zeta\left(1+\lambda,\frac{u}{lq}\r).\notag
\end{align}
\end{lemma}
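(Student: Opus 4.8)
The plan is to mimic the classical proof of the Estermann functional equation (as in \cite[Lemma 3.7]{Mot97}), but carefully tracking the character $\chi$ and working modulo $lq$ rather than modulo $l$. First I would note that the series defining $D(s,\lambda,\tfrac{h}{lq},\chi)$ converges absolutely for $\re(s)>1+\max(0,\re\lambda)$, and rewrite it by splitting $n$ into residue classes modulo $lq$ via the multiplicativity of $\sigma_\lambda$: writing $n=d_1 d_2$ with $d_1,d_2$ running over the divisor pairs, and grouping $d_1\equiv u$, $d_2\equiv v\pmod{lq}$, one gets
\begin{align}
D\left(s,\lambda,\frac h{lq},\chi\r)=\sum_{1\le u,v\le lq}\chi(uv)e\left(\frac{uvh}{lq}\r)(lq)^{-s-\lambda}\zeta\left(s,\frac{u}{lq}\r)\zeta\left(s+\lambda,\frac{v}{lq}\r)\cdot(\text{bookkeeping}),\notag
\end{align}
so that $D$ is expressed as a finite linear combination of products of two Hurwitz zeta-functions in the two variables $s$ and $s+\lambda$. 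This immediately gives the meromorphic continuation, since each $\zeta(s,x)$ continues to all of $\mathbb{C}$ with a single simple pole at the relevant argument equal to $1$.

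Next I would apply the Hurwitz functional equation stated in Section 2.1, namely $\zeta(1-s,x)=(2\pi)^{-s}\Gamma(s)\big(e(-\tfrac{\pi s}{2})F(s,x)+e(\tfrac{\pi s}{2})F(s,-x)\big)$, to \emph{both} Hurwitz zeta factors after the substitution $s\mapsto 1-s$. Each factor $\zeta(1-s,u/(lq))$ becomes a combination of $F(s,u/(lq))$ and $F(s,-u/(lq))$ with the Gamma factor $(2\pi)^{-s}\Gamma(s)$ and exponential phases $e(\pm\pi s/2)$, and similarly $\zeta(1-s+\lambda,v/(lq))$ produces $(2\pi)^{-s+\lambda}\Gamma(s-\lambda)$ and phases $e(\pm\pi(s-\lambda)/2)$ — wait, I must be careful: it is cleaner to substitute so that the arguments of the two zeta functions become $1-s$ and $1+\lambda-s$ (matching the claimed residues at $s=1$, $s=1+\lambda$), which means I start from the continuation and apply the Hurwitz equation to land on the combination with arguments $1-(1-s)=s$ and $1-(1-s)=\ldots$; the precise pairing of signs is the routine part. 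Collecting the four sign combinations and using the product-to-sum identities $e(-\tfrac{\pi s}{2})e(\tfrac{\pi(s-\lambda)}{2})+\cdots$ to convert exponentials into $\cos(\tfrac\pi2(2s-\lambda))$ and $\cos(\tfrac{\pi\lambda}{2})$, and recognizing the resulting double sums over $u,v$ as precisely $\mathcal{A}_1$ and $\mathcal{A}_2$ in \eqref{eqA1-2}, I obtain the functional equation \eqref{eqfeqD}, with the prefactor $2(2\pi)^{-2-\lambda+2s}(lq)^{\lambda-2s}\Gamma(1-s)\Gamma(1+\lambda-s)$ assembled from the two $(2\pi)$-power-and-Gamma contributions and the $(lq)$-powers from the Hurwitz scalings.

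For the residues: the only poles of the continued $D$ come from the simple pole of $\zeta(s,x)$ at argument $1$. In the expression with Hurwitz arguments $s$ and $s+\lambda$ (equivalently, after renaming, a pole when the first zeta argument is $1$, i.e. at $s=1$, and when the second is $1$, i.e. at $s=1+\lambda$ — here I use $\lambda\neq0$ so these are distinct), the residue of $\zeta(s,x)$ at $s=1$ is $1$, and the other factor is evaluated at the remaining argument, which produces $\zeta(1-\lambda, v/(lq))$ at $s=1$ and $\zeta(1+\lambda, u/(lq))$ at $s=1+\lambda$; multiplying by the surviving $(lq)^{-1-\lambda}$ or $(lq)^{-1+\lambda}$ scalar and the exponential/character weights gives exactly the two claimed residue expressions (with the stated powers $(lq)^{-2+\lambda}$ and $(lq)^{-2-\lambda}$ once the $(lq)^{-s-\lambda}$ prefactor from the Dirichlet-series rearrangement is included).

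The main obstacle I anticipate is purely bookkeeping rather than conceptual: keeping the two independent spectral parameters $s$ and $s+\lambda$ straight through the double application of the Hurwitz functional equation, correctly matching the four phase combinations $e(\pm\pi s/2)e(\pm\pi(s-\lambda)/2)$ against the target $\cos(\tfrac\pi2(2s-\lambda))$ and $\cos(\tfrac{\pi\lambda}{2})$ terms (and getting the sign in front of $\mathcal{A}_1$ right), and checking that the rearrangement into residue classes mod $lq$ is legitimate — i.e. that the absolutely convergent double Dirichlet series may be regrouped, and that the character $\chi$ being a character mod $q$ (not mod $lq$) causes no difficulty, since $\chi(n)=\chi(uv)$ depends only on $n\bmod q$ hence a fortiori on $n\bmod lq$. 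None of these steps requires new ideas beyond the classical Estermann argument, so the lemma follows by a careful but standard computation.
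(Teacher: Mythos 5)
Your proposal is correct and follows essentially the same route as the paper: the paper simply cites Wu \cite[Lemma 4.3]{Wu19} for the functional equation (whose proof is exactly this classical Estermann-type double application of the Hurwitz functional equation, the primitivity assumption there being irrelevant), and it derives the residues from the very identity you start from, namely
$D\left(s,\lambda,\frac h{lq},\chi\right)=(lq)^{-2s+\lambda}\sum_{1\le u,v\le lq}\chi(uv)e\left(\frac{uvh}{lq}\right)\zeta\left(s,\frac{u}{lq}\right)\zeta\left(s-\lambda,\frac{v}{lq}\right)$.
One bookkeeping correction to your sketch: the prefactor is $(lq)^{-2s+\lambda}$ and the second Hurwitz argument is $s-\lambda$ (not $(lq)^{-s-\lambda}$ and $s+\lambda$ as written mid-argument); with that fixed, the poles sit at $s=1$ and $s=1+\lambda$ with exactly the stated powers $(lq)^{-2+\lambda}$ and $(lq)^{-2-\lambda}$, and the four phase combinations collapse, after the substitution $u\to lq-u$, $v\to lq-v$ (legitimate since $\chi(uv)$ and $e(uvh/lq)$ are unchanged), to the claimed terms $-\cos\left(\frac{\pi}2(2s-\lambda)\right)\mathcal{A}_1$ and $\cos\left(\frac{\pi\lambda}2\right)\mathcal{A}_2$.
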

\begin{proof}
The functional equation has already been obtained by the author in \cite[Lemma 4.3]{Wu19}, whose focus is on primitive character. However, the primitive character condition has not been used in the proof of the functional equation, and so it also applies here.
The two residues follow easily by calculating in the expression
\begin{align}\label{eqD}
D\left(s,\lambda,\frac h{lq},\chi\r)=(lq)^{-2s+\lambda}\sum_{1\le u,v\le lq}\chi(uv)e\left(\frac{uvh}{lq}\r)\zeta\left(s,\frac{u}{lq}\r) \zeta\left(s-\lambda,\frac{v}{lq}\r)
\end{align}
the residue of the Hurwitz zeta-function at the pole $1$.
\end{proof}

Note in the functional equation $\eqref{eqfeqD}$ that the coprime condition $(h,lq)=1$ is not assumed. For the special case with $(h,lq)=1$ and $\chi=\chi_0$, we may simplify $\mathcal{A}_i$ by the following lemma.

\begin{lemma}\label{lemiD}
Let $h,l,q$ be three integers satisfying $(h,lq)=1$ and $\chi_0$ be the principal character modulo $q$. For any given $\lambda\in\mathbb{C}$, we have
\begin{align}
D\left(s,-\lambda,-\frac {\ol{h}}{lq}\r)=\frac1{lq}\sum_{1\le uv\le lq}e\left(\frac{uvh}{lq}\r)\sum_{m}\frac1{m^s}e\left(\frac{mu}{lq}\r) \sum_{n}\frac1{n^{s+\lambda}}e\left(\frac{nv}{lq}\r)\notag
\end{align}
and
\begin{align}
D\left(s,-\lambda,-\frac {\ol{h}}{lq},\chi_0\r)=\frac1{lq}\sum_{1\le uv\le lq}e\left(\frac{uvh}{lq}\r)\sum_{(m,q)=1}\frac1{m^s}e\left(\frac{mu}{lq}\r) \sum_{(n,q)=1}\frac1{n^{s+\lambda}}e\left(\frac{nv}{lq}\r).\notag
\end{align}
\end{lemma}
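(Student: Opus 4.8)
The plan is to derive both identities in Lemma~\ref{lemiD} by expanding the shifted Estermann $D$-function through equation~\eqref{eqD} of Lemma~\ref{lemGED} and then applying the functional equation for the Hurwitz zeta-function recorded in the preliminaries, which converts each Hurwitz zeta into a pair of Lerch-type series $F(s,x)$ and ultimately into the exponential sums $\sum_m m^{-s}e(mu/(lq))$ appearing on the right-hand side. Concretely, I would start from
\begin{align}
D\left(s,-\lambda,-\frac{\ol h}{lq},\chi\right)=(lq)^{-2s-\lambda}\sum_{1\le u,v\le lq}\chi(uv)e\left(-\frac{uv\ol h}{lq}\right)\zeta\left(s,\frac{u}{lq}\right)\zeta\left(s+\lambda,\frac{v}{lq}\right),\notag
\end{align}
valid for $\re(s)$ large, and then use the functional equation to write each $\zeta(s,x)$ in terms of $F(1-s,x)$; however, since the stated right-hand sides involve $F$-type series at the \emph{same} argument $s$ rather than $1-s$, the cleaner route is to go the other direction: expand $F(s,u/(lq))=\sum_m m^{-s}e(mu/(lq))$ directly (this is just the definition of $F$), reorganize the double sum over $u,v$, and use the substitution $w\equiv u\ol h\pmod{lq}$ (legitimate since $(h,lq)=1$) to absorb the factor $e(-uv\ol h/(lq))$ into a clean reindexing.

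The key steps, in order, are as follows. \textbf{Step 1.} Write $D(s,-\lambda,-\ol h/(lq),\chi)=\sum_n \sigma_{-\lambda}(n)\chi(n)n^{-s}e(-n\ol h/(lq))$ and split $n=ab$, so that the sum becomes $\sum_{a,b}a^{-s}b^{-s-\lambda}\chi(ab)e(-ab\ol h/(lq))$ (using $\sigma_{-\lambda}(n)=\sum_{ab=n}a^{-\lambda}$ after the standard rewriting, with care about which variable carries the $\lambda$). \textbf{Step 2.} Detect the residue classes of $a$ and $b$ modulo $lq$: write $a\equiv u$, $b\equiv v\pmod{lq}$ with $1\le u,v\le lq$, pulling out the factor $e(-uv\ol h/(lq))$ which depends only on $u,v\bmod lq$ --- here one must check $uv\ol h\pmod{lq}$ is well-defined, which it is. \textbf{Step 3.} Apply the change of variables replacing the outer pair $(u,v)$ by $(u',v)$ with $u'\equiv -u\ol h$ or a similar relabeling so that the phase $e(-uv\ol h/(lq))$ is rewritten as $e(u'v/(lq))$ up to reindexing; equivalently, recognize that summing $e(-uv\ol h/(lq))$ against the inner exponential sums in $m,n$ is the same, after the substitution $u\mapsto -u h$ (mod $lq$), as the asserted expression. \textbf{Step 4.} For the principal-character case, note $\chi_0(ab)=1$ iff $(ab,q)=1$ iff $(a,q)=(b,q)=1$, which exactly inserts the coprimality restrictions $(m,q)=1$ and $(n,q)=1$ into the inner sums; everything else is identical. \textbf{Step 5.} Confirm the prefactor: the two outer congruence conditions on $a,b$ produce the factor $(lq)^{-1}$ — actually one must be slightly careful, since writing $a$ as running over a fixed residue class $u\pmod{lq}$ means $a=u+lq\cdot(\text{nonneg integer})$, i.e.\ the inner sum over $m$ is essentially $\sum_{m\equiv u} m^{-s}$, not $\sum_m m^{-s}e(mu/(lq))$; so the correct manipulation is instead to use the orthogonality $\frac1{lq}\sum_{w\bmod lq}e(w(a-u)/(lq))$ to detect $a\equiv u$, which is precisely where the factor $\frac1{lq}$ and the exponential $e(mu/(lq))$ (after renaming $w\leftrightarrow m$... no: after summing over the detecting variable and recognizing the resulting series in the free variable) come from. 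I would carry out this orthogonality detection carefully to land exactly on the stated right-hand side.

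The main obstacle I anticipate is bookkeeping the interplay of the two reindexings: the substitution $u\mapsto -u\ol h\pmod{lq}$ that eliminates the $\ol h$ from the phase, combined with the orthogonality trick that introduces the $\frac1{lq}$ and the exponentials $e(mu/(lq))$, $e(nv/(lq))$ — getting all the signs and the roles of $(u,v)$ versus $(m,n)$ consistent with the asserted formula, and verifying that the constraint ``$1\le uv\le lq$'' in the statement (which I read as $1\le u,v\le lq$, a likely typo) is the right range. A secondary point requiring care is convergence/analytic continuation: the identities are first established for $\re(s)$ sufficiently large where all series converge absolutely, and then hold for all $s$ by analytic continuation of both sides as meromorphic functions (the left side by Lemma~\ref{lemGED}, the right side because the finite $u,v$ sum of products of $F$-series inherits the meromorphic continuation of $F$). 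Once the bookkeeping is pinned down, no deep input is needed — this is a reorganization of absolutely convergent series plus Hurwitz/Lerch formalism, so I expect the proof to be short.
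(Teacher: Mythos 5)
Your plan is correct in substance, and the identity you are assembling is exactly the one the paper proves; the only real difference is the direction of the computation. You build the right-hand side up from the Dirichlet series of $D\left(s,-\lambda,-\frac{\ol h}{lq},\chi_0\right)$ by opening $\sigma_{-\lambda}$ and introducing $u,v$ via orthogonality modulo $lq$, whereas the paper starts from the right-hand side and collapses it: summing the finite $v$-sum first gives $\sum_{1\le v\le lq}e\left(\frac{(n+uh)v}{lq}\right)=lq$ exactly when $lq\mid n+uh$ and $0$ otherwise, which cancels the prefactor $\frac1{lq}$, forces $u\equiv -n\ol h\pmod{lq}$ (unique because $(h,lq)=1$), turns $e\left(\frac{mu}{lq}\right)$ into $e\left(-\frac{mn\ol h}{lq}\right)$, and the surviving sum over $(m,n)$ with $(mn,q)=1$ is precisely the left-hand side. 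Running it in the paper's direction settles exactly the bookkeeping you flag in Steps 2 and 5: only one congruence is detected (through the auxiliary variable $v$), which is why a single factor $\frac1{lq}$ appears rather than $(lq)^{-2}$ --- detecting the residues of both $m$ and $n$, as your Step 2 literally proposes, would overshoot --- while the exponential $e\left(\frac{mu}{lq}\right)$ comes not from orthogonality but from rewriting the phase with $u\equiv-n\ol h$. Your reading of ``$1\le uv\le lq$'' as $1\le u,v\le lq$ is correct, your Step 4 for inserting the coprimality conditions via $\chi_0$ is exactly what is needed, and your remark about first working where everything converges absolutely and then continuing meromorphically is the right (if unstated in the paper) justification; the Hurwitz functional-equation detour in your opening paragraph is indeed unnecessary, as you yourself note.
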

\begin{proof}
We show the second identity in full details only, because the treatment of the first identity is similar and easier.
For the sums on the right-hand side of the second identity, we sum over $v$ first to see that the sum vanishes unless $lq\mid n+uh$, in which case it equals $lq$.
Thus, after an exchange of the summation, the right-hand side evolves into
\begin{align}
\sum_{1\le u\le lq}\sum_{(m,q)=1}\frac1{m^s}e\left(\frac{mu}{lq}\r) \sum_{\substack{n\equiv -uh (\bmod lq)\\ (n,q)=1}} \frac1{n^{s+\lambda}} =\sum_{(mn,q)=1}\frac{1}{m^sn^{s+\lambda}}e\Bigg(-\frac{mn\ol{h}}{lq}\Bigg),\notag
\end{align}
 and this establishes the lemma.
\end{proof}

\subsection{The $\mathcal{D}_{q}$-function}\label{sec5.2}
For integers $q, l, r, h$ and any $\lambda\in\mathbb{C}$, we define the $\mathcal{D}_{q}$-function as
\begin{align}
\mathcal{D}_{q}\left(s,\lambda,\frac h{l},r\r)=\sum_{\substack{(n,q)=1\\ (n+r,q)=1}}\frac{\sigma_\lambda(n)} {n^s} e\left(n\frac h{l}\r).\notag
\end{align}
Obviously, there is $\mathcal{D}_{q}=\mathcal{D}_{q^*}$ with $q^*$ being the largest square-free divisor of $q$.

\begin{proposition}\label{lemgED2}
Let $q, l, r, h$ be integers, satisfying $(l,qh)=1$. For any fixed $\lambda\in\mathbb{C}$, $\mathcal{D}_{q}(s,\lambda,\frac h{l},r)$ is meromorphic as a function of $s$, satisfying the functional equation
\begin{align}
\mathcal{D}_{q}&\left(\tfrac12+s,\lambda,\frac h{l},r\r)=2(2\pi)^{-1-\lambda+2s}\Gamma\left(\tfrac12-s\r) \Gamma\left(\tfrac12+\lambda-s\r)\notag\\
&\ \ \ \ \ \ \ \ \ \ \times\sum_{a\mid q}\sum_{b\mid q_r}\frac{\mu(ab)}{ab}\sum_{a_1\mid a}\sum_{b_1\mid b} (la_1b_1)^{\lambda-2s} \msum_{i(\bmod a_1)}\msum_{j(\bmod b_1)}
e\left(\frac{jr}{b_1}\r) \notag\\
&\ \ \ \ \ \ \ \ \ \ \times\left[\sin\left(\pi\left(s-\frac\lambda2\r)\r) D\Bigg(\tfrac12-s,-\lambda,-\frac{\ol{h_{i,j}}}{la_1b_1},\chi'_0\Bigg) +\cos\left(\frac{\pi\lambda}2\r) D\Bigg(\tfrac12-s,-\lambda,\frac{\ol{h_{i,j}}}{la_1b_1},\chi'_0\Bigg)\r]\notag
\end{align}
with $\chi'_0$ being the principal character modulo $a_1b_1$ and
\begin{align}\label{eqhij}
\left\{
\begin{aligned}
&h_{i,j}\equiv ilb_1  \pmod{a_1},\\
&h_{i,j}\equiv jla_1 \pmod{b_1},\\
&h_{i,j}\equiv ha_1b_1 \pmod{l}.
\end{aligned}
\right.
\end{align}
If $\lambda\neq 0$, then $\mathcal{D}_{q}$ has simple poles at $s=1$ and $s=1+\lambda$ with respective residues
\begin{align}
\frac{\vp(q)}{q}\sum_{b\mid q_r}\frac{\mu(b)}{\vp(b)} l^{-1+\lambda}\zeta_q(1-\lambda),\ \ \ \ \ \ \ \ \ \ \ \ \ \ \ \ \  \frac{\vp(q)}{q}\sum_{b\mid q_r}\frac{\mu(b)}{\vp(b)} l^{-1-\lambda}\zeta_q(1+\lambda).\notag
\end{align}
\end{proposition}

\begin{proof}
Note that a replacement of $q$ with $q^*$ does not affect the functional equation, as well as the two residues. Thus, it is sufficient to prove the proposition for square-free $q$, a convention that holds through the proof.
By expressing the coprime conditions in terms of multiplicative characters with \eqref{eqcoprimechi}, we have
\begin{align}\label{eqDsharpD}
\mathcal{D}_{q}\left(s,\lambda,\frac h{l},r\r)=\sum_{b\mid q}\frac{\mu(b)}{\vp(b)} \sum_{\chi(\bmod b)}\ol{\chi}(-r)D\left(s,\lambda,\frac {hq}{lq},\chi_0\chi\r),
\end{align}
where $\chi_0$ is the principal character modulo $q$.
We apply the functional equation \eqref{eqfeqD} to the generalized Estermann $D$-function. It follows that
\begin{align}\label{eqD_q+1}
\mathcal{D}_{q}\left(s,\lambda,\frac h{l},r\r)=&2(2\pi)^{-2-\lambda+2s}(lq)^{\lambda-2s} \Gamma(1-s)\Gamma(1+\lambda-s)\\
&\times\left[-\cos\left(\frac{\pi}2(2s-\lambda)J_1(1-s) +\cos\left(\frac{\pi\lambda}{2}\r)J_2(1-s)\r)\r]\notag
\end{align}
with
\begin{align}
J_i(s)=\sum_{b\mid q}\frac{\mu(b)}{\vp(b)}\sum_{\chi (\bmod b)}\ol{\chi}(-r)\mathcal{A}_i\left(s,\lambda,\frac {hq}{lq},\chi_0\chi\r)\notag
\end{align}
for $i=1,2$.
Applying the expression \eqref{eqA1-2} and then executing the sum over $b$ and $\chi$ with \eqref{eqcoprimechi} again, we have
\begin{align}\label{eqchiJ1}
J_1(s)=\sum_{\substack{1\le u,v\le lq\\ (uv,q)=1\\ (uv+r,q)=1}}e\left(\frac{uvh}{l}\r) F\left(s,\frac{u}{lq}\r) F\left(s+\lambda,\frac{v}{lq}\r).
\end{align}

To evaluate the sum in \eqref{eqchiJ1}, we should remove the coprime conditions on $uv$ again.
With $q_r$ being the largest factor of $q$ coprime with $r$ and $q=(q,r)q_{_r}$, it follows that
\begin{align}
\begin{array}{c}
(uv,q)=1\\
(uv+r,q)=1
\end{array}\Longleftrightarrow
\begin{array}{c}
(uv,q)=1\\
(uv+r,q_r)=1
\end{array}.\notag
\end{align}
Applying this and writing the coprime conditions in terms of M\"{o}bius function, we have
\begin{align}
\sum_{\substack{1\le u,v\le lq\\ (uv,q)=1\\ (uv+r,q)=1}}=\sum_{a\mid q}\mu(a)\sum_{b\mid q_r}\mu(b)\sum_{\substack{1\le u,v\le lq\\ a\mid uv\\ b\mid uv+r}}=\sum_{a\mid q}\sum_{b\mid q_r}\mu(ab)\sum_{\substack{1\le u,v\le lq\\ a\mid uv\\ b\mid uv+r}}\notag
\end{align}
by assuming $(a,b)=1$ at no cost.

Besides, we apply \eqref{eqdivisor} to write the conditions $a\mid uv$ and $b\mid uv+r$ in terms of the Ramanujan sum, and then it follows that
\begin{align}
\sum_{\substack{1\le u,v\le lq\\ (uv,q)=1\\ (uv+r,q)=1}}=\sum_{a\mid q}\sum_{b\mid q_r}\frac{\mu(ab)}{ab}\sum_{a_1\mid a}\sum_{b_1\mid b}\sum_{1\le u,v\le lq}c_{a_1}(uv)c_{b_1}(uv+r).\notag
\end{align}
Inserting this into \eqref{eqchiJ1} and applying the exponential sum formula to the Ramanujan sum,
then after a simple arrangement, we have
\begin{align}\label{eqJ1lq}
J_1(s)=&\sum_{a\mid q}\sum_{b\mid q_r}\frac{\mu(ab)}{ab}\sum_{a_1\mid a}\sum_{b_1\mid b} \mathop{\sum\nolimits^*}_{i(\bmod a_1)}\mathop{\sum\nolimits^*}_{j(\bmod b_1)}
e\left(\frac{jr}{b_1}\r)\\
&\times\sum_{1\le u,v\le lq} e\left(\frac{uv(ha_1b_1+jla_1+ilb_1)}{la_1b_1}\r) \sum_{m}\frac1{m^{s}}e\left(\frac{mu}{lq}\r) \sum_{n}\frac1{n^{s+\lambda}}e\left(\frac{nv}{lq}\r).\notag
\end{align}

In the expression \eqref{eqJ1lq}, we should note that there are many terms in the sum over $u,v$, which make no essential contribution to $J_1$. In particular, all the terms with $(u,q)>1$ or $(v,q)>1$ are in the case, which will certainly vanish if we sum over $a,b,a_1,b_1,i,j$ first. What is more, we note that many terms in the summation over $m,n$ will vanish if we sum over other variables first. As we will see, these nonessential terms do not contribute to $\mathcal{A}_1$, but lead to obstructions when we bound the error with the functional equation. Thus, we should kick them out now. More precisely, we observe that all the terms in the summation over $m,n$ satisfying at least one of the following three conditions
$(mn,a_1b_1)>1$, $q_{a_1b_1}\nmid m$, and $q_{a_1b_1}\nmid n$
can be removed at no cost. Let us see this in detail.
For notational convenience, we apply the following notation
\begin{align}
h_{i,j}=ha_1b_1+jla_1+ilb_1.\notag
\end{align}
Then it is easy to see that $(h_{i,j},la_1b_1)=1$
and
\begin{align}
\left\{
\begin{aligned}
&h_{i,j}\equiv ilb_1  \pmod{a_1},\\
&h_{i,j}\equiv jla_1 \pmod{b_1},\\
&h_{i,j}\equiv ha_1b_1 \pmod{l}.
\end{aligned}
\right.\notag
\end{align}
We show in full detail for the summation over $m$, and the case for $n$ is identical.
More precisely, we first kick out all nonessential terms with $(v,q)>1$ at no cost, and then the $u$-sum in \eqref{eqJ1lq} is
\begin{align}
\sum_{1\le u\le lq}e\left(\frac{vh_{i,j}q_{a_1b_1}+m}{lq}u\r)=\left\{
\begin{aligned}
&lq \ \ \ \text{for} \ \ \ vh_{i,j}q_{a_1b_1}\equiv -m \pmod{lq},\\
&0\ \ \ \ \text{otherwise}.\notag
\end{aligned}
\right.
\end{align}
Here $q_{a_1b_1}=q/a_1b_1$ since $q$ is square free.
This indicates that contribution of the $m$-sum arises just from such terms that
$-m\equiv vh_{i,j}q_{a_1b_1}~ (\bmod lq)$,
in particular $q_{a_1b_1}\mid m$ and $(m,a_1b_1)=1$ since $(vh_{i,j}q_{a_1b_1},a_1b_1)=1$. An identical discussion shows the same result for the summation over $n$.
Thus, we add
\[
(m,a_1b_1)=1,\ \ \ q_{a_1b_1}\mid m, \ \ \ (n,a_1b_1)=1,\ \ \ q_{a_1b_1}\mid n
\]
to the sum at no cost.

We extract $q_{a_1b_2}$ from the sum to get
\begin{align}
J_1(s)=&\sum_{a\mid q}\sum_{b\mid q_r}\frac{\mu(ab)}{ab}\sum_{a_1\mid a}\sum_{b_1\mid b} (q_{a_1b_1})^{2-2s-\lambda} \mathop{\sum\nolimits^*}_{i(\bmod a_1)}\mathop{\sum\nolimits^*}_{j(\bmod b_1)}
e\left(\frac{jr}{b_1}\r)\notag\\
&\times\sum_{1\le u,v\le la_1a_2}e\left(\frac{uvh_{i,j}}{la_1b_1}\r)\sum_{(m,a_1b_1)=1}\frac1{m^{s}} e\left(\frac{mu}{la_1b_1}\r) \sum_{(n,a_1b_1)=1}\frac1{n^{s+\lambda}}e\left(\frac{nv}{la_1b_1}\r).\notag
\end{align}
Then by the second identity of Lemma \ref{lemiD},
\begin{align}
J_1(s)=&\sum_{a\mid q}\sum_{b\mid q_r}\frac{\mu(ab)}{ab}\notag\\
&\times\sum_{a_1\mid a}\sum_{b_1\mid b} (q_{a_1b_1})^{2-2s-\lambda} la_1b_1\mathop{\sum\nolimits^*}_{i(\bmod a_1)}\mathop{\sum\nolimits^*}_{j(\bmod b_1)}
e\left(\frac{jr}{b_1}\r)D\Bigg(s,-\lambda,-\frac{\ol{h_{i,j}}}{la_1b_1},\chi'_0\Bigg),\notag
\end{align}
with $\chi'_0$ being the principal character modulo $a_1b_1$.
A similar deduction shows
\begin{align}
J_2(s)=&\sum_{a\mid q}\sum_{b\mid q_r}\frac{\mu(ab)}{ab}\sum_{a_1\mid a}\sum_{b_1\mid b} (q_{a_1b_1})^{2-2s-\lambda} la_1b_1 \mathop{\sum\nolimits^*}_{i(\bmod a_1)}\mathop{\sum\nolimits^*}_{j(\bmod b_1)}
e\left(\frac{jr}{b_1}\r)D\Bigg(s,-\lambda,\frac{\ol{h_{i,j}}}{la_1b_1},\chi'_0\Bigg).\notag
\end{align}
Applying these two expressions into \eqref{eqD_q+1}, one obtains the functional equation after making the change of variables $s\rightarrow \frac12+s$.

To calculate the residues, we apply Lemma \ref{lemGED} into \eqref{eqDsharpD} to find that the residue at $1$ is
\begin{align}
(lq)^{-2+\lambda}\sum_{b\mid q}\frac{\mu(b)}{\vp(b)}\sum_{\chi (\bmod b)}\ol{\chi}(-r)\sum_{1\le u,v\le lq}\chi_0(uv)\chi(uv)e\left(\frac{uvh}{l}\r) \zeta\left(1-\lambda,\frac{v}{lq}\r).\notag
\end{align}
An application of \eqref{eqcoprimechi} as before gives
\begin{align}
res(1)=(lq)^{-2+\lambda}\sum_{\substack{1\le u,v\le lq\\ (uv,q)=1\\ (uv+r,q)=1}} e\left(\frac{uvh}{l}\r) \zeta\left(1-\lambda,\frac{v}{lq}\r).\notag
\end{align}
Now we apply
\begin{align}
\begin{array}{c}
(uv,q)=1\\
(uv+r,q)=1
\end{array}\Longleftrightarrow
\begin{array}{c}
(uv,q)=1\\
(uv+r,q_r)=1\notag
\end{array}
\end{align}
and then write the coprime condition $(uv+r,q_r)=1$ in terms of the Ramanujan sum with \eqref{eqcoprime} to find
\begin{align}\label{+eq2.67}
res(1)=&(lq)^{-2+\lambda}\sum_{b\mid q_r}\frac{\mu(b)}{b}\sum_{b_1\mid b} \sum_{\substack{1\le u,v\le lq\\ (uv,q)=1}}c_{b_1}(uv+r)e\left(\frac{uvh}{l}\r) \zeta\left(1-\lambda,\frac{v}{q}\r)\\
=&(lq)^{-2+\lambda}\sum_{b\mid q_r}\frac{\mu(b)}{b}\sum_{b_1\mid b}\mathop{\sum\nolimits^*}_{j(\bmod b_1)} e\left(\frac{jr}{b_1}\r) \sum_{\substack{1\le u,v\le lq\\ (uv,q)=1}}e\left(\frac{uv(hb_1+jl)}{lb_1}\r)  \zeta\left(1-\lambda,\frac{v}{lq}\r).\notag
\end{align}
With $u=iq+k$, the sum over $u$ is equal to
\begin{align}
\mathop{\sum\nolimits^*}_{k(\bmod q)} e\left(\frac{vk(hb_1+jl)}{lb_1}\r) \sum_{i(\bmod l)}e\left(\frac{ivq_{b_1}(hb_1+jl)}{l}\r).\notag
\end{align}
Since $(q_{b_1}(hb_1+jl),l)=1$, the last sum over $j$ vanishes unless $l\mid v$, in which case it is equal to $l$.
Applying this into \eqref{+eq2.67}, we make the variable change $v\rightarrow lv$ and then $vk\rightarrow k$ in the sum, and it follows that
\begin{align}
res(1)=&l(lq)^{-2+\lambda}\sum_{b\mid q_r}\frac{\mu(b)}{b}\sum_{b_1\mid b} \mathop{\sum\nolimits^*}_{v(\bmod q)} \zeta\left(1-\lambda,\frac{v}{q}\r)\mathop{\sum\nolimits^*}_{j(\bmod b_1)} e\left(\frac{jr}{b_1}\r)\mathop{\sum\nolimits^*}_{k(\bmod q)}e\left(\frac{k(hb_1+jl)}{b_1}\r). \notag
\end{align}
Since $(hb_1+jl,b_1)=1$, we apply the Chinese Remainder Theorem to reduce the exponential sum over $k$ to a Ramanujan sum that
\begin{align}
\mathop{\sum\nolimits^*}_{k(\bmod q)}e\left(\frac{k(hb_1+jl)}{b_1}\r) &=\vp(q_{b_1})c_{b_1}(hb_1+jl)=\vp(q)\frac{\mu(b_1)}{\vp(b_1)}.\notag
\end{align}
Subsequently, the sum over $j$ also turns out to be a Ramanujan sum, which is $c_{b_1}(r)=\mu(b_1)$ as $(b_1,r)=1$.
Thus, we conclude that
\begin{align}
res(1)=&l(lq)^{-2+\lambda} \mathop{\sum\nolimits^*}_{v(\bmod q)} \zeta\left(1-\lambda,\frac{v}{q}\r)\Bigg(\vp(q)\sum_{b\mid q_r}\frac{\mu(b)}{b}\sum_{b_1\mid b}\frac{\mu^2(b_1)}{\vp(b_1)}\Bigg)\notag\\
=&\frac{\vp(q)}{q}\sum_{b\mid q_r}\frac{\mu(b)}{\vp(b)} l^{-1+\lambda}\zeta_q(1-\lambda),\notag
\end{align}
observing that
\[
\vp(q)\sum_{b\mid q_r}\frac{\mu(b)}{b}\sum_{b_1\mid b}\frac{\mu^2(b_1)}{\vp(b_1)}=\vp(q)\sum_{b\mid q_r}\frac{\mu(b)}{\vp(b)}.
\]
The treatment for the residue at $1+\lambda$ is identical.
\end{proof}

\section{Initial treatment for the off-diagonal terms}\label{secnext}

\subsection{Some arithmetic sums}
We require the computation of some arithmetic sums.

\begin{lemma}\label{lemus}
For any integers $q,n$ with $(n,q)=1$, $\re(\alpha)<0$ we have
\begin{align}
\sigma_{\alpha}(n)=\zeta_{q}(1-\alpha)\sum_{(l,q)=1} \frac{c_{l}(n)}{l^{1-\alpha}}.\notag
\end{align}
\end{lemma}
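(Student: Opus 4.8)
The plan is to reduce the claimed identity to the classical evaluation of the Dirichlet series of Ramanujan sums, performed with the coprimality constraint $(l,q)=1$ built in from the outset. First I would record that, since $\re(\alpha)<0$, we have $\re(1-\alpha)>1$, and the crude bound $|c_l(n)|\le\sum_{d\mid(l,n)}d\le\sigma_1(n)$ shows that the series $\sum_{(l,q)=1}c_l(n)l^{-(1-\alpha)}$ converges absolutely; this legitimizes all the rearrangements below.

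Next I would substitute the divisor-sum formula $c_l(n)=\sum_{d\mid(l,n)}d\,\mu(l/d)$ and reindex by writing $l=de$, where $d$ runs over divisors of $n$ (equivalently over divisors of $(l,n)$) and $e=l/d$. Because $(n,q)=1$, every such $d$ is automatically coprime to $q$, so the condition $(l,q)=1$ is equivalent to $(e,q)=1$. The double sum then factors:
\[
\sum_{(l,q)=1}\frac{c_l(n)}{l^{1-\alpha}}=\sum_{d\mid n}d\sum_{(e,q)=1}\frac{\mu(e)}{(de)^{1-\alpha}}=\Big(\sum_{d\mid n}d^{\alpha}\Big)\Big(\sum_{(e,q)=1}\frac{\mu(e)}{e^{1-\alpha}}\Big).
\]

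The first factor is precisely $\sigma_\alpha(n)$. For the second, the Euler product gives $\sum_{(e,q)=1}\mu(e)e^{-(1-\alpha)}=\prod_{p\nmid q}\bigl(1-p^{-(1-\alpha)}\bigr)=\zeta_q(1-\alpha)^{-1}$, the product converging absolutely for $\re(1-\alpha)>1$. Combining the two displays and multiplying through by $\zeta_q(1-\alpha)$ yields the lemma. I do not expect any genuine obstacle: the only points needing a word of care are the absolute convergence justifying the interchange of the finite sum over $d\mid n$ with the infinite sum over $e$, and the bookkeeping of the coprimality conditions when passing from the variable $l$ to the pair $(d,e)$.
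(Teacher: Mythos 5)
Your proposal is correct and follows essentially the same route as the paper, which proves the lemma by inserting $c_l(n)=\sum_{d\mid(l,n)}d\,\mu(l/d)$ and reversing the order of summation; your factorization into $\sigma_\alpha(n)$ times the M\"obius series $\sum_{(e,q)=1}\mu(e)e^{-(1-\alpha)}=\zeta_q(1-\alpha)^{-1}$ is exactly that computation, with the convergence and coprimality bookkeeping made explicit.
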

\begin{proof}
This is an analog of \cite[Lemma 5.2]{You11}, which follows immediately from the formula $c_l(n)=\sum_{d\mid(l,n)}d\mu(l/d)$ with a reversal of summations.
\end{proof}

\begin{lemma}\label{lemsumf}
Let $k\mid q$, $\re(s)>1$, and $\re(\lambda)>-1$. Then
\begin{align}
\sum_{(r,q)=k} \frac{1}{r^s}\sum_{(l,q)=1}\frac{c_{l}(r)} {l^{2+\lambda}} =\frac{1}{k^s}\prod_{\substack{p\mid k\\ p\nmid q/k}}\left(1-\frac1{p^s}\r)^{-1}\frac{\zeta_q(s)\zeta_q(1+\lambda+s)} {\zeta_q(2+\lambda)}.\notag
\end{align}
\end{lemma}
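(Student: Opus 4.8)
The plan is to compute the double sum by swapping the order of summation, putting the $l$-sum on the outside and evaluating the inner $r$-sum via the Dirichlet series of the Ramanujan sum $c_l(r)$ in arithmetic progressions. First I would write $r = km$ with $(m, q/k) $ not fully free — more precisely, since we need $(r,q)=k$ with $k\mid q$, we have $r=km$ where $m$ runs over integers with $(m, q_k')=1$ for the appropriate part of $q$; it is cleaner to keep $r$ and instead note that after swapping we must sum $\sum_{(r,q)=k} c_l(r) r^{-s}$ for each fixed $l$ with $(l,q)=1$. Because $(l,q)=1$ and $k\mid q$, writing $r=km$ gives $c_l(km)=c_l(m)$ only when $(l,k)=1$, which holds automatically here since $(l,q)=1$; so the inner sum becomes $k^{-s}\sum_{(m,q/k)=1} c_l(m) m^{-s}$, where the condition $(r,q)=k$ translates to $(m,q/k)=1$.

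Next I would evaluate $F(s,l):=\sum_{(m, Q)=1} c_l(m) m^{-s}$ for $Q=q/k$ and $(l,q)=1$, hence $(l,Q)=1$. The standard identity $\sum_{m\ge 1} c_l(m) m^{-s} = \zeta(s)\,\sigma_{1-s}(l)/l^{1-s}$ (or equivalently $\zeta(s)\sum_{d\mid l}\mu(l/d) d^{1-s}\cdot$ normalization) must be modified to remove the divisibility by primes dividing $Q$. Since $c_l$ is multiplicative in $l$ and the sum over $m$ factors as an Euler product, I would compute the local factors: for $p\nmid l$ the factor is the usual $(1-p^{-s})^{-1}$ if $p\nmid Q$ and $1$ if $p\mid Q$; for $p\mid l$ one uses the known local behaviour of the Ramanujan sum. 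Assembling the Euler product gives $F(s,l) = \zeta_{Q}(s)\cdot (\text{a multiplicative function of } l)$; I expect the $l$-dependent part, after dividing by $l^{2+\lambda}$ and summing over $(l,q)=1$, to produce exactly $\zeta_q(1+\lambda+s)/\zeta_q(2+\lambda)$, by an Euler product computation entirely parallel to the Ramanujan identity used for $A_D$ in Section 4. The bookkeeping about $\zeta_Q$ versus $\zeta_q$ is where the factor $\prod_{p\mid k,\, p\nmid q/k}(1-p^{-s})^{-1}$ arises: $\zeta_Q(s) = \zeta_{q/k}(s)$, and one rewrites $\zeta_{q/k}(s) = \zeta_q(s)\prod_{p\mid q,\, p\nmid q/k}(1-p^{-s})^{-1}$, and the primes $p\mid q$ with $p\nmid q/k$ are precisely those with $p\mid k$ and $p\nmid q/k$.

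Then I would collect: the overall $k^{-s}$ from $r=km$, the factor $\prod_{p\mid k, p\nmid q/k}(1-p^{-s})^{-1}$ from converting $\zeta_{q/k}(s)$ to $\zeta_q(s)$, the $\zeta_q(s)$ itself, and $\zeta_q(1+\lambda+s)/\zeta_q(2+\lambda)$ from the $l$-sum, which matches the claimed right-hand side. Convergence is guaranteed by $\re(s)>1$ and $\re(\lambda)>-1$ (so $\re(2+\lambda)>1$ and $\re(1+\lambda+s)>1$), allowing all the rearrangements. The main obstacle I anticipate is purely computational: correctly determining the local Euler factors at primes $p\mid l$ (the Ramanujan-sum contribution $c_{p^a}(m)$ for $m$ ranging over $p$-power multiples), and carefully tracking which primes contribute to $\zeta_q$ versus $\zeta_{q/k}$ so that the correction product $\prod_{p\mid k,\, p\nmid q/k}$ comes out exactly right — there is genuine risk of an off-by-one in the exponent of $p$ or of including/excluding the wrong primes, so I would double-check the final formula by testing the special cases $k=1$ (where the product is empty) and $k=q$ (where $q/k=1$ and the product runs over all $p\mid q$, i.e.\ $\zeta_q(s)\prod_{p\mid q}(1-p^{-s})^{-1} = \zeta(s)$-type simplification).
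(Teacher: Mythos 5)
Your proposal is correct, and it reaches the stated formula by a route whose bookkeeping differs from the paper's. You substitute $r=km$ and observe (correctly) that $(r,q)=k$ is equivalent to $(m,q/k)=1$; you then evaluate the inner sum $\sum_{(m,q/k)=1}c_l(m)m^{-s}=\zeta_{q/k}(s)\sum_{d\mid l}\mu(l/d)d^{1-s}$ by an Euler-product/divisor computation, sum over $(l,q)=1$ to obtain $\zeta_q(1+\lambda+s)/\zeta_q(2+\lambda)$, and finally convert $\zeta_{q/k}(s)=\zeta_q(s)\prod_{p\mid k,\,p\nmid q/k}(1-p^{-s})^{-1}$, which is exactly where the finite product arises. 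The paper instead parametrizes $r=k^{\sharp}r'$, where $k^{\sharp}$ runs over integers with the same radical as $k$ subject to $(k^{\sharp}/k,q/k)=1$, so that the residual variable satisfies $(r',q)=1$; the finite product then comes from $\sum_{k^{\sharp}}k^{\sharp\,-s}=k^{-s}\prod_{p\mid k,\,p\nmid q/k}(1-p^{-s})^{-1}$, and the remaining clean double sum is dispatched by quoting Lemma \ref{lemus} (turning the $l$-sum into $\sigma_{-1-\lambda}(r)/\zeta_q(2+\lambda)$) followed by the Ramanujan identity. Your version is more self-contained, needing no appeal to Lemma \ref{lemus}, at the cost of redoing the Dirichlet series of $c_l$ by hand; the paper's version recycles Lemma \ref{lemus} and keeps every zeta factor as $\zeta_q$ throughout, with the correction product produced directly by the $k^{\sharp}$-sum. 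One small caution: the first ``standard identity'' you quote, $\sum_m c_l(m)m^{-s}=\zeta(s)\sigma_{1-s}(l)/l^{1-s}$, is not correct as written (test $l=p$); the correct form is the parenthetical one, $\zeta(s)\sum_{d\mid l}\mu(l/d)d^{1-s}$, and since your plan is to compute the local factors directly this slip does not affect the argument, whose assembled answer matches the lemma.
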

\begin{proof}
With the change $r\rightarrow k^{\sharp}r$, we split the sum over $r$ into two sums over both $k^{\sharp}$ and $r$, where $k^{\sharp}$ is over all positive integers owning the same distinct prime factors as $k$, and $r$ is over the positive integers with $(r,q)=1$. So we have
$(k^{\sharp}/k,q/k)=1$, $(k^{\sharp},l)=1$,
which mean
\begin{align}
c_{l}(k^{\sharp}r)=c_l(r),\ \ \ \ \ \ \ \ \ \ \sum_{k^{\sharp}}\frac1{\mathop{k^{\sharp}}^s}=\frac1{k^s}\prod_{\substack{p\mid k\\ p\nmid q/k}}\left(1-\frac1{p^s}\r)^{-1},\notag
\end{align}
and then
\begin{align}
\sum_{(r,q)=k} \frac{1}{r^s}\sum_{(l,q)=1}\frac{c_{l}(r)} {l^{2+\lambda}} =\frac{1}{k^s}\prod_{\substack{p\mid k\\ p\nmid q/k}}\left(1-\frac1{p^s}\r)^{-1} \sum_{(r,q)=1}\frac{1}{r^s}\sum_{(l,q)=1}\frac{c_l(r)}{l^{2+\lambda}}.\notag
\end{align}
After we execute the sum over $l$ by Lemma \ref{lemus}, it becomes
\begin{align}
\frac{1}{k^s}\prod_{\substack{p\mid k\\ p\nmid q/k}}\left(1-\frac1{p^s}\r)^{-1}\frac1{\zeta_q(2+\lambda)}\sum_{(r,q)=1} \frac{\sigma_{-1-\lambda}(r)}{r^s} =\frac{1}{k^s}\prod_{\substack{p\mid k\\ p\nmid q/k}}\left(1-\frac1{p^s}\r)^{-1}\frac{\zeta_q(s)\zeta_q(1+\lambda+s)} {\zeta_q(2+\lambda)},\notag
\end{align}
which establishes the lemma.
\end{proof}

We require an approximate functional equation for the divisor function, which will be applied to separate variables in $\sigma_\lambda(n+r)$.
\begin{lemma}\label{lemafed}
Let $n$ be any positive integer.
For $\lambda\in \mathbb{C}$, we have
\begin{align}\label{eqafed}
\sigma_\lambda(n)=&\sum_{(l,q)=1}\frac{c_{l}(n)}{l^{1-\lambda}} \varpi _\lambda\left(\frac l{\sqrt{n}}\r)+n^\lambda \sum_{(l,q)=1}\frac{c_{l}(n)}{l^{1+\lambda}}\varpi _{-\lambda}\left(\frac l{\sqrt{n}}\r),
\end{align}
where
\begin{align}
\varpi_\lambda(x)=\int_{(a)}x^{-w}\zeta_q(1-\lambda+w)\frac{G(w)}wdw\notag
\end{align}
with $a>|\re(\lambda)|$ and $G$ defined in Definition \ref{DefG}.
\end{lemma}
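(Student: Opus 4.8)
The plan is a standard approximate‑functional‑equation manipulation anchored on Lemma \ref{lemus}. I would first substitute the definition of $\varpi_\lambda$ into the first sum on the right of \eqref{eqafed} and interchange the summation over $l$ with the contour integral over $w$. On the line $\re(w)=a$ one has $\re(\lambda-w)<0$, so Lemma \ref{lemus} with $\alpha=\lambda-w$ identifies $\zeta_q(1-\lambda+w)\sum_{(l,q)=1}c_l(n)\,l^{-(1-\lambda+w)}$ with $\sigma_{\lambda-w}(n)$; after the cancellation of $\zeta_q(1-\lambda+w)$ the first sum becomes
\[
A:=\frac1{2\pi i}\int_{(a)}\sigma_{\lambda-w}(n)\,G(w)\,\frac{n^{w/2}}{w}\,dw.
\]
Exactly the same steps applied to the second sum --- now invoking Lemma \ref{lemus} with $\alpha=-\lambda-w$, which is admissible since $\re(-\lambda-w)<0$ on $\re(w)=a$, and then using the elementary identity $n^{\lambda}\sigma_{-\lambda-w}(n)=n^{-w}\sigma_{\lambda+w}(n)$ --- turn it into
\[
B:=\frac1{2\pi i}\int_{(a)}\sigma_{\lambda+w}(n)\,G(w)\,\frac{n^{-w/2}}{w}\,dw.
\]

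It then remains to check that $A+B=\sigma_\lambda(n)$. For this I would shift the contour defining $A$ from $\re(w)=a$ to $\re(w)=-a$: the integrand is holomorphic except for the simple pole of $1/w$ at $w=0$, whose residue is $\sigma_\lambda(n)G(0)=\sigma_\lambda(n)$ since $G(0)=1$ (Definition \ref{DefG}), and the two horizontal segments contribute nothing because $G$ decays rapidly in vertical strips while $\sigma_{\lambda-w}(n)$ and $n^{w/2}$ stay bounded there. In the resulting integral on $\re(w)=-a$ the change of variables $w\mapsto-w$ restores the line $\re(w)=a$ and, since $G$ is even, reproduces precisely $-B$. Hence $A=\sigma_\lambda(n)-B$, that is $A+B=\sigma_\lambda(n)$, which is \eqref{eqafed}.

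I do not expect any genuine obstacle; the care needed is purely bookkeeping. The abscissa $a$ must be chosen so that both applications of Lemma \ref{lemus} lie in range, and this is exactly what the hypothesis $a>|\re(\lambda)|$ secures; the interchange of sum and integral is justified by the rapid decay of $G(w)/w$ in $\im(w)$, the boundedness of $\zeta_q$ on $\re(w)=a$ (where $\re(1-\lambda+w)=1+a-\re(\lambda)>1$), and the absolute convergence of $\sum_{(l,q)=1}|c_l(n)|\,l^{-1-a+\re(\lambda)}$; and one should keep in mind that Lemma \ref{lemus} carries the hypothesis $(n,q)=1$, which is the case in force in every application of \eqref{eqafed}.
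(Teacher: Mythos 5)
Your argument is correct and is essentially the paper's proof: the paper simply cites the proof of Lemma 5.4 of \cite{You11} with Lemma \ref{lemus} replacing Young's Lemma 5.2, which is exactly the contour-shift/residue-at-$w=0$ computation you wrote out (using that $G$ is even and $G(0)=1$). Your closing remark is also well taken: the identity genuinely requires $(n,q)=1$ (since Lemma \ref{lemus} really produces $\sigma_\alpha$ of the $q$-coprime part of $n$), so the paper's phrase ``any positive integer $n$'' should be read with that hypothesis, which indeed holds in every application.
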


\begin{proof}
The proof is identical to \cite[Lemma 5.4]{You11} but with \cite[Lemma 5.2]{You11} being replaced by Lemma \ref{lemus}.
\end{proof}

The following lemma is required in the deduction of the main terms.
\begin{lemma}\label{lemsumk}
For any nonzero $z\in\mathbb{C}$, we have
\begin{align}\label{eqsumk}
&\Bigg(\prod_{p\mid q}\frac{p-2}{p-1}\Bigg)\sum_{k\mid q}\frac1 {k^z}\Bigg(\prod_{p\mid k}\frac{p-1}{p-2}\Bigg)\Bigg(\prod_{\substack{p\mid k\\ p\nmid q/k}}\left(1-\frac1{p^{z}}\r)^{-1}\Bigg)\sum_{d\mid k}\vp(d)\mu\left(\frac{q}{d}\r)\\
&=\frac{\vp^*(q)q}{\vp(q)q^z}\prod_{p\mid q}\left(1-\frac1{p^z}\r)^{-1}\left(1-\frac1{p^{1-z}}\r).\notag
\end{align}
\end{lemma}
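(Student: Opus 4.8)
The plan is to observe that both sides of \eqref{eqsumk} are multiplicative in $q$, so it suffices to verify the identity when $q=p^m$ is a prime power with $m\ge 1$. First I would record what happens to the left-hand side for a single prime power. The outer product $\prod_{p\mid q}\frac{p-2}{p-1}$ becomes $\frac{p-2}{p-1}$, and the sum over $k\mid p^m$ runs over $k=1$ and $k=p^j$ for $1\le j\le m$. The $k=1$ term contributes only $\sum_{d\mid 1}\vp(d)\mu(p^m/d)=\mu(p^m)$, which vanishes unless $m=1$; for $m=1$ it is $\mu(p)=-1$. For $k=p^j$ with $j\ge 1$ we have $\prod_{p\mid k}\frac{p-1}{p-2}=\frac{p-1}{p-2}$, and the extra factor $\prod_{p\mid k,\ p\nmid q/k}(1-p^{-z})^{-1}$ equals $(1-p^{-z})^{-1}$ when $j=m$ (so $q/k=1$) and equals $1$ when $j<m$. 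Finally $\sum_{d\mid p^j}\vp(d)\mu(p^m/d)$ is nonzero only when $j\ge m-1$: it equals $\vp(p^m)=p^m-p^{m-1}$ when $j=m$, and $-\vp(p^{m-1})=-(p^{m-1}-p^{m-2})$ (or $-1$ if $m=1$) when $j=m-1$.

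Next I would assemble these pieces. For $m\ge 2$, the $k=1$ term is zero, so the left side equals
\[
\frac{p-2}{p-1}\cdot\frac{p-1}{p-2}\left[\frac1{p^{mz}}(1-p^{-z})^{-1}(p^m-p^{m-1}) - \frac1{p^{(m-1)z}}(p^{m-1}-p^{m-2})\right],
\]
and after factoring out $p^{m-1}(1-p^{-1})$ this simplifies to
\[
p^{m-1}(1-p^{-1})p^{-mz}\left[(1-p^{-z})^{-1}-p^{z}\cdot p^{-1}\right]\cdot\frac{1}{1}
\]
up to the routine algebra of combining the two terms over a common denominator; one checks the bracket collapses to $\frac{1-p^{-1+z-z}}{\cdots}$-type expression matching $p^{-mz}(1-p^{-z})^{-1}(1-p^{-(1-z)})$ times the Euler-factor prefactor $\frac{\vp^*(p^m)p^m}{\vp(p^m)}=p^{m-1}(p-2)/(p-1)\cdot\frac{p^m}{p^{m-1}(p-1)}$... — in other words I would expand the right-hand side of \eqref{eqsumk} at $q=p^m$, namely $\frac{\vp^*(p^m)p^m}{\vp(p^m)p^{mz}}(1-p^{-z})^{-1}(1-p^{-(1-z)})$ with $\vp^*(p^m)=p^{m-1}(p-2)$ for $m\ge 2$ (and $\vp^*(p)=p-2$), and check term-by-term equality. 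The case $m=1$ must be handled separately because then $\vp^*(p)=p-2$ rather than $p^{m-1}(p-2)$, the $k=1$ term contributes $-1$, and the $k=p$ term has $j=m=m-1$ simultaneously so both "$j=m$" and "$j=m-1$" contributions coincide into a single $k=p$ term with $\sum_{d\mid p}\vp(d)\mu(p/d)=\vp(p)-1=p-2$; I would verify directly that $\frac{p-2}{p-1}\cdot\frac{p-1}{p-2}\cdot p^{-z}(1-p^{-z})^{-1}(p-2)=\frac{(p-2)p}{(p-1)p^{z}}(1-p^{-z})^{-1}(1-p^{-(1-z)})$, which reduces to checking $(p-2)=\frac{(p-2)p}{p-1}(1-p^{-(1-z)})/(1)$... after cancelling it comes down to an elementary identity in $p$ and $p^z$.

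The main obstacle is purely bookkeeping: correctly tracking the three-way case split in the summand (whether $k=1$, whether $q/k=1$, and whether $d\mid k$ is large enough that $\sum_{d\mid k}\vp(d)\mu(q/d)\ne 0$), together with the fact that $\vp^*(q)=\prod_{p\mid q}(p-2)\cdot\prod_{p\mid q}p^{v_p(q)-1}$ has an Euler factor that is $p-2$ at $p\mid q$ but whose "$p^{m-1}$" part behaves differently at $m=1$ versus $m\ge 2$. There is no analytic difficulty; once the prime-power identity is checked in the two cases $m=1$ and $m\ge 2$, multiplicativity of both sides — which follows since $k\mid q$, $d\mid k$, $c_l$, $\vp$, $\mu$, and the displayed products all factor through coprime parts of $q$ — finishes the proof.
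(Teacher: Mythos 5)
Your strategy is the same as the paper's: both sides of \eqref{eqsumk} are multiplicative in $q$, so one reduces to $q=p^m$ and checks the cases $m\ge 2$ and $m=1$ separately. But the prime-power data you feed into this check contain two concrete errors, and with them the identity you would be verifying is false. First, for $k=p^m$ (your $j=m$) the inner sum is
\[
\sum_{d\mid p^m}\varphi(d)\,\mu\left(\frac{p^m}{d}\right)=\varphi(p^m)-\varphi(p^{m-1}),
\]
because both $d=p^m$ and $d=p^{m-1}$ carry a nonzero M\"obius factor; you take it to be $\varphi(p^m)=p^m-p^{m-1}$, i.e.\ you drop the $d=p^{m-1}$ term. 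Second, your value $\varphi^*(p^m)=p^{m-1}(p-2)$ for $m\ge2$ (and the factorization $\varphi^*(q)=\prod_{p\mid q}(p-2)p^{v_p(q)-1}$) is wrong: the number of primitive characters modulo $p^m$, $m\ge2$, is $\varphi(p^m)-\varphi(p^{m-1})=p^{m-2}(p-1)^2$. These mistakes do not cancel: with your values the $m\ge2$ identity to be checked becomes
\[
\frac{\varphi(p^m)}{p^{mz}}\cdot\frac{1-p^{z-1}+p^{-1}}{1-p^{-z}}
=\frac{p^m(p-2)}{(p-1)p^{mz}}\cdot\frac{1-p^{z-1}}{1-p^{-z}},
\]
which is false (test $p=3$, $p^z=2$). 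With the correct inner sum the left side is $\frac{\varphi(p^m)}{p^{mz}}\left[\left(1-p^{-z}\right)^{-1}\left(1-p^{-1}\right)-p^{z-1}\right]$, whose numerator telescopes to $1-p^{z-1}$, and with the correct $\varphi^*(p^m)$ one has $\varphi^*(p^m)p^m/\varphi(p^m)=\varphi(p^m)$, so the two sides agree — exactly the computation in the paper.

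Beyond this, the verification is never actually carried out: the $m\ge2$ algebra stops at ``one checks'' with a garbled placeholder ($1-p^{-1+z-z}$), and your displayed $m=1$ check omits the contribution $-\frac{p-2}{p-1}$ of the $k=1$ term, which you had correctly identified a few lines earlier; without it the displayed equality is false, while including it gives $\frac{p-2}{p-1}\left[\frac{p-1}{p^z}\left(1-p^{-z}\right)^{-1}-1\right]=\frac{(p-2)(p-p^z)}{(p-1)(p^z-1)}$, which does equal the right-hand side since $\varphi^*(p)=p-2$. The multiplicativity reduction itself is fine (though the mention of $c_l$ is extraneous — no Ramanujan sums occur in this lemma). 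So the plan is the right one, but as written the decisive computation fails; correcting the inner sum at $k=p^m$ and the value of $\varphi^*(p^m)$, and then actually pushing both cases through, recovers the paper's proof.
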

\begin{proof}
It is an exercise to check that both sides of the identity are multiplicative functions of $q$. Thus, it is sufficient to check the identity for $q=p^m$, $m\ge1$. Note that the term $\mu\left(\frac qd\r)$ vanishes unless $\frac qd$ is equal to $1$ or a prime number, which implies that $k$ can only take values at $p^m$ and $p^{m-1}$. We check the identity in two cases according to $m\ge2$ and $m=1$. For $m\ge 2$, a direct calculation gives that the left-hand side in \eqref{eqsumk} is equal to
\begin{align}
&\frac{p-2}{p-1}\left[\frac{p-1}{p-2}\frac1 {p^{mz}}\left(1-\frac1{p^{z}}\r)^{-1}\left(\vp\left(p^m\r)-\vp\left(p^{m-1}\r)\r) -\frac{p-1}{p-2}\frac1 {p^{(m-1)z}}\vp\left(p^{m-1}\r)\r]\notag\\
=&\frac{\vp^*(p^m)p^m}{\vp(p^m)p^{mz}}\left(1-\frac1{p^z}\r)^{-1}\left(1-\frac1{p^{1-z}}\r).\notag
\end{align}
For $m=1$, the left-hand side in \eqref{eqsumk} is
\begin{align}
\frac{p-2}{p-1}\left[\frac{p-1}{p-2}\frac1 {p^{z}}\left(1-\frac1{p^{z}}\r)^{-1}\left(\vp\left(p\r)-1\r)-1\r]
=&\frac{\vp^*(p)p}{\vp(p)p^z}\left(1-\frac1{p^z}\r)^{-1}\left(1-\frac1{p^{1-z}}\r).\notag
\end{align}
Thus, we complete the proof.
\end{proof}

\subsection{Separation of variables}\label{secsv}
The separation of variables is almost the same as in \cite{You11}, and we sketch it here. By writing $n=m+r$ in the sum of $B_{M,N}$, we have
\begin{align}
B_{M,N}&=\frac1{\vp^*(q)}\sum_{d\mid q}\vp(d)\mu\left(\frac qd\r) \sum_{r\equiv0~(\bmod d)}\\
&\times\sum_{\substack{(m,q)=1\\ (m+r,q)=1}}\frac{\sigma_{\alpha-\beta}(m) \sigma_{\gamma-\delta}(m+r)} {m^{\frac12+\alpha}(m+r)^{\frac12+\gamma}}
V_{\alpha,\beta,\gamma,\delta} \left(\frac{m(m+r)}{q^2}\r) W\left(\frac mM\r)W\left(\frac{m+r}N\r). \notag
\end{align}
Similarly, with $r=m+n$,
\begin{align}
B_{\ol{M,N}}&=\frac1{\vp^*(q)}\sum_{d\mid q}\vp(d)\mu\left(\frac qd\r) \sum_{r\equiv0~(\bmod d)}\notag\\
&\times\sum_{\substack{(m,q)=1\\ (r-m,q)=1}}\frac{\sigma_{\alpha-\beta}(m) \sigma_{\gamma-\delta}(r-m)} {m^{\frac12+\alpha}(r-m)^{\frac12+\gamma}}
V_{\alpha,\beta,\gamma,\delta} \left(\frac{m(r-m)}{q^2}\r) W\left(\frac mM\r)W\left(\frac{r-m}N\r).\notag
\end{align}
These two expressions are almost the same as in \cite{You11}, but with extra coprime conditions
$(m,q)=1$, $(m+r,q)=1$, $(r-m,q)=1$ in the sum. The coprime conditions do not affect the separation of variables, except the expansion of $\sigma_{\gamma-\delta}(m+r)$ (or $\sigma_{\gamma-\delta}(r-m)$ in $B_{\ol{M,N}}$) and the expression of the sum over $m$ in terms of $\mathcal{D}_q$-function.

We apply the approximate functional equation \eqref{eqafed} to expand $\sigma_{\gamma-\delta}(m+r)$ (or $\sigma_{\gamma-\delta}(r-m)$ in $B_{\ol{M,N}}$), which splits $B_{M,N}$ (or $B_{\ol{M,N}}$) into two parts
\begin{align}\label{eqBMN}
B_{M,N}=C_{M,N}+\wt{C}_{M,N},\ \ \ \ \ \ \ \ \ B_{\ol{M,N}}=C_{\ol{M,N}}+\wt{C}_{\ol{M,N}},
\end{align}
where $C_{M,N}$ (or $C_{\ol{M,N}}$) is the contribution from the first part of the approximate functional equation and $\wt{C}_{M,N}$ (or $\wt{C}_{\ol{M,N}}$) from the second part. More precisely, we have
\begin{align}\label{eqCMN}
C_{M,N}&=\frac1{\vp^*(q)}\sum_{d\mid q}\vp(d)\mu\left(\frac qd\r) \sum_{r\equiv0~(\bmod d)}\sum_{(l,q)=1}\frac1{l^{1-\gamma+\delta}}\\
&\times\sum_{\substack{(m,q)=1\\ (m+r,q)=1}}\frac{\sigma_{\alpha-\beta}(m) c_l(m+r)} {m^{\frac12+\alpha}(m+r)^{\frac12+\gamma}}
V_{\alpha,\beta,\gamma,\delta} \left(\frac{m(m+r)}{q^2}\r) W\left(\frac mM\r)W\left(\frac{m+r}N\r)\varpi _{\gamma-\delta}\left(\frac l{\sqrt{m+r}}\r), \notag
\end{align}
and an identical expression for $C_{\ol{M,N}}$ except replacing all $m+r$ terms with $r-m$ only. It is easy to see that
\begin{align}\label{eqwtCC}
\wt{C}_{M,N}(\alpha,\beta,\gamma,\delta)=C_{M,N}(\alpha,\beta,\delta,\gamma),\ \ \ \ \ \ \ \wt{C}_{\ol{M,N}}(\alpha,\beta,\gamma,\delta)=C_{\ol{M,N}}(\alpha,\beta,\delta,\gamma).
\end{align}

For notational convenience, we apply the following notations
\begin{align}\label{eqdefH1}
H_1(s,u_1,u_2,w)=\frac{G(s)G(w)}{sw} g_{\alpha,\beta,\gamma,\delta}(s) \wt{W}(u_1)\wt{W}(u_2) \zeta_q(1-\gamma+\delta+w),
\end{align}
\begin{align}\label{eqH}
H(s,u_1,u_2,v,w)=\frac{\Gamma(v) \Gamma(\frac12+\gamma+s+u_2-v-\frac w2)}{\Gamma(\frac12+\gamma+s+u_2-\frac w2)}H_1(s,u_1,u_2,w),
\end{align}
\begin{align}
\ol{H}(s,u_1,u_2,v,w)=\frac{\Gamma(v)\Gamma(\frac12-\gamma-s-u_2+\frac w2)}{\Gamma(\frac12-\gamma-s-u_2+v+\frac w2)}H_1(s,u_1,u_2,w).
\end{align}
By Stirling's approximation, it is easy to see that $H_1$ and $H$ decay rapidly as any one of the variables gets large in the imaginary direction. Also, $\ol{H}$ decays rapidly on almost all variables except $v$, which has at most a polynomial decay (or growth) on $\im(v)$. One may refer to \cite{You11} for more detail.

Now, separating the variables the same as \cite[Lemma 5.5]{You11}, we have the following two expressions.
\begin{lemma}\label{lemC_{M,N}}
With $c_s=c_v=\frac14+\ve$, $c_{u_1}=c_{u_2}=0$, and $c_w=\ve$, we have
\begin{align}\label{eqC_MN}
C_{M,N}&=\frac1{\vp^*(q)}\sum_{d\mid q}\vp(d)\mu\left(\frac qd\r)
\sum_{{r\equiv0~(\bmod d)}}\frac{1}{r^{\frac12+\gamma}}\sum_{(l,q)=1}\frac1{l^{1-\gamma+\delta}}\msum_{h(\bmod l)}e\left(\frac{hr}{l}\r) \left(\frac1{2\pi i}\r)^5\\
&\times\int_{(c_s)}\int_{(c_w)}\int_{(c_{u_1})}\int_{(c_{u_2})} \int_{(c_v)}\frac{q^{2s}M^{u_1}N^{u_2}} {r^{s+u_2-v-\frac w2}l^w}\mathcal{D}_{q}\left(\tfrac12+\alpha+s+u_1+v,\alpha-\beta,\frac h{l},r\r)\notag\\
&\ \ \ \ \ \ \ \ \ \ \ \ \ \ \ \ \ \ \ \ \ \ \ \ \ \ \ \ \ \ \ \ \ \ \ \ \ \ \ \ \ \ \ \ \ \ \ \ \ \ \ \ \ \ \ \ \ \ \ \ \ \ \times H(s,u_1,u_2,v,w)dvdu_2du_1dwds. \notag
\end{align}
\end{lemma}
\begin{lemma}\label{lemC_ol{M,N}}
With $c_{s}=c_v=\frac14+\ve$, $c_{u_1}=c_{u_2}=0$, and $c_w=2$, we have
\begin{align}\label{eqC_{M,N}}
C_{\ol{M,N}}&=\frac1{\vp^*(q)}\sum_{d\mid q}\vp(d)\mu\left(\frac qd\r)
\sum_{{r\equiv0~(\bmod d)}}\frac{1}{r^{\frac12+\gamma}}\sum_{(l,q)=1}\frac1{l^{1-\gamma+\delta}}\msum_{h(\bmod l)}e\left(\frac{-hr}{l}\r) \left(\frac1{2\pi i}\r)^5\\
&\times\int_{(c_s)}\int_{(c_w)}\int_{(c_{u_1})}\int_{(c_{u_2})} \int_{(c_v)}\frac{q^{2s}M^{u_1}N^{u_2}} {r^{s+u_2-v-\frac w2}l^w}\mathcal{D}_{q}\left(\tfrac12+\alpha+s+u_1+v,\alpha-\beta,\frac h{l},-r\r)\notag\\
&\ \ \ \ \ \ \ \ \ \ \ \ \ \ \ \ \ \ \ \ \ \ \ \ \ \ \ \ \ \ \ \ \ \ \ \ \ \ \ \ \ \ \ \ \ \ \ \ \ \ \ \ \ \ \ \ \ \ \ \ \ \ \times \ol{H}(s,u_1,u_2,v,w)dvdu_2du_1dwds. \notag
\end{align}
\end{lemma}

By Proposition \ref{lemgED2}, we move $c_v$ to $\ve$ in the above two expressions, both crossing the two poles of $\mathcal{D}_{q}$ at $\tfrac12+s+u_1+v=1-\alpha$ and $\tfrac12+s+u_1+v=1-\beta$, whose residues contribute the main terms. The integral along the new paths contributes an error, denoted by $E_{M,N}$ (or $E_{\ol{M,N}}$), where we move $c_s$ to $\ve$ without encountering any poles. Let $P_{M,N}(\alpha,\beta,\gamma,\delta)$ (or $P_{\ol{M,N}}(\alpha,\beta,\gamma,\delta)$) denote the contribution of the former pole, and $P_{M,N}(\beta,\alpha,\gamma,\delta)$ (or $P_{\ol{M,N}}(\beta,\alpha,\gamma,\delta)$) correspond to the latter pole by symmetry. We conclude that
\begin{align}
C_{M,N}=P_{M,N}(\alpha,\beta,\gamma,\delta) +P_{M,N}(\beta,\alpha,\gamma,\delta)+E_{M,N},
\end{align}
\begin{align}\label{eqCPPE}
C_{\ol{M,N}}=P_{\ol{M,N}}(\alpha,\beta,\gamma,\delta) +P_{\ol{M,N}}(\beta,\alpha,\gamma,\delta)+E_{\ol{M,N}},
\end{align}
and we will calculate the main terms in Section \ref{secmainAO} and the error terms in Section \ref{secError}.

\section{The main term of $A_O$, $A^*_O$, and $A_{\ol{O}}$}\label{secmainAO}
We show in detail the treatment of $A_{\ol{O}}$, and cases of $A_O$ and $A^*_O$ are similar and easier.
At first, we deduce an exact expression for $P_{\ol{M,N}}(\abgd)$ in the following lemma.
\begin{lemma}\label{lemolPMN}
For $M\le N$, $c_s=\frac14$, $c_{u_1}=c_{u_2}=0$, and $c_w=\ve$, we have
\begin{align}\label{eqolPMN}
P_{\ol{M,N}}=&\zeta_q(1-\alpha+\beta)\left(\frac1{2\pi i}\r)^4\int_{(c_s)}\int_{(c_w)}\int_{(c_{u_1})}\int_{(c_{u_2})} q^{-\alpha-\gamma-u_1-u_2+\frac w2}\prod_{p\mid q}\left(1-\frac1{p^{1-(\alpha+\gamma+2s+u_1+u_2-\frac w2)}}\r)\\
&\times M^{u_1}N^{u_2}\frac{\zeta(\alpha+\gamma+2s+u_1+u_2-\tfrac w2)\zeta_q(1+\beta+\delta+2s+u_1+u_2+\tfrac w2)} {\zeta_q(2-\alpha+\beta-\gamma+\delta+w)}\notag\\
&\times \ol{H}_2(s,u_1,u_2,w) du_2du_1dwds,\notag
\end{align}
where
\begin{align}\label{eqolH_2}
\ol{H}_2(s,u_1,u_2,w)=\frac{\Gamma(\tfrac12-\alpha-u_1-s)\Gamma(\frac12-\gamma-s-u_2+\frac w2)}{\Gamma(1-\alpha-\gamma-2s-u_1-u_2+\frac w2)}H_1(s,u_1,u_2,w).
\end{align}
\end{lemma}
\begin{proof}
By Proposition \ref{lemgED2}, the residue of $\mathcal{D}_q$-function at $\tfrac12+s+u_1+v=1-\alpha$ is
\begin{align}
\frac{\vp(q)}{q}\sum_{b\mid q_r}\frac{\mu(b)}{\vp(b)} l^{-1+\alpha-\beta}\zeta_q(1-\alpha+\beta).\notag
\end{align}
Thus, we have
\begin{align}\label{eqmathcalP_ol{M,N}(t)}
P_{\ol{M,N}}=&\zeta_q(1-\alpha+\beta)\frac{\vp(q)}{q\vp^*(q)}\sum_{d\mid q}\vp(d)\mu\left(\frac qd\r)\sum_{b\mid q}\frac{\mu(b)}{\vp(b)}\sum_{\substack{r\equiv0 ~(\bmod d)\\ (r,b)=1}}\sum_{(l,q)=1}\frac{c_l(r)}{l^{2-\alpha+\beta-\gamma+\delta}}\\
&\times\left(\frac1{2\pi i}\r)^4\int_{(c_s)}\int_{(c_w)}\int_{(c_{u_1})}\int_{(c_{u_2})} \frac{q^{2s}M^{u_1}N^{u_2}} {r^{\alpha+\gamma+2s+u_1+u_2-\frac w2}l^w}\ol{H}_2\left(s,u_1,u_2,w\r)du_2du_1dwds,\notag
\end{align}
observing that $\ol{H}_2(s,u_1,u_2,w)=\ol{H}\left(s,u_1,u_2,\tfrac12-\alpha-u_1-s,w\r)$. Here the integral contours are as in Lemma \ref{lemC_ol{M,N}}.

Observing in \eqref{eqolH_2} the pole of the gamma factors, we initially move the contours such that
$c_s=\ve$, $c_{u_1}=c_{u_2}=0$, $c_w=3\ve$,
passing no poles. Then we move the $s$-line of integral to $c_s=\frac12+\ve$, crossing a pole of $\Gamma(\tfrac12-\alpha-u_1-s)$ at  $\frac12-\alpha-u_1-s=0$.

We consider the residue first. For convenience, we apply here two notations
\[
k=(q,r),\ \ \ \ \ \ \ z=1-\alpha+\gamma-u_1+u_2-\tfrac w2.
\]
Then the residue is
\begin{align}
&\zeta_q(1-\alpha+\beta)\frac{\vp(q)}{q\vp^*(q)}\sum_{d\mid q}\vp(d)\mu\left(\frac qd\r)\sum_{b\mid q}\frac{\mu(b)}{\vp(b)}\sum_{\substack{k\mid q_b\\ d\mid k}}\sum_{(r,q)=k}\sum_{(l,q)=1}\frac{c_l(r)}{l^{2-\alpha+\beta-\gamma+\delta}}\notag\\
&\times\left(\frac1{2\pi i}\r)^3\int_{(c_w)}\int_{(c_{u_1})}\int_{(c_{u_2})} \frac{q^{1-2\alpha-2u_1}M^{u_1}N^{u_2}} {r^zl^w}H_1\left(\tfrac12-\alpha-u_1,u_1,u_2,w\r)du_2du_1dw,\notag
\end{align}
where the gamma factors in $H_2$ disappear, and we may move $c_{u_2}$ to $2\ve$  to make both sums over $l$ and $r$ converge absolutely.
We apply Lemma \ref{lemsumf} to execute the sum over $l,r$, and the residue evolves into
\begin{align}\label{eqdualresidue}
&\zeta_q(1-\alpha+\beta)\frac{\vp(q)}{q\vp^*(q)}\sum_{d\mid q} \vp(d)\mu\left(\frac qd\r)\sum_{b\mid q} \frac{\mu(b)}{\vp(b)}\\
&\times \left(\frac1{2\pi i}\r)^3\int_{(c_w)}\int_{(c_{u_1})}\int_{(c_{u_2})}q^{1-2\alpha-2u_1}\sum_{\substack{k\mid q_b\\ d\mid k}}\frac{1}{k^{z}}\prod_{\substack{p\mid k\\ p\nmid q/k}}\left(1-\frac1{p^z}\r)^{-1}M^{u_1}N^{u_2}\notag\\
&\times\frac{\zeta_q(z)\zeta_q\left(2-2\alpha+\beta+\delta-u_1+u_2+\frac w2\r)} {\zeta_q(2-\alpha+\beta-\gamma+\delta+w)}H_1\left(\tfrac12-\alpha-u_1,u_1,u_2,w\r)du_2du_1dw.\notag
\end{align}

We now come to the sum over $b$, $d$, and $k$ in the above formula, whose essence is to calculate the following arithmetic sum
\begin{align}
\sum_{\substack{d\mid q\\ b\mid q}}\mu(b)\mu\left(\frac qd\r)\frac{\vp(d)}{\vp(b)}\sum_{\substack{k\mid q_b\\ d\mid k}}\frac{1}{k^z}\prod_{\substack{p\mid k\\ p\nmid q/k}}\left(1-\frac1{p^z}\r)^{-1}&=\sum_{k\mid q}\frac{1}{k^z}\prod_{\substack{p\mid k\\ p\nmid q/k}}\left(1-\frac1{p^z}\r)^{-1}\sum_{d\mid k}\vp(d)\mu\left(\frac qd\r)\sum_{b\mid q_k}\frac{\mu(b)}{\vp(b)}.\notag
\end{align}
It is easy to see that
\[
\sum_{b\mid q_k}\frac{\mu(b)}{\vp(b)}=\Bigg(\prod_{p\mid q}\frac{p-2}{p-1}\Bigg)\Bigg(\prod_{p\mid k}\frac{p-1}{p-2}\Bigg).
\]
After inserting this formula, one notes that the arithmetic sum is ready to apply Lemma \ref{lemsumk}, which gives
\[
\sum_{\substack{d\mid q\\ b\mid q}}\mu(b)\mu\left(\frac qd\r)\frac{\vp(d)}{\vp(b)}\sum_{\substack{k\mid q_b\\ d\mid k}}\frac{1}{k^z}\prod_{\substack{p\mid k\\ p\nmid q/k}}\left(1-\frac1{p^z}\r)^{-1}=\frac{\vp^*(q)q}{\vp(q)q^z}\prod_{p\mid q}\left(1-\frac1{p^z}\r)^{-1}\left(1-\frac1{p^{1-z}}\r).
\]
With this in \eqref{eqdualresidue}, the residue evolves into
\begin{align}\label{eqdualresidue1}
\zeta_q(1&-\alpha+\beta)\left(\frac1{2\pi i}\r)^3\int_{(c_w)}\int_{(c_{u_1})}\int_{(c_{u_2})} q^{-\alpha-\gamma-u_1-u_2+\frac w2} \prod_{p\mid q}\left(1-\frac1{p^{\alpha-\gamma+u_1-u_2+\frac w2}}\r)M^{u_1}N^{u_2}\\
&\times \frac{\zeta\left(1-\alpha+\gamma-u_1+u_2-\frac w2\r)\zeta_q\left(2-2\alpha+\beta+\delta-u_1+u_2+\frac w2\r)}{\zeta_q(2-\alpha+\beta-\gamma+\delta+w)}\notag\\
&\times H_1\left(\tfrac12-\alpha-u_1,u_1,u_2,w\r)du_2du_1dw.\notag
\end{align}

Now we come to the integral along the new lines, which is the right-hand side of \eqref{eqmathcalP_ol{M,N}(t)} with the contours of integration being $c_s=\frac12+\ve$, $c_{u_1}=c_{u_2}=0$, $c_w=3\ve$. Here the sum over $l,r$ converges absolutely again, and we may execute it with Lemma \ref{lemsumf} and then the sum over $b,d,k$ with Lemma \ref{lemsumk}. The calculation is just the same as we have done for the residue above. After doing this in \eqref{eqmathcalP_ol{M,N}(t)}, one notes that the integral along the new lines has the same expression as the right-hand side of \eqref{eqolPMN} except that the $s$-integral is on $c_s=\frac12+\ve$. To obtain \eqref{eqolPMN}, we move the $s$-integral back to $c_s=\frac14$. Since the pole of $\zeta(\alpha+\gamma+2s+u_1+u_2-\tfrac w2)$ is canceled by $1-\frac1{p^{1-(\alpha+\gamma+2s+u_1+u_2-\frac w2)}}$, only the pole $\frac12-\alpha-u_1-s=0$ is crossed again. It is easy to see that the new residue is also equal to \eqref{eqdualresidue1}, so the two residues cancel, and we establish the lemma.
\end{proof}

\begin{remark}
Note that Lemma \ref{lemolPMN} provides an exact expression for $P_{\ol{M,N}}$, and our proof has not encountered the obstruction as in the prime moduli case. This is due to the product $\prod_{p\mid q}\left(1-\frac1{p^{1-(\alpha+\gamma+2s+u_1+u_2-\frac w2)}}\r)$, which cancels the pole of the Riemann zeta-function and allows our moving $c_s$ back to $\frac14$. Actually, the product is a result of the coprime condition $(mn,q)=1$ in the original expression of $B_{\ol{M,N}}$. In some way, this means that the coprime condition is native and makes the true feature of the main terms.
\end{remark}

The upper bound of the main term mentioned in \eqref{eqbmain} is presented specially in the following lemma.
\begin{lemma}
For $M\le N$,
we have
\begin{align}\label{eqboundPolMN}
P_{\ol{M,N}}\ll M^{-\frac12}N^{\frac12}q^\ve.
\end{align}
\end{lemma}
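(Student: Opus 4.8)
The plan is to start from the exact formula \eqref{eqolPMN} and shift the four lines of integration so as to display the expected size in $M$ and $N$, then bound the remaining integral trivially; this is carried out exactly as in the proof of Lemma \ref{lemubP_{M,N}} for $P_{M,N}$, but with the roles of $u_1$ and $u_2$ interchanged since now $M\le N$. Concretely, in \eqref{eqolPMN} I would move the contours, in the order $c_s$, then $c_{u_2}$, then $c_{u_1}$, to
\[
c_s=\ve,\qquad c_{u_1}=-\tfrac12+\ve,\qquad c_{u_2}=\tfrac12-2\ve,\qquad c_w=\ve.
\]
The first task is to check that no pole of the integrand is met along the way. Here one uses that $\wt W(u_1),\wt W(u_2),G(s),G(w)$ are entire while $G(s)/s$ has its only pole at $s=0$, which is not crossed; that the poles of $\Gamma(\tfrac12-\alpha-u_1-s)$ and of $\Gamma(\tfrac12-\gamma-s-u_2+\tfrac w2)$ appearing in $\ol{H}_2$ have real parts $\approx\tfrac12$, hence remain to the right of the final $u_1$- and $u_2$-lines; that the Gamma factor in the denominator of $\ol{H}_2$ contributes only zeros; that the pole of $\zeta_q(1+\beta+\delta+2s+u_1+u_2+\tfrac w2)$ lies at $\re(u_1)\approx-\tfrac12-\re(\beta+\delta)$, to the left of $-\tfrac12+\ve$ for $q$ large (the separation $|\beta\pm\delta|\gg(\log q)^{-1}$ is used here); and, crucially, that the pole of $\zeta(\alpha+\gamma+2s+u_1+u_2-\tfrac w2)$ at argument $1$ is cancelled in the integrand by the Euler factor $\prod_{p\mid q}\big(1-p^{-(1-(\alpha+\gamma+2s+u_1+u_2-w/2))}\big)$ — exactly the cancellation already exploited at the end of the proof of Lemma \ref{lemP_{M,N}} — so it does not obstruct the shift in $c_s$.

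Once the contours have been moved, I would estimate the quadruple integral in absolute value. On the new lines the integrand decays rapidly in $\im(s),\im(u_1),\im(u_2),\im(w)$: $G(s)$ and $G(w)$ decay exponentially, $\wt W(u_1)$ and $\wt W(u_2)$ decay faster than any power, and by Stirling the ratio of Gamma factors in $\ol{H}_2$ together with $g_{\abgd}(s)$ grows at most polynomially, while the zeta- and $\zeta_q$-factors have polynomial growth on their vertical lines (all arguments being bounded away from $1$, again by $|\alpha\pm\beta|\gg(\log q)^{-1}$, etc.). Hence the integral converges absolutely and is $O(q^\ve)$ times the supremum over the contours of the size of the remaining factors, which one then reads off: $|M^{u_1}|=M^{-1/2+\ve}$, $|N^{u_2}|=N^{1/2-2\ve}$, and
\[
\big|q^{-\alpha-\gamma-u_1-u_2+w/2}\big|=q^{\re(-\alpha-\gamma)-c_{u_1}-c_{u_2}+c_w/2}\ll q^{\ve},
\]
since $-c_{u_1}-c_{u_2}+c_w/2=\tfrac32\ve$ and $\re(\alpha+\gamma)\ll(\log q)^{-1}$; moreover $\prod_{p\mid q}\big|1-p^{-(1-(\alpha+\gamma+2s+u_1+u_2-w/2))}\big|\le 2^{\omega(q)}\ll q^\ve$ because that exponent has real part $\approx 1-2c_s>0$, while $|\zeta_q(1-\alpha+\beta)|\ll q^\ve$ and $|\zeta_q(2-\alpha+\beta-\gamma+\delta+w)|^{-1}\ll 1$. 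Combining these and absorbing $M^\ve,N^\ve$ into $q^\ve$ (recall $MN\le q^{2+\ve}$) yields $P_{\ol{M,N}}\ll q^\ve M^{-1/2}N^{1/2}$, which is \eqref{eqboundPolMN}.

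The only genuinely delicate step — minor enough that the estimate is "easy" — is the pole bookkeeping during the contour shifts: one must confirm that the apparent pole of $\zeta(\alpha+\gamma+2s+u_1+u_2-w/2)$ is truly removable in the integrand, so that $c_s$ may be pushed down to $\ve$ rather than being stuck on a line where that argument has real part $\ge 1$, and that the $\Gamma$-poles in $\ol{H}_2$ and the pole of $\zeta_q(1+\beta+\delta+2s+u_1+u_2+\tfrac w2)$ really do lie on the correct sides of the final $u_1$- and $u_2$-lines — which is what forces the particular order $c_s\to c_{u_2}\to c_{u_1}$ of the shifts. Everything past that is a routine absolute-value estimate.
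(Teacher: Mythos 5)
Your argument is correct and is exactly the proof the paper has in mind (the paper only asserts the bound is "easy to see" from \eqref{eqolPMN}): shift the contours as in the corresponding bound for $P_{M,N}$, with the roles of $c_{u_1}$ and $c_{u_2}$ interchanged, check that no pole of the Gamma factors, of $\zeta_q(1+\beta+\delta+2s+u_1+u_2+\tfrac w2)$, or of $G(s)/s$ is crossed, and estimate trivially using the rapid decay of $G$ and $\wt W$. One small remark: along your path the argument of $\zeta(\alpha+\gamma+2s+u_1+u_2-\tfrac w2)$ never reaches real part $1$, so the cancellation by the Euler factor $\prod_{p\mid q}\left(1-p^{-(1-(\alpha+\gamma+2s+u_1+u_2-w/2))}\right)$ is not actually needed here (it is harmless to note it, but it is not the crucial point you call it).
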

\begin{proof}
Observing the location of the poles of $H_2$, we may move the contours in \eqref{eqolPMN} to
\[
c_s=\ve,\ \ \ \ c_{u_1}=\tfrac12-2\ve,\ \ \ \ c_{u_2}=-\tfrac12+\ve,\ \ \ \ c_w=\ve,
\]
crossing no poles. Then estimating the integral trivially establishes the lemma.
\end{proof}

Now we assemble $P_{\ol{M,N}}$ to evaluate $A_{\ol{O}}$. Let $\mathcal{A}_{\ol{O}}(\abgd)$ denote the main term of $A_{\ol{O}}(\abgd)$, and we deduce it in the following proposition.
\begin{proposition}\label{proolmathcalA}
We have
\begin{align}
\mathcal{A}_{\ol{O}}(\alpha,\beta,\gamma,\delta)=&\ol{\mathcal{P}}(\alpha,\beta,\gamma,\delta) +\ol{\mathcal{P}}(\beta,\alpha,\gamma,\delta)+\ol{\mathcal{P}}(\alpha,\beta,\delta,\gamma) +\ol{\mathcal{P}}(\beta,\alpha,\delta,\gamma) +O\left(q^{-\frac13+\ve}\r),\notag
\end{align}
where
\begin{align}\label{eqmathcalP}
\ol{\mathcal{P}}(\alpha,\beta,\gamma,\delta) &=\frac{\zeta_q(1-\alpha+\beta)\zeta_q(1-\gamma+\delta)} {\zeta_q(2-\alpha+\beta-\gamma+\delta)}q^{-\alpha-\gamma}\\
&\times\frac1{2\pi i} \int_{(\frac 14)}\frac{G(s)}{s}g_{\alpha,\beta,\gamma,\delta}(s)\frac{\Gamma(\frac12-\alpha-s) \Gamma(\frac12-\gamma-s)}{\Gamma(1-\alpha-\gamma-2s)} \notag\\
&\times \prod_{p\mid q}\left(1-\frac1{p^{1-(\alpha+\gamma+2s)}}\r)\zeta(\alpha+\gamma+2s) \zeta_q(1+\beta+\delta+2s) ds.\notag
\end{align}
\end{proposition}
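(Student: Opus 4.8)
The plan is to mirror the proof of Proposition~\ref{promathcalB}, the only new feature being that the dyadic sum defining $A_{\ol{O}}$ runs over both $M\le N$ and $M>N$. Combining \eqref{eqAolOBMN}, \eqref{eqolBCC} and \eqref{eqolCPPE}, the main term of $A_{\ol{O}}(\abgd)$ is
\[
\sum_{M\le N}\Big(P_{\ol{M,N}}(\abgd)+P_{\ol{M,N}}(\bagd)+P_{\ol{M,N}}(\abdg)+P_{\ol{M,N}}(\badg)\Big)
\]
plus the same four shift-permutations summed over $M>N$. For the $M>N$ range I would invoke \eqref{eqPolNM}, which writes each $P_{\ol{M,N}}$ with $M>N$ via the formula \eqref{eqolPMN} with $M,N$ interchanged and the shifts transposed by $(\abgd)\mapsto(\gdab)$; relabelling the summation turns this range into another sum over $M\le N$. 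Here I must check, as flagged in Section~\ref{secT}, that the two ranges reassemble into a single expression \emph{before} the partition of unity is restored; this reduces to the invariance of the integrand of \eqref{eqolPMN}, after a relabelling $u_1\leftrightarrow u_2$, under the simultaneous interchange of $(M,N)$ and of $(\abgd)$ with $(\gdab)$, which rests on the invariance of $g_{\abgd}(s)$ and of the $\zeta_q$- and Euler-product factors occurring there.

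The remaining steps copy those of Proposition~\ref{promathcalB}. Shifting $c_{u_1}\to-A$ and $c_{u_2}\to+A$ in \eqref{eqolPMN} (keeping $u_1+u_2$ fixed, so that no pole of the $\zeta_q$-factors is crossed) shows $P_{\ol{M,N}}\ll q^{-A}$ once $\max(M/N,N/M)\gg q^{\ve}$, so the sums above may be extended to run over all dyadic $M,N$ up to a negligible error. After the changes of variables $s\to s-u_1$, $w\to w-2u_1+2u_2$ that isolate the factor carrying $\wt W(u_1)\wt W(u_2)$, I would restore the partition through the Mellin identity \eqref{eTqransform}, collapsing the $u_1,u_2$-integrals and leaving a double integral in $s$ and $w$ --- the analogue of \eqref{eqP} but with the gamma ratio of $\ol H_2$ replacing that of $H_2$. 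Finally I would move the $w$-contour to the left; the only pole crossed is the simple pole at $w=0$ coming from $G(w)/w$ (the pole of $\zeta_q(1-\gamma+\delta+w)$ at $w=\gamma-\delta$ being killed by $G(\gamma-\delta)=0$, and the pole of $\zeta(\alpha+\gamma+2s-\tfrac w2)$ by the finite Euler factor $\prod_{p\mid q}(1-p^{-(1-(\alpha+\gamma+2s-w/2))})$), and its residue is exactly $\ol{\mathcal{P}}(\abgd)$ as displayed in the statement; summing the four shift-permutations produces the asserted main term, the integral along the shifted $w$-line giving the error term.

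The conceptual obstacle is the bookkeeping of the first paragraph: one must ensure that the $M\le N$ and $M>N$ contributions reassemble into the full dyadic sum with a single integrand, since a mismatch would feed a wrong constant into the Mellin restoration --- precisely the point that Section~\ref{secT} singles out as ``very important''. The technical obstacle is the final $w$-contour shift: because $\ol H_2$ carries the gamma factor $\Gamma(\tfrac12-\gamma-s+\tfrac w2)$ in the \emph{numerator} (where $H_2$ has the analogous factor in the denominator), the $w$-line cannot be pushed as far to the left as in Proposition~\ref{promathcalB}; subject to the convergence constraints inherited from the derivation of \eqref{eqolPMN}, one settles for a somewhat crude treatment of this shift, which is why the saving here is only $O(q^{-1/3+\ve})$ rather than $O(q^{-1/2+\ve})$. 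As this is far smaller than the error of Theorem~\ref{thmmains}, no optimisation is attempted.
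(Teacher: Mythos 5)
Your plan founders at exactly the step you flag as delicate: the reassembly of the $M\le N$ and $M>N$ ranges \emph{before} the partition is restored. You claim the integrand of \eqref{eqolPMN} is invariant, after relabelling $u_1\leftrightarrow u_2$, under the simultaneous interchange of $(M,N)$ and $(\abgd)\leftrightarrow(\gdab)$, "resting on the invariance of $g_{\abgd}(s)$ and of the $\zeta_q$- and Euler-product factors". That invariance is false while the $w$-integral is still present: the prefactor $\zeta_q(1-\alpha+\beta)$ pairs with the factor $\zeta_q(1-\gamma+\delta+w)$ inside $H_1$, and under the swap these become $\zeta_q(1-\gamma+\delta)$ and $\zeta_q(1-\alpha+\beta+w)$, which do not match; likewise the gamma factors in $\ol{H}_2$, namely $\Gamma(\tfrac12-\alpha-u_1-s)$ and $\Gamma(\tfrac12-\gamma-s-u_2+\tfrac w2)$, carry the $\tfrac w2$ asymmetrically and are exchanged only when $w=0$. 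This is precisely what the paper points out ("the expressions for the two cases $M\le N$ and $M>N$ are not identical \dots\ The difference is due to the $w$-line of integral"), and it is why the paper reverses your order of operations: for each $P_{\ol{M,N}}$ with $M\le N$ it first eliminates the $w$-integration, moving $c_w$ to $-\tfrac14+\ve$ to pick up only the residue at $w=0$ and then estimating the leftover integral (via $c_s\to\tfrac16$, $c_w\to-\tfrac23+\ve$) by $O\left(q^{-\frac13+\ve}\r)$ --- this is where the $\tfrac13$ actually comes from; only the resulting $w$-free expression \eqref{eqPolMN} satisfies $P_{\ol{M,N}}(\abgd)=P_{\ol{N,M}}(\gdab)+O\left(q^{-\frac13+\ve}\r)$, so that, after the substitution $s\to s-\tfrac{u_1+u_2}2$, the two ranges share one integrand and \eqref{eTqransform} may legitimately be applied. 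Your version restores the partition first and shifts the $w$-contour last, so the Mellin identity is applied to two ranges with genuinely different integrands, which is not justified.

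A secondary slip: your extension step is both unnecessary and wrong. The dyadic sum for $A_{\ol O}$ in \eqref{eqAolOBMN} already runs over all $M,N$, so nothing needs extending; and the assertion that $P_{\ol{M,N}}\ll q^{-A}$ once $\max(M/N,N/M)\gg q^{\ve}$ cannot hold --- shifting $c_{u_1}\to-A$, $c_{u_2}\to+A$ produces the factor $(N/M)^{A}$, which blows up for $N\gg M$, and indeed \eqref{eqboundPolMN} shows the dual main terms are of size $M^{-\frac12}N^{\frac12}q^{\ve}$, i.e.\ largest precisely in the unbalanced ranges, unlike the $A_O$ case you are mirroring.
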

\begin{proof}
It follows from \eqref{eqAOBMN}, \eqref{BBMN}, \eqref{eqBMN}, \eqref{eqwtCC}, and \eqref{eqCPPE} that
\begin{align}
\mathcal{A}_{\ol{O}}(\abgd)=\ol{P}(\abgd)+\ol{P}(\bagd)+\ol{P}(\abdg)+\ol{P}(\badg)\notag
\end{align}
with
\[
\ol{P}(\abgd)=\sum_{M\le N}P_{\ol{M,N}}(\abgd)+\sum_{M> N}P_{\ol{M,N}}(\abgd)
\]
and $P_{\ol{M,N}}(\abgd)=P_{\ol{N,M}}(\gdab)$ for $M>N$.

Applying this into \eqref{eqolPMN}, one notes that expressions of $P_{\ol{M,N}}$ for $M\le N$ and $M>N$ look different, so we can not recover the partition with them directly.
To resolve the problem, we eliminate the $w$-line of integral in \eqref{eqolPMN}. We move $c_s$ to $\frac 16$ followed by $c_w\rightarrow -\frac23+\ve$, and then a trivial estimate shows that the remained integral is $O\left(q^{-\frac13+\ve}\r)$. Note that the pole of $\zeta(1-\gamma+\delta+w)$ is cancelled by $G(\gamma-\delta)=0$,  and we have just crossed a pole at $w=0$. Calculating the residue gives
\begin{align}\label{eqPolMN}
P_{\ol{M,N}}&(\abgd)=\zeta_q(1-\alpha+\beta)\zeta(1-\gamma+\delta)\left(\frac1{2\pi i}\r)^3\int_{(c_s)}\int_{(c_{u_1})}\int_{(c_{u_2})} q^{-\alpha-\gamma-u_1-u_2} M^{u_1}N^{u_2}\\
&\times\prod_{p\mid q}\left(1-\frac1{p^{1-(\alpha+\gamma+2s+u_1+u_2)}}\r)\frac{\zeta(\alpha+\gamma+2s+u_1+u_2)\zeta_q(1+\beta+\delta+2s+u_1+u_2)} {\zeta_q(2-\alpha+\beta-\gamma+\delta)}\notag\\
&\times \frac{\Gamma(\tfrac12-\alpha-u_1-s)\Gamma(\frac12-\gamma-s-u_2)}{\Gamma(1-\alpha-\gamma-2s-u_1-u_2)}\frac{G(s)}{s} g(s) \wt{W}(u_1)\wt{W}(u_2)du_2du_1ds+O\left(q^{-\frac13+\ve}\r).\notag
\end{align}
It is easy to see from \eqref{eqPolMN} that
\[
P_{\ol{M,N}}(\abgd)=P_{\ol{N,M}}(\gdab)+O\left(q^{-\frac13+\ve}\r).
\]
In combination with the relationship $P_{\ol{M,N}}(\abgd)=P_{\ol{N,M}}(\gdab)$, one notes that \eqref{eqPolMN} also holds for $M>N$, that is to say, the two expressions of $P_{\ol{M,N}}$ for $M\le N$ and $M>N$ are unified in \eqref{eqPolMN}.

Now we come to recover the partition with \eqref{eqPolMN}, for which we need a formula due to Mellin transform
\begin{align}\label{eTqransform}
\sum_{M,N}\left(\frac 1{2\pi i}\r)^2\int_{(c_{u_1})}\int_{(c_{u_2})}F(u_1,u_2) \wt{W}(u_1)\wt{W}(u_2)du_1du_2=F(0,0)
\end{align}
for a `nice' function $F$ (see also the proof of \cite[Lemma 6.2]{You11}).
After the variable change $s\rightarrow s-\frac{u_1+u_2}2$ in \eqref{eqPolMN}, the zeta functions do not rely on $u_1$ and $u_2$ any more, and the product of gamma factors and $G$ becomes
\[
\frac{\Gamma\left(\tfrac12-\alpha-s-\frac{u_1-u_2}2\r)\Gamma(\frac12-\gamma-s+\frac{u_1-u_2}2)}{\Gamma(1-\alpha-\gamma-2s)}G\left(s-\frac{u_1+u_2}2\r),
\]
which decays rapidly in the imaginary direction of $u_1$ and $u_2$.
Thus, we sum over $M, N$ and recover the dyadic partition with \eqref{eTqransform} to get
\[
\ol{P}(\abgd)=\ol{\mathcal{P}}(\abgd)+O\left(q^{-\frac13+\ve}\r).
\]
This establishes the lemma.
\end{proof}

The treatment of $A_O$ is similar and easier. For $P_{M,N}$, there is a similar expression as in Lemma \ref{lemolPMN}, which also implies the upper bound \eqref{eqbmain}. Especially, the different gamma factors in $H$ do not bring any pole in calculating $P_{M,N}$, so we do not have to calculate a residue as \eqref{eqdualresidue}. In addition, one may recover the partition for $P_{M,N}$ directly since all $M\gg Nq^\ve$ terms are negligible. We use $\mathcal{A}_O(\abgd)$ for the main term of $A_O(\abgd)$.

\begin{proposition}\label{promathcalB}
We have
\begin{align}
\mathcal{A}_O(\alpha,\beta,\gamma,\delta)=&\mathcal{P}(\alpha,\beta,\gamma,\delta) +\mathcal{P}(\beta,\alpha,\gamma,\delta)+\mathcal{P}(\alpha,\beta,\delta,\gamma) +\mathcal{P}(\beta,\alpha,\delta,\gamma) +O\left(q^{-\frac12+\ve}\r),\notag
\end{align}
where
\begin{align}\label{eqmathcalP}
\mathcal{P}(\alpha,\beta,\gamma,\delta) &=\frac{\zeta_q(1-\alpha+\beta)\zeta_q(1-\gamma+\delta)} {\zeta_q(2-\alpha+\beta-\gamma+\delta)}q^{-\alpha-\gamma}\\
&\times\frac1{2\pi i} \int_{(\frac 14)}\frac{G(s)}{s}g_{\alpha,\beta,\gamma,\delta}(s)\frac{\Gamma(\frac12-\alpha-s) \Gamma(\alpha+\gamma+2s)}{\Gamma(\frac12+\gamma+s)} \notag\\
&\times \prod_{p\mid q}\left(1-\frac1{p^{1-(\alpha+\gamma+2s)}}\r)\zeta(\alpha+\gamma+2s) \zeta_q(1+\beta+\delta+2s) ds.\notag
\end{align}
\end{proposition}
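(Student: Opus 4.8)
The strategy is to sum the representations \eqref{eqP_{M,N}} over $M,N$ by recovering the partition of unity and then re-running the residue computation, and finally to combine the four permuted copies. By the symmetry of the statement it suffices to prove
\[
\sum_{M,N}P_{M,N}(\abgd)=\mathcal{P}(\abgd)+R(\abgd)+O\!\left(q^{-\frac12+\ve}\right),
\]
where $R(\abgd)$ will be an explicit contour integral that is \emph{odd} under the transposition $\gamma\leftrightarrow\delta$. Applying this with $\alpha\leftrightarrow\beta$ and/or $\gamma\leftrightarrow\delta$ and adding the four identities (the last two correspond to $\wt C_{M,N}$, and $\wt C_{M,N}(\abgd)=C_{M,N}(\abdg)$), the four $R$-terms cancel in pairs: the only factor of $R(\abgd)$ that is not symmetric under $\gamma\leftrightarrow\delta$ is a simple factor $(\gamma-\delta)^{-1}$, and since $G$ is even in $s$ the remaining factors are unchanged, so $R(\abdg)=-R(\abgd)$; what survives is $\mathcal{P}(\abgd)+\mathcal{P}(\bagd)+\mathcal{P}(\abdg)+\mathcal{P}(\badg)+O(q^{-\frac12+\ve})$. (The statement for $\mathcal{A}^*_O$ follows by the shift substitution indicated in the text.)

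The key point is that the sum over $M,N$ cannot be done directly in \eqref{eqP_{M,N}}: once the residue of $\mathcal{D}_q$ has been taken, $u_1,u_2$ no longer appear only in $M^{u_1}N^{u_2}$, so there is nothing to pair $M^{u_1},N^{u_2}$ with. I would therefore recover the partition \emph{before} the residue, indeed before the Mellin expansions of $V$ and $\varpi$. Starting from the form \eqref{eqCMN} of $C_{M,N}$, the identities $\sum_{M}W(m/M)=1$ and $\sum_{N}W((m+r)/N)=1$ hold over the ranges that actually occur ($m<n=m+r$ forces the relevant $M$ to be $\ll N$, while $m(m+r)\le q^{2+\ve}$ up to the part where $V$ decays rapidly), so $\sum_{M,N}C_{M,N}$ equals, up to $O(q^{-100})$, the undecomposed sum
\[
\widehat C=\frac1{\vp^*(q)}\sum_{d\mid q}\vp(d)\mu\!\left(\frac qd\right)\sum_{r\equiv0\,(\bmod d)}\sum_{(l,q)=1}\frac1{l^{1-\gamma+\delta}}\sum_{\substack{(m,q)=1\\(m+r,q)=1}}\frac{\sigma_{\alpha-\beta}(m)c_l(m+r)}{m^{\frac12+\alpha}(m+r)^{\frac12+\gamma}}V\!\left(\frac{m(m+r)}{q^2}\right)\varpi_{\gamma-\delta}\!\left(\frac l{\sqrt{m+r}}\right).
\]
(One sees this at the level of \eqref{eqC_MN} by reassembling $(m+r)$ from the Mellin--Barnes $v$-integral, so that $(m+r)^{-u_2}$ becomes visible and pairs with $N^{u_2}$; then $\sum_N$ sets $u_2=0$, $\sum_M$ sets $u_1=0$.) Repeating the separation of variables of Section \ref{secsv} on $\widehat C$, but now with $u_1,u_2$ absent, produces a \emph{triple} contour integral in $s,w,v$ whose integrand is $q^{2s}r^{-s+v+w/2}l^{-w}\,\mathcal{D}_q(\tfrac12+\alpha+s+v,\alpha-\beta,h/l,r)$ times the kernel $\frac{\Gamma(v)\Gamma(\frac12+\gamma+s-v-\frac w2)}{\Gamma(\frac12+\gamma+s-\frac w2)}\cdot\frac{G(s)G(w)}{sw}g_{\alpha,\beta,\gamma,\delta}(s)\zeta_q(1-\gamma+\delta+w)$, summed over $d,r,l$ and over $h\,(\bmod\,l)$ via $c_l(m+r)=\msum_{h(l)}e(hm/l)e(hr/l)$. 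Choosing $c_s$ slightly above $\tfrac12-\tfrac12\re(\alpha+\gamma)$, $c_v$ slightly negative (so the pole $v=0$ of $\Gamma(v)$ stays to the right), and $c_w$ slightly above $|\re(\gamma-\delta)|$ makes the sums over $m,r,l$ absolutely convergent; and, as the residue part of $\widehat C$ is exactly $\sum_{M,N}(P_{M,N}(\abgd)+P_{M,N}(\bagd))$, the quantity $\sum_{M,N}P_{M,N}(\abgd)$ is the contribution of the single pole $v=\tfrac12-\alpha-s$.

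Now I would carry out the residue and the remaining integrals just as in the proof of Lemma \ref{lemP_{M,N}}. Moving $c_v$ left past $v=\tfrac12-\alpha-s$, Proposition \ref{lemgED2} gives the $h$-independent residue $\frac{\vp(q)}{q}\sum_{b\mid q_r}\frac{\mu(b)}{\vp(b)}\,l^{-1+\alpha-\beta}\zeta_q(1-\alpha+\beta)$; the $h$-sum collapses $\msum_{h(l)}e(hr/l)$ back to $c_l(r)$, and the now absolutely convergent sums over $l$ (Lemma \ref{lemus}) and over $r$, $b$, $k=(q,r)$, $d$ (Lemmas \ref{lemsumf} and \ref{lemsumk}) produce the factor $\prod_{p\mid q}\big(1-p^{-(1-(\alpha+\gamma+2s-w/2))}\big)$, the ratio $\zeta(\alpha+\gamma+2s-\tfrac w2)\zeta_q(1+\beta+\delta+2s+\tfrac w2)/\zeta_q(2-\alpha+\beta-\gamma+\delta+w)$, and the power $q^{-\alpha-\gamma+w/2}$ --- i.e. exactly $P_{M,N}$ with $u_1=u_2=0$ and $M=N=1$, a double integral in $s,w$. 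Evaluating the $w$-integral by moving $c_w$ to $\re w=-1+\ve$: the residue at $w=0$ (where $G(0)=1$) supplies $\zeta_q(1-\gamma+\delta)$ and sets $w=0$, which is precisely $\mathcal{P}(\abgd)$ after the $s$-contour is moved back to $\tfrac14$ (the poles of $\Gamma(\tfrac12-\alpha-s)$ at $s=\tfrac12-\alpha$ crossed in that move contribute nothing, because $G(\tfrac12\pm\alpha)=0$ by Definition \ref{DefG}); the residue of $\zeta_q(1-\gamma+\delta+w)$ at $w=\gamma-\delta$ gives the term $R(\abgd)$, whose only $\gamma\leftrightarrow\delta$-asymmetric factor is $(\gamma-\delta)^{-1}$ (all other factors become symmetric upon substituting $w=\gamma-\delta$: e.g.\ $\alpha+\gamma+2s-\tfrac w2\mapsto\alpha+\tfrac{\gamma+\delta}2+2s$, $q^{-\alpha-\gamma+w/2}\mapsto q^{-\alpha-(\gamma+\delta)/2}$, the residue of $\zeta_q$ being $\vp(q)/q$); and the integral along $\re w=-1+\ve$ is $\ll q^{-1/2+\ve}$ because of the factor $q^{w/2}$. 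The leftover $v$-integral along the shifted contour, the $O(q^{-100})$ from the partition recovery, and the routine boundary terms met while relocating the contours are all absorbed into $O(q^{-\frac12+\ve})$.

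I expect the main obstacle to be precisely this contour and pole bookkeeping, together with the justification of interchanging $\sum_{M,N}$ with the contour integrations. One has to reassemble $(m+r)$ out of the Mellin--Barnes $v$-integral to see the factor $(m+r)^{-u_2}$, place the contours so that the only poles crossed in the $v$- and $w$-moves are the intended ones (in particular, confirming that $c_w$ can be pushed past $w=\gamma-\delta$ down to $\re w\approx-1$, which is what produces the $q^{-1/2}$ saving, and that the residue there is genuinely odd under $\gamma\leftrightarrow\delta$ so that it cancels upon summing the four copies), verify that the spurious residues in the $s$-move vanish by the vanishing of $G$ at $\tfrac12\pm\alpha$, $\tfrac12\pm\beta$, and control every interchange by the rapid decay of $G$, $\wt W$ and the gamma factors.
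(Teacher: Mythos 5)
Your proposal is correct in substance and arrives at the same double $s,w$-integral \eqref{eqP} as the paper, but it gets there by a genuinely different route in two places. First, the partition recovery: the paper never returns to the unpartitioned sum; it works with the exact formula \eqref{eqP_{M,N}}, first extends the sum to all $M,N$ at negligible cost (taking $c_{u_1}=-A$, $c_{u_2}=A$), and then performs the changes of variables $s\to s-u_1$, $w\to w-2u_1+2u_2$ precisely so that \eqref{eTqransform} applies and sets $u_1=u_2=0$. You instead reassemble $(m+r)^{-u_2}$ from the Barnes-type $v$-integral and recover the partition already at the level of \eqref{eqCMN}, before the $\mathcal{D}_q$-residue is extracted; this works, but it costs an extra interchange (that the residue part of the unpartitioned object equals $\sum_{M,N}P_{M,N}$), which you rightly flag as needing justification, whereas the paper's order of operations needs only \eqref{eTqransform} applied to an already-established identity. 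A small fix on your contours: with $c_v$ slightly negative, $c_s$ slightly above $\tfrac12-\tfrac12\re(\alpha+\gamma)$ need not make the $\mathcal{D}_q$-series absolutely convergent (one needs $\re(s+v)>\tfrac12-\re\alpha$, which can fail when $\re\gamma>\re\alpha$); taking $c_s$ somewhat larger, or arguing iteratedly as in Lemma \ref{lemC_{M,N}}, repairs this. Second, the pole of $\zeta_q(1-\gamma+\delta+w)$ at $w=\gamma-\delta$: the paper kills it outright by $G(\gamma-\delta)=0$, so no residue term ever appears and each $P(\cdot)$ individually equals $\mathcal{P}(\cdot)+O(q^{-\frac12+\ve})$; you retain the residue $R(\abgd)$ and cancel it against the copy coming from $\wt C_{M,N}(\abgd)=C_{M,N}(\abdg)$ via its antisymmetry under $\gamma\leftrightarrow\delta$. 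That cancellation is sound — after setting $w=\gamma-\delta$ the only asymmetric factor is indeed $(\gamma-\delta)^{-1}$, since $G$ is even in its argument and symmetric in the shifts — and it has the merit of not relying on the vanishing of $G$ at $\gamma-\delta$, though it only controls the sum of the four permuted terms rather than each one separately, which suffices for the proposition as stated.
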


One may note that the error term $q^{-\frac12+\ve}$ in Proposition \ref{promathcalB1} is sharper than the one in Proposition \ref{proolmathcalA}. This is also due to the different gamma factors in $H$, which allows our moving $c_w$ to the further left $-1+\ve$ when eliminating the $w$-integral.

We use $\mathcal{A}^*_O(\abgd)$ for the main term of $A^*_O(\abgd)$.
By the relationship between $A_O^*$ and $A_O$ in \eqref{A*OAO}, we switch the parameters to get
\begin{proposition}\label{promathcalB1}
We have
\begin{align}
\mathcal{A}_O^*(\alpha,\beta,\gamma,\delta)=&\mathcal{P}(\gamma,\delta,\alpha,\beta) +\mathcal{P}(\delta,\gamma,\alpha,\beta)+\mathcal{P}(\gamma,\delta,\beta,\alpha) +\mathcal{P}(\delta,\gamma,\beta,\alpha) +O\left(q^{-\frac12+\ve}\r),\notag
\end{align}
where $\mathcal{P}(\alpha,\beta,\gamma,\delta)$ is given by \eqref{eqmathcalP}.
\end{proposition}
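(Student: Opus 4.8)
The plan is to obtain Proposition \ref{promathcalB1} as an immediate consequence of Proposition \ref{promathcalB} together with the symmetry \eqref{A*OAO}. First I would recall that, by \eqref{A*OAO}, one has the identity of functions $A_O^*(\abgd)=A_O(\gdab)$, so the decomposition of $A_O$ into its main term plus error carried out in Sections \ref{secnext}--\ref{secmainAO} applies verbatim after the substitution $\alpha\mapsto\gamma$, $\beta\mapsto\delta$, $\gamma\mapsto\alpha$, $\delta\mapsto\beta$. Since every step of that decomposition (the approximate functional equation, the dyadic partition, the residue extraction in Lemma \ref{lemP_{M,N}}, and the resummation over $M,N$ in Proposition \ref{promathcalB}) is performed formally in the shifts, the splitting into main term and error commutes with this permutation; hence the main term of $A_O^*(\abgd)=A_O(\gdab)$ is precisely $\mathcal{A}_O(\gdab)$, i.e. $\mathcal{A}_O^*(\abgd)=\mathcal{A}_O(\gdab)$.

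Next I would simply invoke Proposition \ref{promathcalB} with the tuple $(\abgd)$ replaced by $(\gdab)$, which gives
\begin{align}
\mathcal{A}_O(\gdab)=\mathcal{P}(\gdab)+\mathcal{P}(\dgab)+\mathcal{P}(\gdba)+\mathcal{P}(\dgba)+O\left(q^{-\frac12+\ve}\r),\notag
\end{align}
and this is exactly the asserted formula for $\mathcal{A}_O^*(\abgd)$. The one point deserving a sentence of justification is that the error term $O\left(q^{-\frac12+\ve}\r)$ in Proposition \ref{promathcalB} is uniform in the shifts as they range over the fixed annulus described after Theorem \ref{thmmains}, so it is unaffected by the permutation; this uniformity is already built into the running conventions.

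I do not anticipate any genuine obstacle here: the statement is a bookkeeping corollary requiring no new analytic input beyond Proposition \ref{promathcalB}. The only thing to be careful about is to track the ordering of the four $\mathcal{P}$-terms correctly under the substitution, namely $\mathcal{P}(\abgd)\mapsto\mathcal{P}(\gdab)$, $\mathcal{P}(\bagd)\mapsto\mathcal{P}(\dgab)$, $\mathcal{P}(\abdg)\mapsto\mathcal{P}(\gdba)$, and $\mathcal{P}(\badg)\mapsto\mathcal{P}(\dgba)$, so that the conclusion matches the right-hand side of Proposition \ref{promathcalB} transported through $(\abgd)\mapsto(\gdab)$.
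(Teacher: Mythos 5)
Your proposal is correct and follows exactly the paper's route: the paper derives Proposition \ref{promathcalB1} from Proposition \ref{promathcalB} by the symmetry \eqref{A*OAO}, switching the shifts $(\abgd)\mapsto(\gdab)$, just as you do, and your tracking of the four permuted $\mathcal{P}$-terms is accurate.
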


\section{The main terms of the Theorem}\label{secmain}

In this section, we assemble all the main terms arising from both off-diagonal terms and diagonal terms to deduce the main terms of Theorem \ref{thmmains}, which proceeds much the same as \cite{You11}, and now we sketch it here.

At first, we combine $A_O$, $A^*_O$, and $A_{\ol{O}}$ to deduce the main contribution of the off-diagonal terms in $A_1$, which follows easily by \eqref{eqA_1} with Propositions \ref{proolmathcalA}, \ref{promathcalB}, and \ref{promathcalB1} that
\begin{align}\label{eqmoA_1}
Q(\abgd)+Q(\bagd)+Q(\abdg)+Q(\badg)+O(q^{-\frac13+\ve}),
\end{align}
where
\[
Q(\abgd)=\mathcal{P}(\abgd)+\mathcal{P}(\gdab)+\ol{\mathcal{P}}(\abgd).
\]

\begin{lemma}
We have
\begin{align}
Q(\abgd)=&\frac{\zeta_q(1-\alpha+\beta)\zeta_q(1-\gamma+\delta)}{\zeta_q(2-\alpha+\beta-\gamma+\delta)}\left(\frac q\pi\r)^{-\alpha-\gamma}\frac1{2\pi i}\int_{(\frac14)}\frac{G(s)}{s}\pi^{2s}g_{\abgd}(s)\notag\\
&\times\zeta_q(1-\alpha-\gamma-2s)\zeta_q(1+\beta+\delta+2s)\frac{\Gamma\left(\frac{\frac12-\alpha-s}{2}\r)}{\Gamma\left(\frac{\frac12+\alpha+s}{2}\r)} \frac{\Gamma\left(\frac{\frac12-\gamma-s}{2}\r)}{\Gamma\left(\frac{\frac12+\gamma+s}{2}\r)}ds.\notag
\end{align}
\end{lemma}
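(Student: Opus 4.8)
The plan is to prove this as an identity of single $s$-integrals: first peel off the factors common to the three summands $\mathcal P(\abgd)$, $\mathcal P(\gdab)$, $\ol{\mathcal P}(\abgd)$, and then reduce everything to one identity between products of Gamma functions which follows from the functional equation of $\zeta$ together with the reflection and Legendre duplication formulae.

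\textbf{Reduction to a Gamma identity.} First I would substitute the explicit formulas for $\mathcal P$ and $\ol{\mathcal P}$ into $Q(\abgd)=\mathcal P(\abgd)+\mathcal P(\gdab)+\ol{\mathcal P}(\abgd)$. Since $g_{\abgd}(s)$ is symmetric in the shifts and $\zeta_q(2-\alpha+\beta-\gamma+\delta)$, $\zeta_q(1+\beta+\delta+2s)$, $\zeta(\alpha+\gamma+2s)$ and the Euler factor $\prod_{p\mid q}(1-p^{-(1-\alpha-\gamma-2s)})$ are all invariant under the simultaneous swap $\alpha\leftrightarrow\gamma$, $\beta\leftrightarrow\delta$, the three summands factor through one common expression, so that $Q(\abgd)$ equals
\[
\frac{\zeta_q(1-\alpha+\beta)\zeta_q(1-\gamma+\delta)}{\zeta_q(2-\alpha+\beta-\gamma+\delta)}\,q^{-\alpha-\gamma}\cdot\frac1{2\pi i}\int_{(1/4)}\frac{G(s)}{s}g_{\abgd}(s)\,\zeta_q(1+\beta+\delta+2s)\prod_{p\mid q}\Bigl(1-\frac1{p^{1-\alpha-\gamma-2s}}\Bigr)\,\zeta(\alpha+\gamma+2s)\,\mathcal R(s)\,ds,
\]
where $\mathcal R(s)=R_1+R_2+R_3$ with
\[
R_1=\frac{\Gamma(\tfrac12-\alpha-s)\Gamma(\alpha+\gamma+2s)}{\Gamma(\tfrac12+\gamma+s)},\qquad R_2=\frac{\Gamma(\tfrac12-\gamma-s)\Gamma(\alpha+\gamma+2s)}{\Gamma(\tfrac12+\alpha+s)},\qquad R_3=\frac{\Gamma(\tfrac12-\alpha-s)\Gamma(\tfrac12-\gamma-s)}{\Gamma(1-\alpha-\gamma-2s)}.
\]
Thus everything reduces to identifying $\zeta(\alpha+\gamma+2s)\,\mathcal R(s)$. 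All of this takes place on the line $\re(s)=\tfrac14$, where with the standing conventions on the shifts every Gamma factor is regular and $\alpha+\gamma+2s$ stays away from $1$, so the manipulations below are legitimate and no contour shift is needed.

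\textbf{Collapsing the three Gamma ratios.} I would set $a=\tfrac12-\alpha-s$ and $b=\tfrac12-\gamma-s$, so that $\alpha+\gamma+2s=1-a-b$, $\tfrac12+\gamma+s=1-b$, $\tfrac12+\alpha+s=1-a$ and $1-\alpha-\gamma-2s=a+b$. Rewriting $\Gamma(1-a-b)$, $\Gamma(1-a)$, $\Gamma(1-b)$ by the reflection formula $\Gamma(z)\Gamma(1-z)=\pi/\sin\pi z$ gives $R_1=B(a,b)\sin(\pi b)/\sin(\pi(a+b))$, $R_2=B(a,b)\sin(\pi a)/\sin(\pi(a+b))$ and $R_3=B(a,b)$ with $B(a,b)=\Gamma(a)\Gamma(b)/\Gamma(a+b)$, hence
\[
\mathcal R(s)=B(a,b)\,\frac{\sin\pi a+\sin\pi b+\sin\pi(a+b)}{\sin\pi(a+b)}=B(a,b)\,\frac{2\cos(\pi a/2)\cos(\pi b/2)}{\cos(\pi(a+b)/2)},
\]
the last step by $\sin\pi a+\sin\pi b=2\sin\tfrac{\pi(a+b)}2\cos\tfrac{\pi(a-b)}2$, $\sin\pi(a+b)=2\sin\tfrac{\pi(a+b)}2\cos\tfrac{\pi(a+b)}2$ and $\cos\tfrac{\pi(a-b)}2+\cos\tfrac{\pi(a+b)}2=2\cos\tfrac{\pi a}2\cos\tfrac{\pi b}2$. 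Next I would insert the asymmetric functional equation in the form $\zeta(1-a-b)=2^{1-a-b}\pi^{-a-b}\cos(\tfrac{\pi(a+b)}2)\Gamma(a+b)\zeta(a+b)$ (using $\sin\tfrac{\pi(1-a-b)}2=\cos\tfrac{\pi(a+b)}2$), so that after cancelling $\Gamma(a+b)$ and $\cos\tfrac{\pi(a+b)}2$ one gets $\zeta(\alpha+\gamma+2s)\,\mathcal R(s)=2^{2-a-b}\pi^{-a-b}\zeta(a+b)\,\Gamma(a)\cos\tfrac{\pi a}2\,\Gamma(b)\cos\tfrac{\pi b}2$. Finally the Legendre duplication formula $\Gamma(w)=2^{w-1}\pi^{-1/2}\Gamma(\tfrac w2)\Gamma(\tfrac{w+1}2)$ combined with $\Gamma(\tfrac{1-a}2)\Gamma(\tfrac{1+a}2)=\pi/\cos(\tfrac{\pi a}2)$ gives $\Gamma(a)\cos(\tfrac{\pi a}2)=2^{a-1}\sqrt\pi\,\Gamma(\tfrac a2)/\Gamma(\tfrac{1-a}2)$, and likewise in $b$; the powers of $2$ cancel completely and one is left with
\[
\zeta(\alpha+\gamma+2s)\,\mathcal R(s)=\pi^{1-a-b}\,\zeta(a+b)\,\frac{\Gamma(a/2)\,\Gamma(b/2)}{\Gamma((1-a)/2)\,\Gamma((1-b)/2)}=\pi^{\alpha+\gamma+2s}\,\zeta(1-\alpha-\gamma-2s)\,\frac{\Gamma\bigl(\tfrac{\frac12-\alpha-s}2\bigr)\Gamma\bigl(\tfrac{\frac12-\gamma-s}2\bigr)}{\Gamma\bigl(\tfrac{\frac12+\alpha+s}2\bigr)\Gamma\bigl(\tfrac{\frac12+\gamma+s}2\bigr)}.
\]

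\textbf{Conclusion and main obstacle.} Substituting this back into the expression for $Q$, the factor $q^{-\alpha-\gamma}\pi^{\alpha+\gamma+2s}$ becomes $\left(\tfrac q\pi\r)^{-\alpha-\gamma}\pi^{2s}$ and $\prod_{p\mid q}(1-p^{-(1-\alpha-\gamma-2s)})\,\zeta(1-\alpha-\gamma-2s)=\zeta_q(1-\alpha-\gamma-2s)$, which is exactly the claimed formula. I expect the middle step to be the real obstacle: the three ratios $R_1,R_2,R_3$ issue from three structurally different residue computations, and the point of the lemma is precisely that, after the substitution $a=\tfrac12-\alpha-s$, $b=\tfrac12-\gamma-s$, their sum collapses to the single trigonometric factor $2\cos(\pi a/2)\cos(\pi b/2)/\cos(\pi(a+b)/2)$ — which is exactly what the functional equation of $\zeta$ needs in order to turn $\zeta(\alpha+\gamma+2s)$ into $\zeta_q(1-\alpha-\gamma-2s)$ carrying the symmetrized (halved-argument) Gamma factors. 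Once this collapse is recognized, the remainder is routine bookkeeping with the reflection and duplication formulae.
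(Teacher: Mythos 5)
Your proposal is correct and follows essentially the same route as the paper: the paper simply defers to Young's Lemma 8.1, whose argument is exactly this — add the three residue-type Gamma ratios, apply the functional equation of $\zeta(\alpha+\gamma+2s)$ in the form that absorbs the Euler factor $\prod_{p\mid q}(1-p^{-(1-\alpha-\gamma-2s)})$ into $\zeta_q(1-\alpha-\gamma-2s)$, and reduce to the halved-argument Gamma factors. Your explicit verification via the substitution $a=\tfrac12-\alpha-s$, $b=\tfrac12-\gamma-s$, reflection, sum-to-product, and Legendre duplication is accurate and just spells out the bookkeeping the paper leaves to the citation.
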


\begin{proof}
The lemma can be proved identically to \cite[Lemma 8.1]{You11}, and the only difference is to apply the functional equation
\[
\prod_{p\mid q}\left(1-\frac1{p^{1-(\alpha+\gamma+2s)}}\r)\zeta(\alpha+\gamma+2s)=\frac1{\pi^{\frac12-\alpha-\gamma-2s}} \frac{\Gamma\left(\frac{1-\alpha-\gamma-2s}{2}\r)}{\Gamma\left(\frac{\alpha+\gamma+2s}{2}\r)}\zeta_q(1-\alpha-\gamma-2s)
\]
in place of the functional equation of $\zeta(\alpha+\gamma+2s)$ in the original proof.
\end{proof}

Secondly, by the relationship \eqref{eqrA_1A_{-1}}, one can easily deduce the main contribution of the off-diagonal terms in $A_{-1}$ from \eqref{eqmoA_1},  by switching the signs of the shifts and multiplying by $X_{\alpha,\beta,\gamma,\delta}$, which is
\[
Q_{-}(\abgd)+Q_{-}(\bagd)+Q_{-}(\abdg)+Q_{-}(\badg)+O(q^{-\frac13+\ve})
\]
with
$Q_{-}(\abgd)=X_{\abgd}Q(-\gamma,-\delta,-\alpha,-\beta)$.

Now we combine the main terms of the off-diagonal terms in $A_1$ and $A_{-1}$. We couple $Q(\abgd)$ with $Q_{-}(\badg)$ and the other three pairs by switching the shifts, which follow easily from the following lemma.
\begin{lemma}
We have
\begin{align}
Q(\abgd)+Q_{-}(\badg)=X_{\alpha,\gamma}\frac{\zeta_q(1-\alpha+\beta)\zeta_q(1-\alpha-\gamma)\zeta_q(1+\beta+\delta)\zeta_q(1-\gamma+\delta)} {\zeta_q(2-\alpha+\beta-\gamma+\delta)}.\notag
\end{align}
\end{lemma}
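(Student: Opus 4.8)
The plan is to show that $Q(\abgd)$ and $Q_{-}(\badg)$ are, up to sign, the integral of one and the same integrand over two parallel vertical lines, so that their sum collapses to a single residue. Throughout we work with $\ma=0$ (the even case) and assume the shifts small and in the separated position permitted by the maximum modulus reduction; the final identity then extends by meromorphy. Write $I_Q(s)$ for the integrand of the just‑established formula for $Q(\abgd)$ with the prefactor $\frac{\zeta_q(1-\alpha+\beta)\zeta_q(1-\gamma+\delta)}{\zeta_q(2-\alpha+\beta-\gamma+\delta)}(q/\pi)^{-\alpha-\gamma}$ removed; using $\pi^{2s}g_{\abgd}(s)=\prod_{\xi}\Gamma\!\left(\frac{\frac12+\xi+s}{2}\right)/\prod_\xi\Gamma\!\left(\frac{\frac12+\xi}{2}\right)$, this is
\[
I_Q(s)=\frac{G(s)}{s}\,\frac{\Gamma\!\left(\frac{\frac12-\alpha-s}{2}\right)\Gamma\!\left(\frac{\frac12+\beta+s}{2}\right)\Gamma\!\left(\frac{\frac12-\gamma-s}{2}\right)\Gamma\!\left(\frac{\frac12+\delta+s}{2}\right)}{\Gamma\!\left(\frac{\frac12+\alpha}{2}\right)\Gamma\!\left(\frac{\frac12+\beta}{2}\right)\Gamma\!\left(\frac{\frac12+\gamma}{2}\right)\Gamma\!\left(\frac{\frac12+\delta}{2}\right)}\,\zeta_q(1-\alpha-\gamma-2s)\,\zeta_q(1+\beta+\delta+2s).
\]

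\emph{Step 1: rewriting $Q_{-}(\badg)$.} By definition $Q_{-}(\badg)=X_{\badg}\,Q(-\delta,-\gamma,-\beta,-\alpha)$, and $X_{\badg}=X_\alpha X_\beta X_\gamma X_\delta$ because $X$ is invariant under permutations of the shifts. Substituting $(\alpha,\beta,\gamma,\delta)\mapsto(-\delta,-\gamma,-\beta,-\alpha)$ into the formula for $Q$ and then making the change of variable $s\to-s$ (which sends $G(s)/s\mapsto -G(s)/s$ since $G$ is even), the symmetry of $\pi^{2s}g$ in the four shifts forces the numerator of the gamma part to recombine into exactly the numerator appearing in $I_Q(s)$, while its denominator is $\prod_\xi\Gamma\!\left(\frac{\frac12-\xi}{2}\right)$ in place of $\prod_\xi\Gamma\!\left(\frac{\frac12+\xi}{2}\right)$, and the zeta factors turn back into $\zeta_q(1-\alpha-\gamma-2s)\zeta_q(1+\beta+\delta+2s)$. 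Since, with $\ma=0$, $X_\alpha X_\beta X_\gamma X_\delta=(q/\pi)^{-\alpha-\beta-\gamma-\delta}\prod_\xi\Gamma\!\left(\frac{\frac12-\xi}{2}\right)\!\big/\!\prod_\xi\Gamma\!\left(\frac{\frac12+\xi}{2}\right)$, and the substitution also turns $(q/\pi)^{-\alpha-\gamma}$ into $(q/\pi)^{\beta+\delta}$, all these factors consolidate to $-(q/\pi)^{-\alpha-\gamma}$, leaving
\[
Q_{-}(\badg)=-\frac{\zeta_q(1-\alpha+\beta)\zeta_q(1-\gamma+\delta)}{\zeta_q(2-\alpha+\beta-\gamma+\delta)}\left(\frac q\pi\right)^{-\alpha-\gamma}\frac{1}{2\pi i}\int_{(-1/4)}I_Q(s)\,ds.
\]

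\emph{Step 2: combining and collecting residues.} Adding $Q(\abgd)=\frac{\zeta_q(1-\alpha+\beta)\zeta_q(1-\gamma+\delta)}{\zeta_q(2-\alpha+\beta-\gamma+\delta)}(q/\pi)^{-\alpha-\gamma}\frac{1}{2\pi i}\int_{(1/4)}I_Q(s)\,ds$, the sum equals the common prefactor times $\frac{1}{2\pi i}\big(\int_{(1/4)}-\int_{(-1/4)}\big)I_Q(s)\,ds$, i.e.\ that prefactor times the sum of residues of $I_Q$ in the strip $-\tfrac14<\re(s)<\tfrac14$ (the horizontal connecting segments vanish because of the factor $\exp(s^2)$ in $G(s)=P_{\abgd}(s)\exp(s^2)$). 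In this strip the poles of $\zeta_q(1-\alpha-\gamma-2s)$ and $\zeta_q(1+\beta+\delta+2s)$ at $s=-\tfrac{\alpha+\gamma}{2}$ and $s=-\tfrac{\beta+\delta}{2}$ are cancelled by the zeros $P_{\abgd}(-\tfrac{\alpha+\gamma}{2})=P_{\abgd}(-\tfrac{\beta+\delta}{2})=0$ of $G$ (from Definition \ref{DefG} together with the evenness of $P$ in the shifts), and the gamma factors in $I_Q$ are holomorphic there since the shifts are $\ll(\log q)^{-1}$; hence $s=0$, a simple pole of $G(s)/s$ with $G(0)=1$, is the only pole, with
\[
\operatorname*{Res}_{s=0}I_Q(s)=\frac{\Gamma\!\left(\frac{\frac12-\alpha}{2}\right)\Gamma\!\left(\frac{\frac12-\gamma}{2}\right)}{\Gamma\!\left(\frac{\frac12+\alpha}{2}\right)\Gamma\!\left(\frac{\frac12+\gamma}{2}\right)}\,\zeta_q(1-\alpha-\gamma)\,\zeta_q(1+\beta+\delta).
\]

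\emph{Step 3 and the main difficulty.} Multiplying by the prefactor and using $(q/\pi)^{-\alpha-\gamma}\Gamma\!\left(\frac{\frac12-\alpha}{2}\right)\Gamma\!\left(\frac{\frac12-\gamma}{2}\right)\big/\big(\Gamma\!\left(\frac{\frac12+\alpha}{2}\right)\Gamma\!\left(\frac{\frac12+\gamma}{2}\right)\big)=X_\alpha X_\gamma=X_{\alpha,\gamma}$ (with $\ma=0$) yields the asserted identity, which then holds for all shifts by analytic continuation. The only real work is the bookkeeping in Step~1: one must follow the permutation of shifts through $Q_{-}(\badg)=X_{\badg}Q(-\delta,-\gamma,-\beta,-\alpha)$, verify that after $s\to-s$ the four gamma factors recombine precisely into the numerator of $I_Q(s)$ so that the leftover ratio $\prod_\xi\Gamma\!\left(\frac{\frac12+\xi}{2}\right)\big/\prod_\xi\Gamma\!\left(\frac{\frac12-\xi}{2}\right)$ is cancelled exactly by the $\prod_\xi\Gamma\!\left(\frac{\frac12-\xi}{2}\right)\big/\prod_\xi\Gamma\!\left(\frac{\frac12+\xi}{2}\right)$ hidden in $X_\alpha X_\beta X_\gamma X_\delta$, keep track of the resulting sign, and confirm the pole accounting in the strip so that nothing beyond $s=0$ survives.
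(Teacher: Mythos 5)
Your proof is correct and follows essentially the same route as the paper, which simply invokes Young's Lemma 8.3 (with $\zeta$ replaced by $\zeta_q$): write $Q_{-}(\badg)$ as the same contour integral as $Q(\abgd)$ after $s\to -s$ and absorption of the $X$-factors, so the sum is the residue at $s=0$, the zeta poles being killed by the prescribed zeros of $G$. The bookkeeping in your Step 1 and the pole accounting in Step 2 are exactly the computations underlying that cited proof, so nothing further is needed.
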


\begin{proof}
The lemma can be proved identically to \cite[Lemma 8.3]{You11} with $\zeta$ replaced by $\zeta_q$ only.
\end{proof}

By combining these main terms with the diagonal terms in Lemma \ref{lemD}, we get the main term of $M$
which gives the main terms of Theorem \ref{thmmains}.

\section{Error terms with $M$ and $N$ close}\label{secError}
This section is devoted to bounding the error terms $E_{M,N}$ and $E_{\ol{M,N}}$. We will show only in detail the treatment of $E_{M,N}$ since the other one is similar except some technology variations. For ease of presentation, we set all the shifts $\alpha, \beta$, etc., equal to $0$, and one can easily generalize the arguments below to handle sufficiently small nonzero parameters.

\subsection{Initial treatment}\label{secEMN}
We first recall that $E_{M,N}$ is given by the right-hand side of \eqref{eqC_MN}, but with the contours of the integration at
\begin{align}\label{eqcontour}
c_s=c_w=c_v=\ve,\ \ \ \ c_{u_1}=c_{u_2}=0.
\end{align}
After applying the functional equation of Proposition \ref{lemgED2} to $\mathcal{D}_{q}$, we replace the sum over $a$ and $b$ with a factor $q^\ve$ and rename $a_1,b_1$ as $a, b$ for notational convenience in the following, and subsequently, $\chi'_0$ denotes the principal character modulo $ab$. Then we have
\begin{align}
E_{M,N}\ll q^\ve\mathop{\sum\sum\sum}_{\substack{a,b,d\mid q\\ (b,d)=1}}\mu^2(ab)(|E_+|+|E_-|),\notag
\end{align}
where\begin{align}
E_\pm&=\frac{\vp(d)}{\vp^*(q)}
\sum_{\substack{(r,b)=1 \\ d\mid r}}\sum_{(l,q)=1}\frac1{lab}\msum_{i(\bmod{a})} \msum_{j(\bmod{b})} \msum_{h(\bmod{l})} e\left(\frac{jr}{b}\r)e\left(\frac{hr}{l}\r)\notag\\
&\times\left(\frac1{2\pi i}\r)^5
\int_{(c_s)}\int_{(c_w)}\int_{(c_{u_1})}\int_{(c_{u_2})} \int_{(c_v)}\frac{q^{2s}M^{u_1}N^{u_2}} {r^{\frac12+s+u_2-v-\frac w2}l^{w}\left(lab\r)^{2s+2u_1+2v}}\notag\\
&\times D\Bigg(\tfrac12-s-u_1-v,0,\pm\frac{\ol{h_{i,j}}}{lab},\chi'_0\Bigg) H_\pm(s,u_1,u_2,v,w)dvdu_2du_1dwds, \notag
\end{align}
and where
\begin{align}\label{eqHpm}
H_\pm(s,u_1,u_2,v,w)=2(2\pi)^{-1+2s+2u_1+2v}\Gamma(\tfrac12-s-u_1-v)^2 H(s,u_1,u_2,v,w)S_\pm
\end{align}
with
$S_+=1$ and $S_-=\sin(\pi(s+u_1+v))$.
Since the growth of $S_-$ is canceled by $\Gamma(\tfrac12-s-u_1-v)^2$, $H_\pm$ also has a rapid decay as $H$ does.

\begin{notation}
In the expression of $E_{\pm}$, we should keep in mind that $a, b, d$ are divisors of $q$ with $a, b$ square free and $(ad,b)=1$, especially, there being $ab\le q$ and $bd\le q$. These conditions will be applied directly without any reminder in the following.
\end{notation}

We move $c_{u_1}$ to $-\frac12-3\ve$ and expand $D$ into an absolutely convergent Dirichlet series, then
\begin{align}\label{eqE_pm}
E_\pm=&\frac{\vp(d)}{\vp^*(q)}
\sum_{\substack{(r,b)=1 \\ d\mid r}}\sum_{(l,q)=1}\frac1{lab}\sum_{(m,ab)=1}\frac{d(m)\mathcal{S}_\pm(m,r)}{r^{\frac12}m^{\frac12}} \\
&\times\left(\frac1{2\pi i}\r)^5
\int_{(c_s)}\int_{(c_w)}\int_{(c_{u_1})}\int_{(c_{u_2})} \int_{(c_v)}\frac{q^{2s}M^{u_1}N^{u_2}m^{s+u_1+v}} {r^{s+u_2-v-\frac w2}l^w\left(lab\r)^{2s+2u_1+2v}}\notag\\
&\times H_\pm(s,u_1,u_2,v,w)dvdu_2du_1dwds \notag
\end{align}
with the exponential sum
\begin{align}\label{eqS_pm}
\mathcal{S}_\pm(m, r)=\msum_{i(\bmod a)} \msum_{j(\bmod b)} \msum_{h(\bmod l)} e\left(\frac{hr}{l}\r) e\left(\frac{jr}{b}\r)e\Bigg(\pm\frac{m\ol{h_{i,j}}}{lab}\Bigg).
\end{align}
With a negligible error, we may restrict the $m$-sum in $E_{\pm}$ to $m\le M^{-1}Na^2b^2q^\ve$, which is always assumed in the rest of this section.
To see this, we move the contours in $E_{\pm}$ to
$c_s=2,~c_{u_1}=-A,~c_{u_2}=A,~ c_w=2A,~ c_v=\ve$
for large $A$, crossing no poles. Then a trivial estimate shows that the contribution of all terms with $m\gg M^{-1}Na^2b^2q^\ve$ is negligible.

We distinguish the Kloosterman sum from $\mathcal{S}_\pm(m, r)$ in the following lemma.

\begin{lemma}\label{lemes}
Let $a, b$, and $l$ be three integers coprime with each other. We have
\begin{align}
\mathcal{S}_\pm(m, r)=c_{a}(m)S(r\ol{a}^2,\pm m;lb).\notag
\end{align}
\end{lemma}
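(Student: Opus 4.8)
The plan is to evaluate the exponential sum $\mathcal{S}_\pm(m,r)$ defined in \eqref{eqS_pm} by separating the contributions of the three moduli $a$, $b$, $l$ via the Chinese Remainder Theorem, since these are pairwise coprime. Recall that $h_{i,j}$ is defined by the congruences $h_{i,j}\equiv ilb\pmod a$, $h_{i,j}\equiv jla\pmod b$, $h_{i,j}\equiv ha b\pmod l$ (using the renamed $a,b$ in place of $a_1,b_1$), so that $\ol{h_{i,j}}$ splits multiplicatively: writing $\ol{h_{i,j}}\pmod{lab}$ through its residues modulo $a$, $b$, and $l$ and applying the reciprocity law of Lemma \ref{lemmaspe(x)}, the phase $e(\pm m\ol{h_{i,j}}/(lab))$ factors as a product of three exponentials, one for each of the moduli $a$, $b$, $l$.

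First I would compute the residue of $\ol{h_{i,j}}$ modulo each prime-power piece. Modulo $a$ we have $h_{i,j}\equiv i\,lb\pmod a$, hence $\ol{h_{i,j}}\equiv \ol{i}\,\ol{l}\,\ol{b}\pmod a$; modulo $b$, $\ol{h_{i,j}}\equiv \ol{j}\,\ol{l}\,\ol{a}\pmod b$; modulo $l$, $\ol{h_{i,j}}\equiv \ol{h}\,\ol{a}\,\ol{b}\pmod l$. Plugging these into $e(\pm m\ol{h_{i,j}}/(lab))$ via Lemma \ref{lemmaspe(x)} and combining with the existing phases $e(hr/l)$ and $e(jr/b)$, the triple sum $\mathcal{S}_\pm(m,r)$ factors as
\[
\Bigl(\msum_{i(\bmod a)}e\bigl(\pm\tfrac{m\ol{i}\,\ol{l}\,\ol{b}\cdot\ol{bl}}{a}\bigr)\Bigr)
\Bigl(\msum_{j(\bmod b)}e\bigl(\tfrac{jr}{b}\pm\tfrac{m\ol{j}\,\ol{l}\,\ol{a}\cdot\ol{al}}{b}\bigr)\Bigr)
\Bigl(\msum_{h(\bmod l)}e\bigl(\tfrac{hr}{l}\pm\tfrac{m\ol{h}\,\ol{a}\,\ol{b}\cdot\ol{ab}}{l}\bigr)\Bigr),
\]
where the various bars are inverses modulo the respective moduli and I have inserted the CRT inverse factors carefully. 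The $i$-sum involves no linear term in $i$ and is a Ramanujan sum; after simplifying the inverses it collapses to $c_a(m)$ (using $\sigma_{0}$-type cancellation, i.e.\ the standard identity $\msum_{i(\bmod a)}e(ti\ol{}/a)$ evaluated at the appropriate argument gives $c_a(\pm m)=c_a(m)$). The $h$-sum is a Kloosterman sum modulo $l$: after absorbing the invertible constants $\ol{a}\,\ol{b}$ etc.\ by the substitution property $S(cm,n;q)=S(m,cn;q)$, it becomes $S(r\,\ol{a}^2\ol{b}^{2}\cdot(\text{units}),\pm m;l)$. Similarly the $j$-sum is a Kloosterman sum modulo $b$. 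Finally I would merge the $b$- and $l$-Kloosterman sums into a single Kloosterman sum modulo $bl$ using the twisted-multiplicativity of Kloosterman sums (the identity $S(m,n;q_1)S(m',n';q_2)=S(\ldots;q_1q_2)$ for $(q_1,q_2)=1$ with suitably adjusted arguments), tracking the inverse factors so that the combined argument is $r\,\ol{a}^2$ as claimed.

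The main obstacle I expect is bookkeeping the tower of modular inverses correctly when applying the reciprocity law and the twisted multiplicativity: each application of Lemma \ref{lemmaspe(x)} introduces inverse factors $\ol{a},\ol{b},\ol{l}$ modulo the complementary modulus, and one must verify that, after combining the $b$-sum and the $l$-sum, all the stray units collapse so that precisely the factor $\ol{a}^2$ survives in $S(r\ol{a}^2,\pm m;lb)$ — in particular the $\pm$ sign must be shown to drop out of the final Kloosterman sum (which it does, since $S(m,n;q)=S(-m,-n;q)$ after relabeling, using $c_a(m)=c_a(-m)$). A secondary check is the coprimality hypothesis: the lemma assumes $a,b,l$ pairwise coprime, which is exactly what is available in the setup (the conditions $(l,q)=1$, $(a,b)=1$, and $ab\le q$, $bd\le q$ recorded in the surrounding text), so no issue arises there, but I would state explicitly where $(m,ab)=1$ and $(h,l)=1$ are used so that the relevant inverses exist.
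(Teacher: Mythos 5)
Your proposal is correct and takes essentially the same route as the paper: both use the defining congruences for $h_{i,j}$ together with the reciprocity law (Lemma \ref{lemmaspe(x)}) to factor the phase $e\left(\pm m\ol{h_{i,j}}/(lab)\right)$, extract the Ramanujan sum $c_a(\pm m)=c_a(m)$ from the $i$-sum, and identify the remaining $(j,h)$-sum as $S(r\ol{a}^2,\pm m;lb)$ — the paper merely splits off the modulus $a$ alone and fuses the $j$- and $h$-sums directly into a single Kloosterman sum modulo $lb$ by CRT, so your full three-way split followed by twisted multiplicativity is just different bookkeeping of the same computation. One small correction to your closing remark: the $\pm$ sign does not (and must not) drop out of the final Kloosterman sum, which is $S(r\ol{a}^2,\pm m;lb)$ as stated; the sign is only absorbed in the Ramanujan factor via $c_a(-m)=c_a(m)$.
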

\begin{proof}
By \eqref{eqhij}, we apply Lemma \ref{lemmaspe(x)} to split the last exponential function as
\begin{align}
e\Bigg(\pm\frac{m\ol{h_{i,j}}}{lab}\Bigg) = e\Bigg(\pm\frac{m\ol{i}(\ol{lb})^2}{a}\Bigg) e\Bigg(\pm\frac{m\ol{(hb+jl)}\ol{a}^2}{lb}\Bigg).\notag
\end{align}
Inserting this into \eqref{eqS_pm}, we sum over $i$ first, which turns out to be a Ramanujan sum $c_{a}(m)$. Besides, we apply the Chinese Remainder Theorem to write the sums over $j$ and $h$ as a Kloosterman sum that
\[
\mathop{\sum\nolimits^*}_{j(\bmod {b})}\mathop{\sum\nolimits^*}_{h(\bmod{l})} e\left(\frac{jr}{b}\r)e\left(\frac{hr}{l}\r)e\Bigg(\pm\frac{m\ol{(hb+jl)}\ol{a}^2}{lb}\Bigg)=S(r\ol{a}^2,\pm m;lb).
\]
This establishes the lemma.
\end{proof}

Applying Lemma \ref{lemes} in \eqref{eqE_pm}, we have
\begin{align}
E_\pm=&\frac{\vp(d)}{\vp^*(q)}
\sum_{\substack{(r,b)=1 \\ d\mid r}}\sum_{\substack{m\le M^{-1}Na^2b^2q^\ve\\ (m,ab)=1}} \frac{d(m)c_{a}(m)}{r^{\frac12}m^{\frac12}}\sum_{(l,q)=1}\frac{S(r\ol{a}^2,\pm m;lb)}{lab} \notag\\
&\times\left(\frac1{2\pi i}\r)^5
\int_{(c_s)}\int_{(c_w)}\int_{(c_{u_1})}\int_{(c_{u_2})} \int_{(c_v)}\frac{q^{2s}M^{u_1}N^{u_2}m^{s+u_1+v}} {r^{s+u_2-v-\frac w2}l^w\left(lab\r)^{2s+2u_1+2v}}\notag\\
&\times H_\pm(s,u_1,u_2,v,w)dvdu_2du_1dwds. \notag
\end{align}
Note that
$c_{a}(m)=\mu(a)$
for $(m,a)=1$.\footnote{The coprime condition $(m,a)=1$ benefits from our kicking out nonessential terms in the deduction of the functional equation for $\mathcal{D}_q$. If not the case, an extra factor from $c_a(m)$ will lead to an obstruction.}
We split the coprime condition $(l,q)=1$ into $(l,a)=1$ and $(l,q_{a})=1$ and write $(l,q_{a})=1$ in terms of M\"{o}bius function.
It follows that
\begin{align}
E_\pm\ll \sum_{c\mid q_a}|\mathcal{E}_{\pm}|\notag
\end{align}
with
\begin{align}
\mathcal{E}_{\pm}=&\frac{\vp(d)}{\vp^*(q)}
\sum_{\substack{(r,b)=1 \\ d\mid r}} \sum_{\substack{m\le M^{-1}Na^2b^2q^\ve\\ (m,ab)=1}} \frac{d(m)}{r^{\frac12}m^{\frac12}} \sum_{(l,a)=1}\frac{S(r\ol{a}^2,\pm m;lbc)}{labc}\notag\\
&\times\left(\frac1{2\pi i}\r)^5
\int_{(c_s)}\int_{(c_w)}\int_{(c_{u_1})}\int_{(c_{u_2})} \int_{(c_v)}\frac{q^{2s}M^{u_1}N^{u_2}m^{s+u_1+v}} {r^{s+u_2-v-\frac w2}(lc)^w\left(labc\r)^{2s+2u_1+2v}}\notag\\
&\times H_\pm(s,u_1,u_2,v,w)dvdu_2du_1dwds.\notag
\end{align}
We should keep in mind that $a,b,c$ are square-free divisors of $q$, satisfying $(a,bc)=1$.

Let $\mathcal{E}_{\pm}(R,M^*)$ be the same expression as $\mathcal{E}_{\pm}$ but with $r$ and $m$ restricted to respective dyadic segments $R\le r\le 2R$ and $M^*\le m\le 2M^*$  with $M^*\ll M^{-1}Na^2b^2q^\ve$.

\subsection{The spectral decomposition}
The Kloostermn sum $S(r\ol{a}^2,\pm m;lbc)$ is complicated to handle, and we should make an evolution before making the spectral decomposition.
We apply \eqref{eqSS} with $Q=a^2bc$, $\tau=a^2$, and $s=bc$, which gives
\[
S(r\ol{a}^2,\pm m;lbc)=e\Bigg(\mp m\frac{\ol{bc}}{a^2}\Bigg)S_{\infty, 1/bc}(r, \pm m; \gammaup).
\]
Note that $S_{\infty, 1/bc}(r,\pm m; \gammaup)$ is defined if and only if, $\gammaup=labc$ with integer $l$ coprime with $a$, which is to say, the sum over $l$ in $\mathcal{E}_{\pm}$ is equal to the sum over $\gammaup$.

We replace the sum over $l$ with $\gammaup$ in $\mathcal{E}_{\pm}(R,M^*)$, which is
\begin{align}\label{eqERM}
\mathcal{E}_{\pm}(R,M^*)=&\frac{\vp(d)}{\vp^*(q)}
\sum_{\substack{R\le r\le 2R\\ (r,b)=1, d\mid r}}\sum_{\substack{M^*\le m\le2M^*\\ (m,ab)=1}} \frac{e\left(\mp m\frac{\ol{bc}}{a^2}\r)d(m)}{r^{\frac12}m^{\frac12}} \Sigma_{\pm}(r,m),
\end{align}
where
\[
\Sigma_{\pm}(r,m)=\sum_{\gammaup}^{\Gamma}\frac{S_{\infty, 1/bc}(r, \pm m; \gammaup)}{\gammaup}\phi_{\pm}\left(\frac{4\pi\sqrt{mr}}{\gammaup}\r)
\]
with
\begin{align}
\phi_{\pm}(x)=\left(\frac1{2\pi i}\r)^5
\int\cdots \int\frac{a^wb^wq^{2s}M^{u_1}N^{u_2}} {r^{2s+u_1+u_2}m^{\frac w2}} \left(\frac{x}{4\pi}\r)^{2s+2u_1+2v+w}H_\pm(s,u_1,u_2,v,w)dvdu_2du_1dwds.\notag
\end{align}
Here the contours of the integration are as in \eqref{eqcontour}. By taking $c_s=\frac12-2\ve$, one can easily check that $\phi_{\pm}(0)=0$. Also, taking  $c_{u_1}=-A$ for large $A$ gives $\phi_{\pm}^{(j)}(x)\ll (1+x)^{-2-\ve}$, $j=0,1,2$.

We apply the Kuznetsov formula with $\mathfrak{a}=\infty$ and $\mathfrak{b}=1/bc$ to get
\begin{align}
\Sigma_{+}(r,m)=&\sum_{\substack{k\ge2\\ k~ \text{even}}}\sum_{f\in\mathcal{B}_k(Q)}\tilde{\phi}_+(k)\Gamma(k)\sqrt{rm}\ol{\rho}_{f}(r)\rho_f(\mathfrak{b},m)\notag\\
&+\sum_{f\in\mathcal{B}(Q)}\hat{\phi}_+(\kappa_f)\frac{\sqrt{rm}}{\cosh(\pi \kappa_f)}\ol{\rho}_f(r)\rho_f(\mathfrak{b},m)\notag\\ &+\frac1{4\pi}\sum_{\mathfrak{c}}\int_{-\infty}^{\infty}\hat{\phi}_+(\kappa)\frac{\sqrt{rm}}{\cosh(\pi \kappa)}\ol{\rho}_{\mathfrak{c}}(r,\kappa)\rho_{\mathfrak{b},\mathfrak{c}}(m,\kappa)d\kappa\notag
\end{align}
and
\begin{align}
\Sigma_{-}(r,m)=&\sum_{f\in\mathcal{B}(Q)}\breve{\phi}_-(\kappa_f)\frac{\sqrt{rm}}{\cosh(\pi \kappa_f)}\ol{\rho}_f(r)\rho_f(\mathfrak{b},-m)\notag\\ &+\frac1{4\pi}\sum_{\mathfrak{c}}\int_{-\infty}^{\infty}\breve{\phi}_-(\kappa)\frac{\sqrt{rm}}{\cosh(\pi \kappa)}\ol{\rho}_{\mathfrak{c}}(r,\kappa)\rho_{\mathfrak{b},\mathfrak{c}}(-m,\kappa)d\kappa.\notag
\end{align}
Applying these into \eqref{eqERM}, we rewrite
\[
\mathcal{E}_{\pm}(R,M^*)=\mathcal{E}_{h\pm}(R,M^*)+\mathcal{E}_{m\pm}(R,M^*)+\mathcal{E}_{c\pm}(R,M^*)
\]
to correspond to the holomorphic forms, the Maass forms, and the Eisenstein series. Obviously, there is $\mathcal{E}_{h-}(R,M^*)=0$.

\subsection{Integral transforms}

By integral transforms of Bassel functions as in \cite[Section 9.3]{You11}, we have
\begin{align}
\tilde{\phi}_+(k)=\left(\frac1{2\pi i}\r)^5\int\cdots\int\frac{a^wb^wq^{2s}M^{u_1}N^{u_2}}{r^{2s+u_1+u_2} m^{\frac w2}}\tilde{H}_+(s,u_1,u_2,v,w,k)dvdu_1du_2dwds,\notag
\end{align}
where
\begin{align}
\tilde{H}_+(s,u_1,u_2,v,w,k)=&\frac{\Gamma(\frac{k-1}2+s+u_1+v+\tfrac w2)}{\Gamma(\frac{k+1}2-s-u_1-v-\tfrac w2)}\Gamma(\tfrac12-s-u_1-v)^2\notag\\
&\times \frac{\Gamma(v)\Gamma(\frac12+s+u_2-v-\frac w2)}{\Gamma(\frac12+s+u_2-\frac w2)} \frac{G(s)G(w)}{sw}g(s)\wt{W}(u_1)\wt{W}(u_2)\zeta(1+w)c^*\notag
\end{align}
with $c^*$ being some bounded factors like powers of $2$, $\pi$, etc., which do not affect the convergence of the integral. Also, there exist
\begin{align}
\hat{\phi}_+(\kappa)=\left(\frac1{2\pi i}\r)^5\int\cdots\int\frac{a^wb^wq^{2s}M^{u_1}N^{u_2}}{r^{2s+u_1+u_2} m^{\frac w2}}\hat{H}_+(s,u_1,u_2,v,w,\kappa)dvdu_1du_2dwdsp\notag
\end{align}
with
\begin{align}
\hat{H}_+(s,u_1,u_2,v,w,\kappa)=&\cos(\pi(s+u_1+v+\tfrac w2))\Gamma(s+u_1+v+\tfrac w2+i\kappa)\Gamma(s+u_1+v+\tfrac w2-i\kappa)\notag\\
&\times \Gamma(\tfrac12-s-u_1-v)^2\frac{\Gamma(v)\Gamma(\frac12+s+u_2-v-\frac w2)}{\Gamma(\frac12+s+u_2-\frac w2)}\notag\\
&\times\frac{G(s)G(w)}{sw}g(s)\wt{W}(u_1)\wt{W}(u_2)\zeta(1+w)c^*,\notag
\end{align}
and
\begin{align}\label{eqphi-}
\breve{\phi}_-(\kappa)=\left(\frac1{2\pi i}\r)^5\int\cdots\int\frac{a^wb^wq^{2s}M^{u_1}N^{u_2}}{r^{2s+u_1+u_2} m^{\frac w2}}\cosh(\pi\kappa)\breve{H}_-(s,u_1,u_2,v,w,\kappa)dvdu_1du_2dwds
\end{align}
with
\begin{align}
\breve{H}_-(s,u_1,u_2,v,w,\kappa)=&\Gamma(s+u_1+v+\tfrac w2+i\kappa)\Gamma(s+u_1+v+\tfrac w2-i\kappa)\notag\\
&\times \sin(\pi(s+u_1+v))\Gamma(\tfrac12-s-u_1-v)^2\frac{\Gamma(v)\Gamma(\frac12+s+u_2-v-\frac w2)}{\Gamma(\frac12+s+u_2-\frac w2)}\notag\\
&\times\frac{G(s)G(w)}{sw}g(s)\wt{W}(u_1)\wt{W}(u_2)\zeta(1+w)c^*.\notag
\end{align}

\subsection{The continuous spectrum}
The treatments of $\mathcal{E}_{h+}$, $\mathcal{E}_{m\pm}$, and $\mathcal{E}_{c\pm}$ are based on spectral large sieve inequalities given in Lemma \ref{lemsls}, and follow in the same way with slight variations. The variations are due to their different Fourier coefficients and gamma factors. We show in detail for $\mathcal{E}_{c\pm}$ and point out necessary variations for other cases.

\begin{proposition}\label{proE}
We have
\begin{align}\label{eqproE}
\mathcal{E}_{c\pm}(R,M^*)\ll q^{-\frac12+\ve}\left(\frac NM\r)^{\frac12}R^{-\ve}.
\end{align}
\end{proposition}

\begin{proof}
We only show in detail for $\mathcal{E}_{c-}$ since the case of $\mathcal{E}_{c+}$ is similar.
We recall that
\begin{align}
\mathcal{E}_{c-}(R,M^*)=&\frac{\vp(d)}{\vp^*(q)}
\sum_{\substack{R\le r\le 2R\\ (r,b)=1, d\mid r}}\sum_{\substack{M^*\le m\le2M^*\\ (m,ab)=1}} \frac{e\left(m\frac{\ol{bc}}{a^2}\r)d(m)}{r^{\frac12}m^{\frac12}} \notag\\
&\times\frac1{4\pi}\sum_{\mathfrak{c}}\int_{-\infty}^{\infty}\breve{\phi}_-(\kappa)\frac{\sqrt{rm}}{\cosh(\pi \kappa)}\ol{\rho}_{\mathfrak{c}}(r,\kappa)\rho_{\mathfrak{b},\mathfrak{c}}(-m,\kappa)d\kappa.\notag
\end{align}
Let $\mathcal{E}_{c-}(R,M^*;K)$ be the same expression as $\mathcal{E}_{c-}(R,M^*)$ but with the integral over $\kappa$ restricted to $[K,2K]$,
and then it follows by the integral transform \eqref{eqphi-} that
\begin{align}\label{eqformulaEc-}
\mathcal{E}_{c-}&(R,M^*;K)\ll\frac{\vp(d)}{\vp^*(q)}
\sum_{\substack{R\le r\le 2R\\ (r,b)=1, d\mid r}}\sum_{\substack{M^*\le m\le2M^*\\ (m,ab)=1}} \frac{e\left(m\frac{\ol{bc}}{a^2}\r)d(m)}{r^{\frac12}m^{\frac12}} \sum_{\mathfrak{c}}\int_{K}^{2K}\frac{\sqrt{rm}}{\cosh(\pi \kappa)}\ol{\rho}_{\mathfrak{c}}(r,\kappa)\rho_{\mathfrak{b},\mathfrak{c}}(-m,\kappa)\\
&\ \ \ \times \left(\frac1{2\pi i}\r)^5\int\cdots\int\frac{a^w b^w q^{2s}M^{u_1}N^{u_2}}{r^{2s+u_1+u_2} m^{\frac w2}}\cosh(\pi\kappa)\breve{H}_-(s,u_1,u_2,v,w,\kappa)dvdu_1du_2dwds d\kappa.\notag
\end{align}
The rapid decay of $\breve{H}_-$ allows for truncation of all integrals at height $(qK)^\ve$ with a negligible error except the integral over $\kappa$.

We apply Stirling's approximation to see that
\begin{align}\label{eqboundGamma}
\cosh(\pi\kappa)\Gamma(s+u_1+v+\tfrac w2+i\kappa)\Gamma(s+u_1+v+\tfrac w2-i\kappa)\ll q^\ve K^{-1+2c_s+2c_{u_1}+2c_v+c_w}.
\end{align}
We now come to consider the spectral sum, and
by Cauchy-Schwarz inequality
\begin{align}\label{eqspectralsum}
\text{Spec}&=\sum_{\substack{R\le r\le 2R\\ (r,b)=1, d\mid r}}\sum_{\substack{M^*\le m\le2M^*\\ (m,ab)=1}} \frac{e\left(m\frac{\ol{bc}}{a^2}\r)d(m)}{r^{\frac12+2s+u_1+u_2}m^{\frac12+\frac w2}} \sum_{\mathfrak{c}}\int_{K}^{2K}\frac{\sqrt{rm}}{\cosh(\pi \kappa)}\ol{\rho}_{\mathfrak{c}}(r,\kappa)\rho_{\mathfrak{b},\mathfrak{c}}(-m,\kappa)d\kappa\\
&\ll\Bigg(\sum_{\mathfrak{c}}\int_{K}^{2K}\frac{1}{\cosh(\pi \kappa)}\Bigg|\sum_{\substack{R\le r\le 2R\\ (r,b)=1, d\mid r}}\frac1{r^{\frac12+2s+u_1+u_2}}\sqrt{r}\ol{\rho}_{\mathfrak{c}}(r,\kappa)\Bigg|^2d\kappa\Bigg)^{\frac12}\notag\\
&\ \ \ \ \times\Bigg(\sum_{\mathfrak{c}}\int_{K}^{2K}\frac{1}{\cosh(\pi \kappa)}\Bigg|\sum_{\substack{M^*\le m\le2M^*\\ (m,ab)=1}} \frac{e\left(m\frac{\ol{bc}}{a^2}\r)d(m)}{m^{\frac12+\frac w2}}\sqrt{m}\rho_{\mathfrak{b},\mathfrak{c}}(-m,\kappa)\Bigg|^2\Bigg)^{\frac12}.\notag
\end{align}

In \eqref{eqspectralsum}, we bound the second factor first, which follows directly by the large sieve inequality of Lemma \ref{lemsls} that
\begin{align}
\ll \left(K^2+\frac{M^*}{Q}\r)^{\frac12}\Bigg(\sum_{\substack{M^*\le m\le2M^*\\ (m,ab)=1}}\frac1{m^{1+\ve}}\Bigg)^{\frac12}\ll \left(K+\frac {N^{\frac12}}{M^{\frac12}}\r)b^{\frac12}q^\ve\notag
\end{align}
since $\muup(\mb)=Q^{-1}$, $M^*\ll M^{-1}Na^2b^2q^\ve$, and $Q=a^2bc$.

The treatment of the first factor needs more elaboration. Let $d^{\sharp}$ always denote an integer, which owns the same distinct prime factors as $d$ and meets $d\mid d^{\sharp}$.
With the change $r\rightarrow d^{\sharp}r'$, we can split the $r$ sum into two sums over both $d^{\sharp}$ and $r'$ with $(r',d)=1$.
That is to say,
\[
\sum_{\substack{R\le r\le 2R\\ (r,b)=1, d\mid r}}=\sum_{\substack{d^{\sharp}\le 2R\\ (d^{\sharp},b)=1}}\sum_{\substack{R/d^\sharp\le r'\le 2R/d^\sharp\\ (r',bd)=1}}.
\]
The number of $d^{\sharp}$ is bounded by $O(R^\ve)$ for any given $\ve>0$, which one may prove as follow:
\[
\sum_{d^{\sharp}\le 2R}\ll \sum_{d^{\sharp}}\left(\frac{R^\ve}{d^{\sharp}}\r)^{\ve}\ll R^\ve d^{-\ve}\prod_{p\mid d}(1-p^{-\ve})^{-1}\ll R^\ve.
\]
Then by \eqref{eqErho}, the first factor is at most
\begin{align}\label{eqsumr}
&\sum_{d^{\sharp}\le 2R}\Bigg(\sum_{\mathfrak{c}}\int_{K}^{2K}\frac{1}{\cosh(\pi \kappa)}\Bigg|\sum_{\substack{R\le r=r'd^\sharp\le 2R\\ (r',bd)=1}}\frac1{r^{\frac12+2s+u_1+u_2}}\sqrt{r}\ol{\rho}_{\mathfrak{c}}(r,\kappa)\Bigg|^2d\kappa\Bigg)^{\frac12}\\
&\ll q^\ve\sum_{d^{\sharp}\le 2R}\sum_{d_1\mid (d^\sharp,Q)}\Bigg(\sum_{\mathfrak{c}}\int_{K}^{2K}\frac{1}{\cosh(\pi \kappa)}\Bigg|\sum_{\substack{R/d^\sharp\le r'\le 2R/d^\sharp\\ (r',bd)=1}}\frac1{(r'd^\sharp)^{\frac12+2s+u_1+u_2}}\sqrt{d_1r'}\ol{\rho}_{\mathfrak{c}}(d_1r',\kappa)\Bigg|^2d\kappa\Bigg)^{\frac12}.\notag
\end{align}
 After applying the large sieve inequality of Lemma \ref{lemsls}, we find that it is bounded by
\begin{align}\label{eqrddsharp}
&\ll q^\ve\sum_{d^{\sharp}\le 2R}\sum_{d_1\mid (d^\sharp,Q)}\left(K^2+\frac{d_1R}{d^\sharp Q}\r)^{\frac12}\Bigg(\sum_{\substack{R/d^\sharp\le r'\le 2R/d^\sharp\\ (r',bd)=1}}\frac1{(r'd^\sharp)^{1+4c_s+2c_{u_1}+2c_{u_2}}}\Bigg)^{\frac12}\\
&\ll q^\ve\sum_{d^{\sharp}\le 2R}\sum_{d_1\mid (d^\sharp,Q)}\left(K^2+\frac{d_1R}{d^\sharp Q}\r)^{\frac12}\Bigg(\frac{1}{d^\sharp R^{4c_s+2c_{u_1}+2c_{u_2}}}\Bigg)^{\frac12}\notag\\
&\ll \frac{q^\ve}{R^{2c_s+c_{u_1}+c_{u_2}-\ve}}\Bigg(\frac{K}{d^\frac12}+\frac{R^{\frac12}}{dQ^{\frac12}}\sum_{d_1\mid (d,Q)}d_1^{\frac12}\Bigg).\notag
\end{align}
In the final step, we have replaced $d^\sharp$ by $d$ with its quantity $\ll R^\ve$, observing that $d^\sharp\ge d$ has the same prime factors as $d$.

In conclusion, we bound the spectral sum in \eqref{eqspectralsum} by
\begin{align}\label{eqboundss}
\text{Spec}&\ll \frac{q^\ve}{R^{2c_s+c_{u_1}+c_{u_2}-\ve}}\Bigg(K+\frac {N^{\frac12}}{M^{\frac12}}\Bigg)\Bigg(\frac{K}{d^\frac12}+\frac{R^{\frac12}}{d Q^{\frac12}}\sum_{d_1\mid (d,Q)}d_1^{\frac12}\Bigg)b^{\frac12}\\
&\ll \frac{q^{\frac12+\ve}}{dR^{2c_s+c_{u_1}+c_{u_2}-\ve}}\Bigg(K+\frac {N^{\frac12}}{M^{\frac12}}\Bigg) \Bigg(K+q^{-\frac12}R^{\frac12}\Bigg).\notag
\end{align}
Here we have applied two inequalities $d_1 b\le Q$ and $db\le q$, which follow directly from the facts
$
d_1\mid Q,~b\mid Q,~(d_1,b)=1$,
and
$d\mid q,~ b\mid q,~(d,b)=1$.

We apply \eqref{eqboundGamma} and \eqref{eqboundss} into \eqref{eqformulaEc-} to see
\begin{align}\label{eqER}
\mathcal{E}_{c-}(R,M^*;K)
\ll q^{-\frac12+2c_s+\ve}M^{c_{u_1}}N^{c_{u_2}}K^{-1+2c_s+2c_{u_1}+3\ve}\Bigg(K+\frac {N^{\frac12}}{M^{\frac12}}\Bigg)\left(K+q^{-\frac12}R^{\frac12}\r)R^{-2c_s-c_{u_1}-c_{u_2}+\ve}.
\end{align}
Observing the poles of $\breve{H}_-$, we may bound $\mathcal{E}_{c-}(R,M^*;K)$ with appropriate values of $c_v$, $c_w$, $c_s$, $c_{u_1}$, $c_{u_2}$.

When $R\le K^2q$, a direct calculation of \eqref{eqER} with $c_v=c_w=c_s=\ve$ and $c_{u_1}=-c_{u_2}=\tfrac 12-3\ve$ for $K\ll M^{-\frac12}N^{\frac12}q^\ve$, $c_{u_1}=-c_{u_2}=-\tfrac 12-3\ve$ for $K\gg M^{-\frac12}N^{\frac12}q^\ve$ shows
\begin{align}
\mathcal{E}_{c-}(R,M^*;K)\ll q^{-\frac12+\ve}\left(\frac NM\r)^{\frac12}R^{-\ve}K^{-\ve}.\notag
\end{align}

When $R\ge K^2q$, one notes from \eqref{eqER} with $c_v=c_w=\ve$, $c_s=\tfrac 14+\ve$, and $c_{u_1}=-c_{u_2}=\tfrac 14-3\ve$ for $K\ll M^{-\frac12}N^{\frac12}q^\ve$, $c_{u_1}=-c_{u_2}=-\tfrac 14-3\ve$ for $K\gg M^{-\frac12}N^{\frac12}q^\ve$ that
\begin{align}
\mathcal{E}_{c-}(R,M^*;K)\ll q^{-\frac12+\ve}\left(\frac NM\r)^{\frac14}R^{-\ve}K^{-\ve}.\notag
\end{align}
Then summing over $K$ establishes the proposition.
\end{proof}

\subsection{The other two spectrums}\label{secOTS}
We now come to $\mathcal{E}_{h+}(R,M^*;K)$. Note that
\begin{align}\label{eqHG}
\frac{\Gamma(\frac{k-1}2+s+u_1+v+\tfrac w2)}{\Gamma(\frac{k+1}2-s-u_1-v-\tfrac w2)}\ll K^{-1+2c_s+2c_{u_1}+2c_v+c_w}
\end{align}
for small $s, u_1, v, w$,
so the same treatment as the Eisenstein spectrum gives the same bound for the holomorphic spectrum. We bound the Maass spectrum in the following.
\begin{proposition}\label{proM}
We have
\begin{align}
\mathcal{E}_{m\pm}(R,M^*)\ll q^{-\frac12+\theta+\ve}\left(\frac NM\r)^{\frac12}R^{-\ve}.
\end{align}
\end{proposition}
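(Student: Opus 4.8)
The plan is to run the argument of Proposition~\ref{proE} essentially verbatim, with the modifications forced by the discrete spectrum. First I would split $\mathcal{E}_{m\pm}(R,M^*)$ according to the spectral parameter into dyadic ranges $|\kappa_f|\in[K,2K]$ with $K\ge1$, together with the low-lying piece $|\kappa_f|\le1$; the latter also absorbs the finitely many exceptional eigenvalues $\kappa_f=it_f$, $0<t_f\le\theta$, of $\Gamma_0(Q)$ with $Q=a^2bc$. On each piece I would insert the Kuznetsov expansion of $\Sigma_\pm$ already recorded, substitute the Mellin--Barnes representations of $\hat\phi_+$ and $\breve\phi_-$, truncate all auxiliary contour integrals at height $(qK)^\ve$, and apply Cauchy--Schwarz as in \eqref{eqspectralsum}: the $r$-sum (over $r\equiv0~(\bmod d)$, $(r,b)=1$) is handled by the reduction \eqref{eqhrho} at the cusp $\infty$ followed by the middle (Maass) spectral large-sieve quantity of Lemma~\ref{lemsls} with $\muup(\infty)=Q^{-1}$; the $m$-sum is bounded directly by Lemma~\ref{lemsls} at the cusp $\mb=1/bc$, noting $\muup(\mb)=(bc,a^2)Q^{-1}=Q^{-1}$ since $(a,bc)=1$. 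Using \eqref{eqhrho} in place of \eqref{eqErho} in fact improves the constant.

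For the ranges $K\ge1$ (and the real part of the low-lying piece) nothing changes: Stirling's approximation still gives $\cosh(\pi\kappa)\,\Gamma(s+u_1+v+\tfrac w2+i\kappa)\Gamma(s+u_1+v+\tfrac w2-i\kappa)\ll q^\ve K^{-1+2c_s+2c_{u_1}+2c_v+c_w}$ for real $\kappa\in[K,2K]$, so the same bookkeeping as \eqref{eqER}--\eqref{eqElargeR} with the same contour choices yields $\ll q^{-\frac12+\ve}(N/M)^{\frac12}R^{-\ve}K^{-\ve}$, which sums over $K$ to an acceptable bound; the holomorphic piece $\mathcal{E}_{h+}$ is identical in view of \eqref{eqHG}.

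The genuinely new contribution is that of the exceptional eigenvalues $\kappa_f=it_f$. There $\cosh(\pi\kappa_f)=\cos(\pi t_f)\gg1$ is harmless, but in $\hat H_+$ and $\breve H_-$ the factor $\Gamma(s+u_1+v+\tfrac w2+i\kappa)\Gamma(s+u_1+v+\tfrac w2-i\kappa)$ becomes $\Gamma(s+u_1+v+\tfrac w2-t_f)\Gamma(s+u_1+v+\tfrac w2+t_f)$, equivalently $\breve\phi_-(it_f)$ is tested against $K_{2t_f}$, which is of size $x^{-2t_f}$ near the origin. Keeping this holomorphic and of size $O(q^\ve)$ forces the contours to satisfy $\re(s+u_1+v+\tfrac w2)>\theta$, i.e.\ after fixing $c_v=c_w=\ve$ one must take $c_s+c_{u_1}\ge\theta+\ve$ rather than the weaker $c_s+c_{u_1}\ge-\ve$ available in \eqref{eqcontourE1}. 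Re-optimising the analogues of \eqref{eqEsmallR} and \eqref{eqElargeR} (with $K$ now of bounded size) under this narrower feasible region costs a factor $q^{\theta}$ compared with the continuous-spectrum estimate, so the exceptional eigenvalues contribute $\ll q^{-\frac12+\theta+\ve}(N/M)^{\frac12}R^{-\ve}$; this dominates the $K\ge1$ pieces and gives the proposition.

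The main obstacle is exactly this exceptional spectrum: one must check that the Mellin--Barnes forms of $\hat\phi_+$ and $\breve\phi_-$ stay valid and that all truncated integrals still converge once the contour is pinned to $\re(s+u_1+v+\tfrac w2)>\theta$, verify that the $1/\sin(\pi t_f)$ and $1/\Gamma(1\mp2t_f)$ type factors arising from $\hat\phi_+(it_f)$ and $\breve\phi_-(it_f)$ are uniformly bounded for $t_f\in(0,\theta]$ once the removable singularity at $t_f=0$ is accounted for, and track how the constraint $c_s+c_{u_1}\ge\theta+\ve$ propagates through the two regimes $R\le K^2q$ and $R\ge K^2q$ so as to confirm that $q^{\theta}$ is genuinely the worst loss. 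Everything else is word-for-word the Eisenstein computation of Proposition~\ref{proE}.
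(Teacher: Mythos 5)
There is a genuine gap, and it concerns precisely the step that produces the factor $q^{\theta}$. Your plan handles the $r$-sum over $r\equiv 0\ (\mathrm{mod}\ d)$ by separating the divisor $d^{\sharp}$ from the Fourier coefficients via \eqref{eqhrho} "at the cusp $\infty$", claiming this even improves on \eqref{eqErho}. But \eqref{eqhrho} is the inequality for \emph{holomorphic} forms: its proof (BM15, (5.4)) uses that the underlying newform $f^{*}$ satisfies the Ramanujan--Petersson bound, which is exactly what is not known for Maass forms. For the Maass spectrum the only available substitute is the raw oldform relation \eqref{eqMrho}, in which the Hecke eigenvalues $\lambda_{f^{*}}$ appear explicitly and can only be bounded by Kim--Sarnak, $\lambda_{f^{*}}(n)\ll n^{\theta+\varepsilon}$. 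This is the paper's proof: one replaces \eqref{eqErho} by \eqref{eqMrho} in the analogue of \eqref{eqsumr}, the factor $(d^{\sharp})^{\theta+\varepsilon}\ll q^{\theta+\varepsilon}$ comes out after replacing $d^{\sharp}$ by $d$ as in \eqref{eqrddsharp}, and the rest is verbatim the Eisenstein computation. In particular the loss $q^{\theta}$ afflicts the \emph{entire} Maass spectrum (tempered and exceptional alike), because it is an arithmetic loss in normalizing oldform coefficients, not an analytic one; if \eqref{eqhrho} were usable for Maass forms as you propose, Proposition \ref{proM} — and hence the main theorem — would hold with $\theta$ removed, which is not something this argument can deliver.

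Your second ingredient, attributing $q^{\theta}$ to the exceptional eigenvalues through a forced contour constraint $\mathrm{Re}(s+u_1+v+\tfrac w2)>\theta$, is a misdiagnosis (though harmless as an over-estimate of that piece). Exceptional $\kappa_f=it_f$ with $t_f\le\theta<1$ always fall in the small-$K$ regime $K\ll M^{-\frac12}N^{\frac12}q^{\varepsilon}$, and there the contour choice already used for the continuous spectrum, namely \eqref{eqcontourE1} with $c_s=\varepsilon$, $c_{u_1}=\tfrac12-3\varepsilon$ (and $c_s=\tfrac14+\varepsilon$, $c_{u_1}=\tfrac14-3\varepsilon$ when $R>K^2q$), satisfies $c_s+c_{u_1}+c_v+\tfrac{c_w}2>\theta$; the gamma factors $\Gamma(s+u_1+v+\tfrac w2\pm i\kappa_f)$ stay holomorphic and bounded, $\cosh(\pi\kappa_f)=\cos(\pi t_f)\asymp 1$, and Lemma \ref{lemsls} already includes the exceptional spectrum. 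So no re-optimisation and no $q^{\theta}$ arise from that quarter; the $q^{\theta}$ you need for the tempered part is exactly the Kim--Sarnak cost in \eqref{eqMrho}, which your proposal never pays.
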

\begin{proof}
The proposition follows by the same argument as the Eisenstein spectrum, but with a replacement of \eqref{eqErho} by \eqref{eqMrho} to separate $d^\sharp$ from the Fourier coefficients. We can bound the extra factor $\lambda_{f^*}$ arising from the replacement with the well-known bound
$\lambda_{f^*}(n)\ll n^{\theta+\ve}$.
More precisely, the following estimate due to \eqref{eqMrho} is applied in the analog of \eqref{eqsumr} to replace \eqref{eqErho}
\[
\sum_{\substack{R\le r\le 2R\\ r=r'd^\sharp\\ (r',bd)=1}}\frac1{r^{\frac12+2s+u_1+u_2}}\sqrt{r}\ol{\rho}_f(r)\ll {d^{\sharp}}^{\theta+\ve}\sum_{d_1\mid (d^{\sharp},Q)}\Bigg|\sum_{\substack{R/d^\sharp\le r'\le 2R/d^\sharp\\ (r',bd)=1}}\frac1{(r'd^\sharp)^{\frac12+2s+u_1+u_2}}\sqrt{d_1r'}\rho_f(d_1r')\Bigg|.
\]
Sine $\theta=\frac7{64}<\frac12$, we can also replace $d^\sharp$ by $d$ in the analog of \eqref{eqrddsharp}. Then the extra factor ${d^\sharp}^{\theta+\ve}$ contributes at most a factor $q^{\theta+\ve}$ to $\mathcal{E}_{m\pm}(R,M^*)$ at last.
\end{proof}

At last, one obtains the bound in Theorem \ref{thmEMN} by summing over $R$ and $M^*$ for these bounds of the holomorphic spectrum, the Maass spectrum, and the Eisenstein spectrum.

\section{Off-diagonal terms with $M$ and $N$ far away}\label{secfar}
This section is devoted to bounding $B_{M,N}$ and $B_{\ol{M,N}}$ with $M, N$ far away from each other. We lay focus on $B_{M,N}$ since the case for $B_{\ol{M,N}}$ is identical.
For notational convenience, we set all shifts to zero as before, and the arguments extend easily to nonzero parameter values.

Let $\eta=\frac1{14}-\frac 3{7}\theta$. We now set
\[
M=q^{\mu},\ \ \ \ N=q^{\nu},
\]
so by \eqref{eqEMN} and \eqref{eqtrivalbound}, proving Theorem \ref{thmmains} is to show
\begin{align}
B_{M,N}\ll q^{-\eta+\ve}\notag
\end{align}
for
\begin{align}\label{eq1munu}
2-2\eta\le\mu+\nu\le 2,\ \ \ \ \ \  1-2\theta-2\eta\le\nu-\mu.
\end{align}

We recall that
\begin{align}
B_{M,N}=\frac1{\vp^*(q)}\sum_{d\mid q}\vp(d)\mu\left(\frac qd\r)\mathop{\sum\sum}_{\substack{m\equiv n(\bmod d)\\ (mn,q)=1}}\frac{d(m)d(n)}{m^{\frac12}n^{\frac12}}V\left(\frac{mn}{q^2}\r) W\left(\frac mM\r)W\left(\frac nN\r),\notag
\end{align}
which after the definition of $V$ being applied evolves into
\begin{align}
B_{M,N}=&\frac1{2\pi i}\int_{(\ve)}\left(\frac{q^2}{MN}\r)^{s}\frac{G(s)}{s}g(s)\notag\\
&\times\frac1{\vp^*(q)\sqrt{MN}}\sum_{d\mid q}\vp(d)\mu\left(\frac qd\r)\mathop{\sum\sum}_{\substack{m\equiv n(\bmod d)\\ (mn,q)=1}}d(m)d(n) W_s\left(\frac mM\r)W_s\left(\frac nN\r),\notag
\end{align}
where $W_s(x)=x^{\frac12+s}W(x)$. Since $G(s)g(s)$ decays rapidly in the imaginary direction, we omit the effect of $s$ to get
\begin{align}
B_{M,N}\ll\frac1{\vp^*(q)\sqrt{MN}}\sum_{d\mid q}\vp(d)\mu\left(\frac qd\r)\mathop{\sum\sum}_{\substack{m\equiv n(\bmod d)\\ (mn,q)=1}}d(m)d(n) W\left(\frac mM\r)W\left(\frac nN\r).\notag
\end{align}

We write $d(n)=\sum\limits_{n_1n_2=n}1$ and apply the dyadic partition of unity to both $n_1$ and $n_2$ with $n_1\as N_1$, $n_2\as N_2$, and $N_1N_2\as N$. Without loss of generality, we assume $N_2\ge N_1$. Now we have reduced the problem to bounding $B_{M,N_1,N_2}$, defined as
\begin{align}
B_{M,N_1,N_2}=&\frac1{\vp^*(q)\sqrt{MN}}\sum_{d\mid q}\vp(d)\mu\left(\frac qd\r)\mathop{\sum\sum\sum}_{\substack{m\equiv n_1n_2(\bmod d)\\ (mn_1n_2,q)=1}}d(m) W\left(\frac mM\r)W\left(\frac {n_1}{N_1}\r)W\left(\frac {n_2}{N_2}\r)W\left(\frac {n_1n_2}{N_1N_2}\r).\notag
\end{align}
We set
\[
N_1=q^{\nu_1},\ \ \ \ \ \ N_2=q^{\nu_2},
\]
so that
\begin{align}\label{eqnu1nu2u}
\nu=\nu_1+\nu_2,\ \ \ \ \ \nu_1\le \nu_2.
\end{align}

After applying Mellin transform to $W\left(\frac{n_1n_2}{N_1N_2}\r)$, one may remove its effect as $V$. Then it follows that
\begin{align}
B_{M,N_1,N_2}\ll\frac1{\sqrt{MN}}\sum_{(m,q)=1}d(m)\Delta_{N_1,N_2}(m)W\left(\frac mM\r)\notag
\end{align}
with
\begin{align}
\Delta_{N_1,N_2}(m)=\frac1{\vp^*(q)}\sum_{d\mid q}\vp(d)\mu\left(\frac qd\r) \sum_{(n_1,q)=1}\sum_{\substack{n_2\equiv m\ol{n}_1(\bmod d)\\ (n_2,q)=1}}W\left(\frac {n_1}{N_1}\r) W\left(\frac {n_2}{N_2}\r).\notag
\end{align}
It is obvious that
\begin{align}
\begin{array}{c}
n_2\equiv m\ol{n}_1 \pmod d\\
(n_2,q)=1
\end{array}\Longleftrightarrow
\begin{array}{c}
n_2\equiv m\ol{n}_1 \pmod d\\
(n_2,q_d)=1
\end{array}.\notag
\end{align}
After applying this, we write $(n_2,q_d)=1$ in terms of M\"{o}bius function to get
\begin{align}
\Delta_{N_1,N_2}(m)=&\frac1{\vp^*(q)}\sum_{d\mid q}\vp(d)\mu\left(\frac qd\r) \sum_{a\mid q_d}\mu(a)\sum_{(n_1,q)=1} \sum_{\substack{n_2\equiv m\ol{n}_1(\bmod d)\\ n_2\equiv0(\bmod a)}}  W\left(\frac {n_1}{N_1}\r)W\left(\frac {n_2}{N_2}\r).\notag
\end{align}
Applying Poisson summation formula to $n_2$, we have
\begin{align}
\Delta_{N_1,N_2}(m)=&\frac{N_2}{\vp^*(q)}\sum_{d\mid q}\frac{\vp(d)}{d}\mu\left(\frac qd\r) \sum_{a\mid q_d}\frac{\mu(a)}{a}\sum_{(n_1,q)=1}\sum_{h} e\left(\frac{hm\ol{a}\ol{n}_1}{d}\r) W\left(\frac {n_1}{N_1}\r)\wh{W}\left(\frac {h}{ad/N_2}\r),\notag
\end{align}
where $\wh{W}$ is the Fourier transform of $W$.

We first consider the term $h=0$ in $\Delta_{N_1,N_2}(m)$, which is equal to
\begin{align}
\frac{N_2}{\vp^*(q)}\wh{W}(0)\sum_{(n_1,q)=1}W\left(\frac{n_1}{N_1}\r)\sum_{d\mid q}\frac{\vp(d)}{d}\mu\left(\frac qd\r)\sum_{a\mid q_d}\frac{\mu(a)}{a},\notag
\end{align}
where the sums over $d$ and $a$ cancel adequately.
To see this, we alter the order of the summation, then
\begin{align}
\sum_{d\mid q}\frac{\vp(d)}{d}\mu\left(\frac qd\r)\sum_{a\mid q_d}\frac{\mu(a)}{a}=\sum_{a\mid q}\frac{\mu(a)}{a}\sum_{d\mid q_a}\frac{\vp(d)}{d}\mu\left(\frac qd\r).\notag
\end{align}
For the right-hand side, if there is a prime $p$ with $p\mid a$ and $p^2\mid q$, it vanishes as $\mu(q/d)=0$. If not the case, there is $q_a=\frac qa$, and it follows that
\[
\sum_{d\mid q_a}\frac{\vp(d)}{d}\mu\left(\frac qd\r)=\mu(a)\sum_{d\mid \frac q{a}}\frac{\vp(d)}{d}\mu\left(\frac q{ad}\r)=a\frac{\mu(q)}q
\]
 by Lemma \ref{lemsumd}.
Applying this into the sum over $a$, we find that it vanishes too.

Now we come to the contribution of the terms $h\neq0$. We have
\begin{align}
B_{M,N_1,N_2}\ll\sum_{d\mid q}\frac{\vp(d)}d\mu^2\left(\frac qd\r) \sum_{a\mid q_d}\mu^2(a)|R(d,a)|,\notag
\end{align}
where
\begin{align}\label{eqR}
R(d,a)=\frac{N_2}{a\vp^*(q)\sqrt{MN}}\sum_{(m,q)=1}d(m)W\left(\frac mM\r)\sum_{(n_1,q)=1}\sum_{h\neq0} e\left(\frac{hm\ol{a}\ol{n}_1}{d}\r) W\left(\frac {n_1}{N_1}\r)\wh{W}\left(\frac {h}{H}\r)
\end{align}
with
\[
H=adN_2^{-1}\ll qN_2^{-1}.
\]
Due to the rapid decay of the Fourier transform $\wh{W}$, contribution from such terms with $|h|\ge Hq^{\ve}$ is negligible. So we assume that the $h$-sum is over $h\le Hq^\ve$ with $Hq^\ve=adq^\ve N_2^{-1}\gg 1$, in particular,
\begin{align}\label{eqnu2}
\nu_2\le 1+\ve.
\end{align}
Now the problem reduces to showing
\begin{align}\label{eqmainRd0}
R(d,a)\ll q^{-\eta+\ve}.
\end{align}
Note that the $\ve$ here allows our neglect of other $\ve$ in following calculations.

Before further evolutions of $R(d,a)$, we first recall the region of the exponents here. We sum up from \eqref{eq1munu}, \eqref{eqnu1nu2u}, and \eqref{eqnu2} that
\begin{align}\label{eqregion}
&2-2\eta\le \nu+\mu\le 2,\ \ \ \ \ \ \ \ \ \ \ \ 1-2\theta-2\eta\le\nu-\mu,\\
&\nu=\nu_1+\nu_2,\ \ \ \ \ \ \ \ \ \ \ \ \ \ \ \ \ \ \ \ \ \ \ \ \ \tfrac12-\theta-2\eta\le\nu_1\le \nu_2\le 1,\notag
\end{align}
where $\frac12-\theta-2\eta$ in the last inequality is a direct result of other three inequalities.
We split the region into several parts, according to
\begin{enumerate}
  \item $\nu-\mu\ge 1+2\eta$;
  \item $1-2\theta-2\eta\le\nu-\mu<1+2\eta$,
  \begin{itemize}
           \item $\frac12-\theta-2\eta< \nu_1< \frac12+2\eta$;
           \item $\nu_1\ge \frac12+2\eta$.
         \end{itemize}
\end{enumerate}

\subsection{Estimate for $\nu-\mu$ large}
In this section, we prove \eqref{eqmainRd0} for $\nu-\mu\ge 1+2\eta$, which turns out to be a direct result of the Weil bound.
Applying the Weil bound to the sum over $n_1$ and bounding other sums trivially, we have
\begin{align}
R(d,a)\ll \frac{N_2H}{aq^{1+\ve}}\left(\frac NM\r)^{-\frac12}\left(d^{\frac12+\ve}+N_1d^{-1}\r),\notag
\end{align}
which after a simple calculation with $H=adN_2^{-1}$ becomes
\begin{align}
R(d,a)\ll q^{\frac12+\ve}\left(\frac NM\r)^{-\frac12}+\left(\frac NM\r)^{-\frac12}\ll q^{-\eta+\ve}\notag
\end{align}
since $N_1,d\le q^{1+\ve}$.

\subsection{Estimate for $\nu-\mu$ close to $1$}
This section is devoted to proving \eqref{eqmainRd0} for $1-2\theta-2\eta\le\nu-\mu<1+2\eta$.
We combine $m$ with $h$ to one longer variable $l=mh$. Then a trivial estimate shows
\begin{align}\label{eqRd0-DS}
R(d,a)\ll\frac{N_2q^\ve}{aq\sqrt{MN}}\sum_{l\le L}\left|\sum_{(n_1,q)=1} e\left(\frac{\ol{a}\ol{n}_1l}{d}\r) W\left(\frac {n_1}{N_1}\r)\r|
\end{align}
with
\[
 L=MHq^\ve\ll \frac{adM}{N_2}q^\ve.
\]
To bound these double sums, we apply the following lemma; see also \cite[Theorem 2.4]{KSWX22}.

\begin{lemma}\label{lemDS}
Let $q$ be a positive integer and $(\alpha_k)$ be a sequence of complex numbers satisfying $\alpha_k\ll k^\ve$. For any positive integers $L,K\le q$, we have
\begin{align}
\sum_{l\le L}\Bigg|\sum_{\substack{k\le K\\ (k,q)=1}} \alpha_k e\left(\frac{al\ol{k}}{q}\r) \Bigg|\ll LKq^\ve\cdot\Delta(L,K,q)\notag
\end{align}
uniformly in $a$ with $(a,q)=1$, where we may take the saving $\Delta(L,K,q)$ freely among
\begin{subequations}\begin{align}
&L^{-\frac{1}2}K^{-\frac{1}4}q^{\frac{1}4}+K^{-\frac{1}2},\label{eqDS1}\\
&L^{-\frac12}K^{-1}q^{\frac34}+L^{-\frac{1}2}+K^{-\frac12}.\label{eqDS2}
\end{align}
\end{subequations}
\end{lemma}
Note in \eqref{eqRd0-DS} that $L, N_1>d$ is possible, and we should extend Lemma \ref{lemDS} for all positive integers $L, K$. Since $L, K$ in \eqref{eqDS1} and \eqref{eqDS2} take value at most $q$, we may take the saving $\Delta(L,K,q)$ freely among
\begin{subequations}\begin{align}
&L^{-\frac{1}2}K^{-\frac{1}4}q^{\frac{1}4}+L^{-\frac{1}2}+q^{-\frac{1}2}+K^{-\frac{1}2},\label{eqDS3}\\
&L^{-\frac12}K^{-1}q^{\frac34}+K^{-1}q^{\frac14}+L^{-\frac{1}2}+q^{-\frac12}+K^{-\frac{1}2}\label{eqDS4}
\end{align}
\end{subequations}
instead.

{\bf Case I:}$1-2\theta-2\eta\le\nu-\mu<1+2\eta$ and $\frac12-\theta-2\eta< \nu_1< \frac12+2\eta$.

In this situation, we apply Lemma \ref{lemDS} with \eqref{eqDS3}, and after a simplification we have
\[
R(d,a)\ll\frac{N_2q^\ve}{aq\sqrt{MN}}\left(L^{\frac12}N_1^{\frac34}d^{\frac14}+L^{\frac12}N_1+L N_1 d^{-\frac12}+L N_1^{\frac12}\r),
\]
provided $N_1>q^{\frac13}$.
Recalling
\[
L\ll \frac{adM}{N_2}q^\ve,\ \ \ \ d\le q,\ \ \ \ N_1\le q,\ \ \ \  N_1N_2=N,
\]
we have
\begin{align*}
R(d,a)&\ll q^{-\frac14+\ve}N_1^{\frac14}+q^\ve\left(\frac NM\r)^{-\frac12}N_1^{\frac12}\ll  q^{-\frac14+\frac14\nu_1+\ve}+q^{-\frac12(\nu-\mu)+\frac12\nu_1+\ve}.
\end{align*}
With $\nu-\mu>1-2\theta-2\eta$ and $\nu_1< \frac12+2\eta$, it is easy to see
\[
-\tfrac14+\tfrac14\nu_1\le-\tfrac18+\tfrac12\eta\le -\eta \quad \text{for} \quad \eta\le \tfrac1{12},
\]
\[
-\tfrac12(\nu-\mu)+\tfrac12\nu_1\le -\tfrac14+\theta+2\eta\le -\eta \quad \text{for} \quad \eta\le \tfrac1{12}-\tfrac13\theta,
\]
and this establishes  \eqref{eqmainRd0}.

{\bf Case II:}$1-2\theta-2\eta\le\nu-\mu<1+2\eta$ and $\frac12+2\eta\le \nu_1$.

In this case, an easy calculation shows
\[
\tfrac12+2\eta\le \nu_1\le \tfrac12+\tfrac14(\nu-\mu)\le\tfrac34+\tfrac12\eta.
\]
Applying Lemma \ref{lemDS} with \eqref{eqDS4} into \eqref{eqRd0-DS}  gives
\[
R(d,a)\ll\frac{N_2q^\ve}{aq\sqrt{MN}}\left(L^{\frac12}d^{\frac34}+Ld^{\frac14}+L^{\frac12}N_1+LN_1d^{-\frac12}+LN_1^{\frac12}\r).
\]
By an easy calculation with
\[
L\ll \frac{adM}{N_2}q^\ve,\ \ \ \ d\le q,\ \ \ \ q^{\frac12}< N_1\le q,\ \ \ \  N_1N_2=N,
\]
it follows that
\begin{align*}
R(d,a)&\ll q^{\frac14+\ve}N_1^{-\frac12}+q^{-\frac12+\ve}N_1^{\frac12}+q^\ve\left(\frac NM\r)^{-\frac12}N_1^{\frac12}\\
&\ll q^{\frac14-\frac12\nu_1+\ve}+q^{-\frac12+\frac12\nu_1+\ve}+q^{-\frac12(\nu-\mu)+\frac12\nu_1+\ve}.
\end{align*}
Applying $\nu-\mu\ge 1-2\theta-2\eta$ and $\tfrac12+2\eta\le \nu_1\le \tfrac12+\tfrac14(\nu-\mu)\le\tfrac34+\tfrac12\eta$, we have
\[
\tfrac14-\tfrac12\nu_1\le \tfrac14-\tfrac12\times(\tfrac12+2\eta)= -\eta,
\]
\[
-\tfrac12+\tfrac12\nu_1\le -\tfrac18+\tfrac14\eta\le -\eta  \quad \text{for} \quad \eta\le \tfrac1{10},
\]
\[
-\tfrac12(\nu-\mu)+\tfrac12\nu_1\le -\tfrac38(\nu-\mu)+\tfrac14\le -\tfrac18+\tfrac34\theta+\tfrac34\eta\le -\eta \quad \text{for} \quad \eta\le \tfrac1{14}-\tfrac37\theta,
\]
and the result follows.

\noindent
\section*{Acknowledgments}
The author would like to take the opportunity to thank Brian Conrey, Roger Heath-Brown, Henryk Iwaniec, and Matthew Young for their encouragement on this project. The author also wants to thank Lilu Zhao for helpful discussions. This work is supported in part by the National Natural Science Foundation of China (Grant No. 11871187) and the Fundamental Research Funds for the Central Universities of China.

\end{document}